\theoremstyle{plain}\newtheorem{Theorem}{Theorem}[section]
\theoremstyle{plain}
\theoremstyle{plain}\newtheorem{Lemma}[Theorem]{Lemma}
\theoremstyle{plain}\newtheorem{Definition}[Theorem]{Definition}
\theoremstyle{plain}\newtheorem{Proposition}[Theorem]{Proposition}
\theoremstyle{plain}
\theoremstyle{plain}\newtheorem{Question}[Theorem]{Question}
\theoremstyle{plain}\newtheorem*{Theorem*}{Theorem}
\newtheorem*{rep@theorem}{\rep@title}
\newcommand{\newreptheorem}[2]{%
\newenvironment{rep#1}[1]{%
 \def\rep@title{#2 \ref{##1}}%
 \begin{rep@theorem}}%
 {\end{rep@theorem}}}
\theoremstyle{plain}\newreptheorem{theorem}{Theorem}
\theoremstyle{remark}\newtheorem{remark}[Theorem]{Remark}
\numberwithin{equation}{section}
\DeclareMathOperator{\rank}{rank}
\DeclareMathOperator{\II}{I}
\DeclareMathOperator{\AHI}{AHI}
\DeclareMathOperator{\AKh}{AKh}
\DeclareMathOperator{\muu}{\mu^{orb}}
\DeclareMathOperator{\SU}{SU}
\DeclareMathOperator{\pt}{pt}
\DeclareMathOperator{\id}{id}
\DeclareMathOperator{\CKh}{CKh}
\DeclareMathOperator{\SiHI}{\Sigma HI}
\DeclareMathOperator{\SiKh}{\Sigma Kh}
\newcommand{\bC}{\mathbb{C}}
\newcommand{\bZ}{\mathbb{Z}}
\newcommand{\bfu}{{\bf u}}
\newcommand{\bfv}{{\bf v}}
\newcommand{\bfw}{{\bf w}}
\author{Yi Xie, Boyu Zhang}
\title{Instantons and Khovanov skein homology on $I\times T^2$}
\begin{document}

\begin{abstract}
Asaeda, Przytycki and Sikora \cite{APS} defined a generalization of Khovanov homology for links in $I$--bundles over  compact surfaces. We prove that for a link $L\subset (-1,1)\times T^2$, the Asaeda-Przytycki-Sikora homology of $L$ has rank $2$ with $\bZ/2$--coefficients if and only if $L$ is isotopic to an embedded knot in $\{0\}\times T^2$.
\end{abstract}

\maketitle

\setcounter{tocdepth}{1}

\section{Introduction}
\label{sec_intro} 

 Khovanov \cite{Kh-Jones} defined bi-graded homology groups for  links in $S^3$, which are categorifications of the Jones polynomial. Kronheimer and Mrowka \cite{KM:Kh-unknot} proved that Khovanov homology detects the unknot. A generalization of Khovanov homology was introduced by Asaeda-Przytycki-Sikora \cite{APS} for links in $I$--bundles over compact surfaces (possibly with boundary). The Asaeda-Przytycki-Sikora homology is a categorification of the Kauffman bracket skein module, and we will call it the \emph{Khovanov skein homology}.

In the case that the compact surface is an annulus and the $I$--bundle is trivial, the Khovanov skein homology is also called the \emph{annular Khovanov homology}. In \cite{XZ:excision}, the authors proved that the annular Khovanov homology detects the trivial links and the closures of trivial braids. The current paper gives a detection result for Khovanov skein homology when the surface is a torus. We prove that for a link $L\subset (-1,1)\times T^2$, the Khovanov skein homology of $L$ with $\bZ/2$--coefficients has rank $2$ if and only if $L$ can be be isotoped to a knot embedded in $\{0\}\times T^2$.

We briefly review the Khovanov skein homology defined by \cite{APS} for oriented framed links in $(-1,1)\times T^2$. The Khovanov skein homology for links in $(-1,1)\times T^2$ is graded over $\bZ^2\oplus \bZ\mathcal{C}$, where $\mathcal{C}$ is the set of isotopy classes of un-oriented essential simple closed curves on $T^2$. The gradings over the $\bZ^2$--factor are usually called the homological grading and the quantum grading. We will ignore these two gradings in our discussion because they will not be used in this paper. The Khovanov skein homology as a $\bZ\mathcal{C}$--graded group does not depend on the orientation or the framing of $L$, therefore our discussion below will not make reference to the orientation or the framing.

Suppose $L$ is given by a diagram $D$ on $T^2$ with $d$ crossings, and fix an order of the crossings.
For $v=(v_1,v_2,\cdots, v_d)\in \{0,1\}^d$, resolving the crossings of $D$ by a sequence of 0-smoothings and 1-smoothings (see Figure \ref{01smoothing}) given by $v$ changes $D$ to a collection of circles on $T^2$.
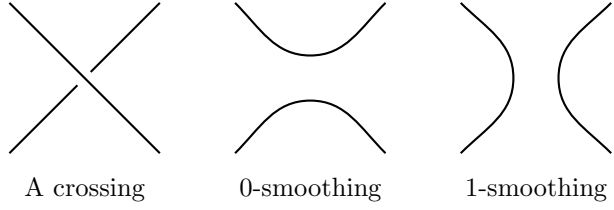
\begin{figure}
\begin{tikzpicture}
\draw[thick] (1,-1) to (-1,1); \draw[thick,dash pattern=on 1.3cm off 0.25cm] (1,1) to (-1,-1);  \node[below] at (0,-1.2) {A crossing};

\draw[thick] (2,1)  to [out=315,in=180]  (3,0.3) to [out=0,in=225]   (4,1);
\draw[thick] (2,-1)  to [out=45,in=180]  (3,-0.3) to [out=0,in=135]   (4,-1);  \node[below] at (3,-1.2) {0-smoothing};

\draw[thick] (5,1)  to [out=315,in=90]  (5.7,0) to [out=270,in=45]    (5,-1);  \node[below] at (6,-1.2) {1-smoothing};
\draw[thick] (7,1)  to [out=225,in=90]  (6.3,0) to [out=270,in=135]   (7,-1);
\end{tikzpicture}
\caption{Two types of smoothings}\label{01smoothing}
\end{figure}
There are two types of circles in the resolved diagram: circles that bound disks in $T^2$ and circles that are essential. We will call them the trivial circles and the essential circles respectively.

If $\gamma$ is a circle on $T^2$, we define a rank-2 
 free abelian group $V(\gamma):=\mathbb{Z}\{\mathbf{v}(\gamma)_+,\mathbf{v}(\gamma)_-\}$. The group $V(\gamma)$ is graded over $\bZ\mathcal{C}$ as follows. If $\gamma$ is a trivial circle, then $\deg \mathbf{v}(\gamma)_\pm=0$; if $\gamma$ is an essential circle, then $\deg \mathbf{v}(\gamma)_\pm= \pm[\gamma]$, where $[\gamma]\in \mathcal{C}$ is the isotopy class of $\gamma$. 
Given $v\in \{0,1\}^d$, let $S_v$ be the set of circles
in the corresponding resolved diagram, define
\begin{equation*}
  CKh_v(L)= \bigotimes_{\gamma\in S_v} V(\gamma).
\end{equation*}

If we change a 0-smoothing to a 1-smoothing, then the resolved diagram is changed by merging two circles into one circle, or splitting one circle into two circles.
Let $u\in \{0,1\}^d$ be the label of the new resolution, we define a map from $CKh_v(L)$ to $CKh_u(L)$ as follows. It is sufficient to specify the maps between the abelian groups associated to the
circles involved in the merging or splitting. In the case that two circles are merged into one, let $\gamma_1$ and $\gamma_2$ be the two circles before the merging and let $\gamma$ be the circle after the merging. The merge map is commutative on $\gamma_1$ and $\gamma_2$. If $\gamma_1$ and $\gamma_2$ are both trivial circles, the map is given by
\begin{align*}
  \mathbf{v}(\gamma_1)_+ \otimes \mathbf{v}(\gamma_2)_+ &\mapsto \mathbf{v}(\gamma)_+,  &\mathbf{v}(\gamma_1)_+& \otimes \mathbf{v}(\gamma_2)_- \mapsto \mathbf{v}(\gamma)_- ,\\
  \mathbf{v}(\gamma_1)_- \otimes \mathbf{v}(\gamma_2)_+ &\mapsto \mathbf{v}(\gamma)_-,  &\mathbf{v}(\gamma_1)_-& \otimes \mathbf{v}(\gamma_2)_- \mapsto 0.
\end{align*}
If $\gamma_1$  is trivial and $\gamma_2$ is essential, the map is given by
\begin{align*}
  \mathbf{v}(\gamma_1)_+\otimes \mathbf{v}(\gamma_2)_+ &\mapsto \mathbf{v}(\gamma)_+, &\mathbf{v}(\gamma_1)_+&\otimes \mathbf{v}(\gamma_2)_- \mapsto \mathbf{v}(\gamma)_- ,\\
  \mathbf{v}(\gamma_1)_-\otimes \mathbf{v}(\gamma_2)_+ &\mapsto 0,           &\mathbf{v}(\gamma_1)_-&\otimes \mathbf{v}(\gamma_2)_- \mapsto 0.
\end{align*}
If both $\gamma_1$ and $\gamma_2$ are essential, since they are disjoint circles on $T^2$, we have $\gamma_1$ and $\gamma_2$ are isotopic to each other, and $\gamma_3$ is a trivial circle. The merge map is given by 
\begin{align*}
  \mathbf{v}(\gamma_1)_+\otimes \mathbf{v}(\gamma_2)_+ &\mapsto 0,   &\mathbf{v}(\gamma_1)_+& \otimes \mathbf{v}(\gamma_2)_- \mapsto \mathbf{v}(\gamma)_-,\\
  \mathbf{v}(\gamma_1)_-\otimes \mathbf{v}(\gamma_2)_+ &\mapsto \mathbf{v}(\gamma)_-, &\mathbf{v}(\gamma_1)_-&\otimes \mathbf{v}(\gamma_2)_- \mapsto 0.
\end{align*}
In the case that one circle splits into two circles, let $\gamma$ be the circle that splits into $\gamma_1$ and $\gamma_2$. If $\gamma_1$ and $\gamma_2$ are both trivial circles, the map is given by
$$
\mathbf{v}(\gamma)_+ \mapsto \mathbf{v}(\gamma_1)_+\otimes \mathbf{v}(\gamma_2)_- + \mathbf{v}(\gamma_1)_-\otimes \mathbf{v}(\gamma_2)_+, \quad \mathbf{v}(\gamma)_- \mapsto \mathbf{v}(\gamma_1)_-\otimes \mathbf{v}(\gamma_2)_-.
$$
If $\gamma_1$  is trivial and $\gamma_2$ is essential, the map is given by
$$
\mathbf{v}(\gamma)_+ \mapsto \mathbf{v}(\gamma_1)_-\otimes \mathbf{v}(\gamma_2)_+, \quad \mathbf{v}(\gamma)_- \mapsto \mathbf{v}(\gamma_1)_-\otimes \mathbf{v}(\gamma_2)_-.
$$
If both $\gamma_1$ and $\gamma_2$ are essential, the map is given by
$$
\mathbf{v}(\gamma)_+ \mapsto \mathbf{v}(\gamma_1)_+\otimes \mathbf{v}(\gamma_2)_- + \mathbf{v}(\gamma_1)_-\otimes \mathbf{v}(\gamma_2)_+ , \quad \mathbf{v}(\gamma)_- \mapsto 0.
$$

Hence we have a map $d_{vu}: \CKh_v(L) \to \CKh_u(L)$ whenever $u$ is obtained from $v$ by changing one coordinate from 0 to 1. It is straightforward to verify that $d_{vu}$ preserves the gradings over $\bZ\mathcal{C}$. Let $e_i$ be the $i$--th standard basis vector of $\mathbb{R}^c$.
Define
\begin{equation*}
  \CKh(L):=\bigoplus_{v\in \{0,1\}} \CKh_v(L),
\end{equation*}
and define an endomorphisms of $\CKh(L)$ by
\begin{equation*}
  D:= \sum_i \sum_{u-v=e_i}  (-1)^{\sum_{i<j\le c}v_j}  d_{vu}.
\end{equation*}
\begin{Theorem}[\cite{APS}]
The map $D$ satisfies $D^2=0$, and the homology of $(\CKh(L),D)$ as a $\bZ\mathcal{C}$--graded group does not depend on the diagram $D$ or the order of the crossings.
\end{Theorem}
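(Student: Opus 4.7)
The plan is to follow the Khovanov cube-of-resolutions template, with extra care devoted to the non-standard merge and split rules involving essential circles. I would first prove $D^2=0$ by expanding $D^2$ and grouping its terms into the $2$-faces of the cube of resolutions indexed by unordered pairs of crossings. The Khovanov sign convention $(-1)^{\sum_{i<j}v_j}$ is arranged so that, on each $2$-face, the two compositions from the $00$-corner to the $11$-corner appear with opposite signs, so $D^2=0$ reduces to showing that every such face commutes. Since the resolutions are identical away from the two chosen crossings, the commutation is a local statement involving at most four circles at each corner, and I would verify it by case analysis on the topological types (trivial vs.\ essential) of the circles appearing in the four resolutions of the face. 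Two disjoint essential circles on $T^2$ are automatically isotopic, which keeps the number of distinct types small.

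Second, I would prove independence of the ordering of the crossings by observing that a transposition of two consecutive crossings permutes the summands $\CKh_v(L)$ and changes the signs in $D$ in a controlled way; these signs assemble into an explicit chain isomorphism. For invariance under the choice of diagram, it suffices to check invariance under the three Reidemeister moves performed inside a disk $D^2\subset T^2$. In each case I would construct an explicit chain-homotopy equivalence following Bar-Natan's local picture: deformation retracts that cancel the newly created crossing for R1, a cancelling acyclic subcomplex for R2, and the diagonal homotopy for R3. Essential circles in the complement of the supporting disk act as inert bystanders, because the merge and split maps depend only on the topological types of the circles directly involved in a smoothing; hence the standard local chain homotopies adapt verbatim.

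The main obstacle I anticipate is the combinatorial bookkeeping in the face-commutativity check when essential circles participate. The essential-essential merge (producing a trivial circle) and the essential-essential split (producing two parallel essentials) do \emph{not} come from a Frobenius algebra structure on $V(\gamma)$, so the usual TQFT commutation lemmas are unavailable, and every $2$-face that mixes a trivial-type smoothing with an essential-type smoothing must be verified by hand. This is a finite but delicate enumeration, and it is the step where the proof would most plausibly break if the merge/split formulas in the excerpt were chosen incorrectly. Once face-commutativity is established, the remaining arguments are formal consequences of the cube-of-resolutions formalism.
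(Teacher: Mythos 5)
This theorem is not proved in the paper at all: it is quoted from Asaeda--Przytycki--Sikora \cite{APS}, so there is no internal proof to compare against. Your outline is essentially the argument of \cite{APS} itself (which in turn follows Khovanov's cube-of-resolutions template): sign conventions reduce $D^2=0$ to commutativity of the $2$-faces, which is checked by a finite case analysis on the trivial/essential types of the circles involved; ordering independence is the standard permutation-of-summands isomorphism; and diagram independence is checked on Reidemeister moves (together with surface isotopies, which act trivially on the combinatorial data and the $\bZ\mathcal{C}$-grading) via local chain homotopy equivalences.

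One caveat: your claim that the standard local homotopies for R1--R3 adapt ``verbatim'' is slightly too quick, for the same reason you flag in the face-commutativity step. The strands entering the supporting disk may belong to essential circles, so the circle being merged with or split off from the small trivial circle created by the move can be of either type, and the Gaussian-elimination-style cancellations must be re-verified in both cases. They do go through: from the merge/split rules, multiplication by $\mathbf{v}_+$ of a trivial circle is the identity whether the other circle is trivial or essential, and the component of the split map landing in the $\mathbf{v}_-$-summand of the new trivial circle is likewise an isomorphism in both cases; but this is a genuine (if routine) extra check, not a verbatim transcription, and it is exactly the kind of verification carried out in \cite{APS}.
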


We will use $\SiKh(L)$ to denote the homology group of $(\CKh(L),D)$, and use $\SiKh(L;\bZ/2)$ to denote the homology group with $\bZ/2$--coefficients. The main results of the paper are the following theorems.

\begin{Theorem}
\label{thm_disjoint_from_annulus_intro}
	Suppose $L\subset (-1,1)\times T^2$ is a link, let $c$ be an essential simple closed curve on $T^2$. Then $L$ can be isotoped to a link disjoint from $(-1,1)\times c$ if and only if the gradings of $\SiKh(L;\bZ/2)$ are supported in $\bZ\{[c]\}\subset \bZ\mathcal{C}$.
\end{Theorem}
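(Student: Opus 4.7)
For the forward implication, which is combinatorial, suppose $L$ has been isotoped so that $L\cap\bigl((-1,1)\times c\bigr)=\emptyset$. Projecting $L$ to $T^2$ then yields, after a generic perturbation, a diagram $D$ entirely contained in the annulus $A:=T^2\setminus c$. Every resolution of $D$ produces circles in $A$; such a circle, when essential in $T^2$, must be parallel to a core of $A$ and hence isotopic to $c$. It follows that every generator of $\CKh_v(L)$ has $\bZ\mathcal{C}$-grading in $\bZ\{[c]\}$, this property is preserved by the differential, and the support statement descends to $\SiKh(L;\bZ/2)$.

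For the converse I plan to transport the grading hypothesis to an instanton-theoretic setting and extract a topological consequence from it. First, I would invoke an instanton analogue $\SiHI(L)$ of the Khovanov skein homology, constructed from singular instanton Floer homology of $(I\times T^2,L)$ and carrying a natural $\bZ\mathcal{C}$-grading, as developed in later sections of the paper. Next, I would establish a Kronheimer-Mrowka style spectral sequence whose $E_2$-page is $\SiKh(L;\bZ/2)$ and whose abutment is $\SiHI(L)$, with every page preserving the $\bZ\mathcal{C}$-grading; this transfers the hypothesis, and the gradings of $\SiHI(L)$ are therefore also supported in $\bZ\{[c]\}$.

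The geometric payoff would then come from identifying $\SiHI(L)$ with the sutured instanton Floer homology of a suitable sutured manifold $M_L$ obtained from $\bigl((-1,1)\times T^2\bigr)\setminus N(L)$, under which the $\bZ\mathcal{C}$-grading matches the surface-decomposition grading along annuli of the form $(-1,1)\times c'$ for essential simple closed curves $c'\subset T^2$. The remaining task is then a sutured detection lemma: if $\SHI(M_L)$ has grading support in $\bZ\{[c]\}$, then the annulus $(-1,1)\times c$ admits an isotopy making it disjoint from $L$. The expected mechanism is that any transverse intersection of $L$ with $(-1,1)\times c$, which would persist under every isotopy if $L$ were not already isotopic away from this annulus, forces generators of $\SiHI(L)$ to appear in gradings outside $\bZ\{[c]\}$ via grading-shift formulas for the $\mu$-map action of $H_1(T^2)$.

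This last step is the main obstacle: extracting a rigid topological conclusion from a purely homological vanishing, in the spirit of Ni-type fibered knot detection. I expect it to rely on sutured instanton excision and surface-decomposition techniques analogous to those developed in \cite{XZ:excision} for the annular case, combined with a careful analysis of how essential intersections of $L$ with a decomposing surface shift gradings in $\SHI$.
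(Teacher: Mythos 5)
Your forward direction and the overall architecture (transfer the grading hypothesis to an instanton invariant via a Kronheimer--Mrowka style spectral sequence, then extract topology from the instanton side) match the paper's strategy, but the proof has a genuine gap exactly where you flag ``the main obstacle'': the detection step is never proved, only conjectured. In the paper this step is Proposition \ref{prop_c_grading_zero_implies_disjoint}, and it is the real content of the theorem; the spectral sequence (Theorem \ref{theorem_spectral_sequence}) only yields the rank inequality $\rank_{\bZ/2}\SiKh_c(L,i;\bZ/2)\ge \dim_\bC\SiHI_c(L,i)$ in each $c$--degree, after which everything hinges on showing that vanishing of $\SiHI(L)$ outside $c$--degree $0$ forces $L$ off $(-1,1)\times c$. (A smaller inaccuracy: $\SiHI(L)$ carries only the $\bZ$--valued $c$--grading coming from the generalized eigenspaces of $\muu(S^1\times c)$, not a full $\bZ\mathcal{C}$--grading, and the spectral sequence preserves only this coarser grading; your hypothesis must first be reduced to ``supported at $c$--degree $0$,'' which works on the torus because all circles in a resolved diagram are parallel.)

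Moreover, the specific route you propose for the missing step---identifying $\SiHI(L)$ with $\SHI$ of a sutured complement $M_L$ and invoking a sutured non-vanishing/detection lemma with surface-decomposition gradings---runs into the obstruction the paper points out explicitly: $[-1,1]\times T^2$ is not a priori a balanced sutured manifold, so the sutured non-vanishing theorem of \cite{XZ:excision} cannot be applied in a straightforward way. The paper instead stays on the closed manifold $S^1\times T^2$ and proves a new non-vanishing theorem there (Theorem \ref{thm_closed_nonvanish_singular}), whose proof requires the excision/irreducibility and generalized Thurston norm lemmas of Section \ref{sec_non_vanishing_closed_manifolds} (including the taut-foliation input behind Lemma \ref{lem_norm_minimizing_after_excision}). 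The detection argument then runs as follows: take an $L$--norm-minimizing surface homologous (after a twist) to $T_c=S^1\times c$, add auxiliary parallel copies of a dual curve so the intersection number with the link is odd and at least $3$, apply the non-vanishing theorem to see that the generalized eigenspace of $\muu$ with eigenvalue $2g+n_1-2$ is non-zero, and use the hypothesis that the $c$--grading is supported at $0$ to force $g=1$, $n_1=0$; finally a Seifert-fibered-space argument shows the resulting annulus disjoint from $L$ is isotopic to $[-1,1]\times c$. None of this machinery is supplied or replaced in your proposal, so as written the converse direction is not established.
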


\begin{Theorem}
\label{thm_main_detection_result}
	Suppose $L$ is a non-empty link in $(-1,1)\times T^2$ such that 
	$$\rank_{\bZ/2} \SiKh(L;\bZ/2)\le 2.$$ Then $L$ is isotopic to 
a knot embedded in $\{0\}\times T^2$, and hence
$$\rank_{\bZ/2} \SiKh(L;\bZ/2)= 2.$$
\end{Theorem}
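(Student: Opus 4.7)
The plan is to use Theorem~\ref{thm_disjoint_from_annulus_intro} to reduce the problem to a thickened annulus, and then invoke the annular Khovanov detection result of \cite{XZ:excision}. The first step is to show that the hypothesis $\rank_{\bZ/2}\SiKh(L;\bZ/2)\le 2$ forces the grading support of $\SiKh(L;\bZ/2)$ to lie inside $\bZ\{[c]\}\subset\bZ\mathcal{C}$ for a single class $[c]\in\mathcal{C}$ (or to be concentrated at $0$). In any fixed resolution $v$ the essential circles are pairwise disjoint and hence mutually isotopic on $T^2$, so each chain in $\CKh_v(L)$ has grading of the form $n[\gamma_v]$ for a single $[\gamma_v]\in\mathcal{C}$, and the chain-level support is the union of the lines $\bZ\{[\gamma_v]\}$ as $v$ ranges over $\{0,1\}^d$. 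Distinct resolutions can in general produce genuinely distinct $[\gamma_v]$, so the substance of this step is to use the rank bound to collapse these onto a single line. I expect this to draw on the instanton-Floer machinery underlying Theorem~\ref{thm_disjoint_from_annulus_intro}, for instance a spectral sequence from $\SiKh$ to a Floer-theoretic invariant whose grading records the geometric location of $L$ in $T^2$.

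Once the support is contained in $\bZ\{[c]\}$, I apply Theorem~\ref{thm_disjoint_from_annulus_intro} to isotope $L$ into $(-1,1)\times(T^2\setminus c)$; if the support is $\{0\}$, any essential $c$ works. For such an $L$ the APS chain complex drawn on $T^2$ coincides with the annular Khovanov complex of $L$, because the merge/split formulas in the excerpt depend only on whether each circle is trivial or essential, and these notions are preserved under the inclusion $T^2\setminus c\hookrightarrow T^2$. Hence $\rank_{\bZ/2}\AKh(L;\bZ/2)\le 2$. By the main theorem of \cite{XZ:excision}, annular Khovanov homology detects trivial braid closures and trivial links; since a trivial $n$-braid closure and an $n$-component trivial link both have annular Khovanov rank $2^n$, non-emptiness together with the bound forces $L$ to be either a single essential circle in the annulus or a single null-homotopic unknot bounding a disk there. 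In either case $L$ is isotopic to a knot in $\{0\}\times T^2$ and $\rank_{\bZ/2}\SiKh(L;\bZ/2)=2$.

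The main obstacle is clearly the first step. The essential class $[\gamma_v]$ is a combinatorial function of the resolution and can vary widely across $v$, so at the purely chain-level there is no reason for surviving homology classes to line up on a single line $\bZ\{[c]\}$. Ruling this out seems to genuinely require the Floer-theoretic input developed earlier in the paper, rather than the statement of Theorem~\ref{thm_disjoint_from_annulus_intro} alone.
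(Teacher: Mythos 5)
Your proposal is incomplete precisely where the content of the theorem lies: you state yourself that the first step (getting from $\rank_{\bZ/2}\SiKh(L;\bZ/2)\le 2$ to the existence of an essential curve $c$ with $L$ isotopic off $(-1,1)\times c$) is the main obstacle and you do not carry it out. The paper does not do this by collapsing the $\bZ\mathcal{C}$--support of $\SiKh$ onto a single line and then citing Theorem \ref{thm_disjoint_from_annulus_intro}; it never proves (or needs) that Khovanov-level statement. Instead it uses Theorem \ref{theorem_spectral_sequence}, which gives $\rank_{\bZ/2}\SiKh_c(L,i;\bZ/2)\ge\dim_\bC\SiHI_c(L,i)$ for every $c$ and $i$, hence $\dim_\bC\SiHI(L)\le 2$, and then Proposition \ref{prop_inst_rank_2_disjoint_from_c}: by the symmetry of the spectrum of $\muu$ (Lemma \ref{lem_spectrum_symmetric_at_zero}) and the integrality of its eigenvalues (Lemma \ref{lem_range_of_eigenvalues}), either some $c_i$--grading is already supported at $0$, or the two simultaneous eigenvalues of $\big(\muu(S^1\times c_1),\muu(S^1\times c_2)\big)$ are $(\lambda_1,\lambda_2)$ and $(-\lambda_1,-\lambda_2)$, and one chooses coprime $(p,q)$ with $p\lambda_1+q\lambda_2=0$ and realizes $p[c_1]+q[c_2]$ by a simple closed curve $c$ on the torus, so that $\muu(S^1\times c)$ vanishes on $\SiHI(L)$; Proposition \ref{prop_c_grading_zero_implies_disjoint} then produces the isotopy off $(-1,1)\times c$. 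This eigenvalue argument on the Floer side is exactly the missing ingredient your plan defers to ``the instanton-Floer machinery,'' so the proposal has a genuine gap rather than an alternative proof.

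The annular step also does not go through as written. Knowing that $\AKh$ detects trivial links and trivial braid closures does not let you conclude anything from a rank bound alone: detection theorems require matching the (graded) invariant of the model link, and a rank-$2$ bound by itself does not a priori exclude, say, a knotted knot contained in a ball, nor links whose $\AKh$ is supported in annular weights $\pm 2$. The paper instead uses the $\mathfrak{sl}_2(\bC)$--action on $\AKh(L;\bC)$ of \cite{GLW_AKh_rep}, whose weight is the $f$--grading, to deduce from $\dim_\bC\AKh(L;\bC)\le 2$ that the top $f$--grading is at most $1$; it then applies \cite[Theorem 1.3]{XZ:excision} to conclude that $L$ is either a parallel copy of $c$ or contained in a $3$--ball, and in the ball case it invokes Kronheimer--Mrowka's unknot detection \cite{KM:Kh-unknot} (which your write-up omits entirely). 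Your observation that $\SiKh(L)=\AKh(L)$ once $L$ lies in $(-1,1)\times(T^2\setminus c)$ is correct, but the remainder of the annular argument needs these additional structural inputs, not just the detection statements.
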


Our proofs of the theorems above rely heavily on the properties of the torus.

\begin{Question} 
Does the statement of Theorem \ref{thm_main_detection_result} still hold for the Khovanov skein homology of an arbitrary oriented compact surface?
\end{Question}

The proofs of Theorem \ref{thm_disjoint_from_annulus_intro} and Theorem \ref{thm_main_detection_result} follow the strategy of \cite{KM:Kh-unknot}: we establish a spectral sequence that relates $\SiKh(L;\bZ/2)$ to a singular instanton Floer homology group, then apply different excision properties of the instanton Floer homology group to prove the desired results. Since $[-1,1]\times T^2$ is not a priori a balanced sutured manifold in the sense of \cite{Juh:sut}, the non-vanishing theorem for sutured singular instanton Floer homology \cite[Theorem 7.12]{XZ:excision} cannot be applied in a straightforward way. This is discussed in Section \ref{sec_non_vanishing_closed_manifolds} where we prove a similar non-vanishing theorem for singular instanton Floer homology on closed manifolds. In Section \ref{sec_instanton_Floer_for_links}, we define  an intanton Floer homology invariant $\SiHI(L)$ for links  $L\subset (-1,1)\times \Sigma$, where $\Sigma$ is a closed oriented surface with arbitrary genus, and we study the basic properties of this invariant.  In Section \ref{sec_canonical_isomorphisms} and Section \ref{sec_spectral_sequence}, we show that when $\Sigma$ is a torus there exists a spectral sequence relating $\SiKh(L;\bZ/2)$ and $\SiHI(L)$. The sepectral sequence, together with the properties established in Section \ref{sec_instanton_Floer_for_links}, imply Theorem \ref{thm_disjoint_from_annulus_intro} and Theorem \ref{thm_main_detection_result}.
\\

{\bf Acknowledgement.} We would like to thank Maggie Miller for her help with the proof of Lemma \ref{lem_norm_minimizing_after_excision}.

\section{A non-vanishing theorem for closed manifolds}
\label{sec_non_vanishing_closed_manifolds}

This section establishes a non-vanishing theorem for singular instanton Floer homology on closed manifolds. The singular instanton Floer homology theory was developed by Kronheimer-Mrowka \cite{KM:Kh-unknot, KM:YAFT}, and the reader may refer to \cite[Section 2]{XZ:excision} for a brief review. Let $Y$ be an oriented closed 3-manifold, $L\subset Y$ be a link, $(\omega,\partial \omega)\subset (Y,L)$ be a properly embedded 1-manifold, and suppose $(Y,L,\omega)$ forms an admissible triple as defined in \cite[Section 2.1]{XZ:excision}, we use $\II(Y,L,\omega)$ to denote the singular instanton Floer homology group of the triple $(Y,L,\omega)$. All the instanton Floer homology groups in this paper are with $\bC$--coefficients unless otherwise specified. 

Let $\II(Y,L,\omega)_{(2)}$ be the intersection of the generalized eigenspaces of $\mu(\pt)$ with eigenvalue $2$ for all points in $Y$. 
Suppose $\Sigma\subset Y$ is an embedded oriented surface with genus $g$ that intersects $L$ transversely at $n$ points, we use $\II(Y,L,\omega|\Sigma)$ to denote the generalized eigenspace of $\muu(\Sigma)$ in $\II(Y,L,\omega)_{(2)}$ with eigenvalue $2g+n-2$.  

If $\Sigma$ is a compact surface possibly with boundary, we  use $x(\Sigma)$ to denote the Thurston norm \cite{Thurston1986anorm} of $\Sigma$. 
Namely, if $\Sigma$ is connected, then $x(\Sigma)$ is defined to be $\max\{0,-\chi(\Sigma)\}$, where $\chi(\Sigma)$ is the Euler characteristic of $\Sigma$; if $\Sigma$ is not connected, let $\Sigma_1,\cdots,\Sigma_k$ be the connected components of $\Sigma$, then $x(\Sigma)$ is defined to be $\sum_{i=1}^k x(\Sigma_i)$. 
Let $Y$ be an oriented compact 3-manifold possibly with boundary, and suppose $a\in H_2(Y,\partial Y;\bZ)$, then the \emph{Thurston norm of $a$} is defined to be the minimum value of $x(\Sigma)$ for all oriented, properly embedded surfaces $(\Sigma,\partial \Sigma)\subset (Y,\partial Y)$ whose fundamental classes are equal to $a$. 
We use $x(a)$ to denote the Thurston norm of $a$.

Let $L\subset Y$ be a link in the interior of $Y$, Scharleman \cite{Schar} introduced a generalized Thurston norm $x_L(\cdot)$ with respect to $L$ as follows. Suppose $(\Sigma, \partial \Sigma)\subset (Y,\partial Y)$ is a properly embedded surface that is transverse to $L$ and intersects $L$ at $n$ points, let $\chi_L(\Sigma)=n-\chi(\Sigma)$. In other words, $\chi_L(\Sigma)$ is the Euler characteristic of $\Sigma-L$. If $\Sigma$ is connected, let $x_L(\Sigma)=\max\{0,\chi_L(\Sigma)\}$; 
if $\Sigma$ is not connected, let $\Sigma_1,\cdots,\Sigma_k$ be the connected components of $\Sigma$, define $x_L(\Sigma)$ to be $\sum_{i=1}^k x_L(\Sigma_i)$. 
Let $a\in H_2(Y,\partial Y;\bZ)$, the \emph{generalized Thurston norm of $a$ with respect to $L$}, which is denoted by $x_L(a)$, is defined to be the minimum value of $x_L(\Sigma)$ for all oriented, properly embedded surfaces $(\Sigma,\partial \Sigma)\subset (Y,\partial Y)$ that are transverse to $L$ and have fundamental classes equal to $a$.

Suppose $(\Sigma,\partial \Sigma)\subset (Y,\partial Y)$ is a properly embedded oriented surface, 
and let $[\Sigma]\in H_2(Y,\partial Y;\bZ)$ be the fundamental class of $\Sigma$,
we say that $\Sigma$ \emph{minimizes the Thurston norm}, or $\Sigma$ is \emph{norm-minimizing}, if $x(\Sigma)=x([\Sigma])$. 
Similarly, if $L$ is a link in the interior of $Y$ that is transverse to $\Sigma$, we say that $\Sigma$ \emph{minimizes the generalized Thurston norm with respect to $L$}, or $\Sigma$ is \emph{$L$--norm-minimizing}, if $x_L(\Sigma)=x_L([\Sigma])$.

If $L$ is oriented, we use $\Sigma\cdot L$ to denote the algebraic intersection number of $\Sigma$ and $L$. Notice that it makes sense to refer to the parity of $\Sigma\cdot L$ without specifying the orientations of $\Sigma$ and $L$.

From now on, all the curves, surfaces, and 3-manifolds are assumed to be oriented unless otherwise specified, and all the maps are assumed to be smooth.

 The main result of this section is the following theorem.

\begin{Theorem}\label{thm_closed_nonvanish_singular}
Let $Y$ be a connected closed 3-manifold, let $L\subset Y$ be a link such that $Y-L$ is irreducible, and let $\omega\subset Y-L$ be an embedded closed 1-manifold.  Suppose $\Sigma\subset Y$ is a connected, oriented, closed, $L$--norm-minimizing surface in $Y$, such that $|\Sigma\cdot L|$ is odd and at least $3$. Then 
$$
\II(Y,L,\omega|\Sigma)\neq 0.
$$
\end{Theorem}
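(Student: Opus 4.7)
The plan is to reduce the theorem to the non-vanishing result for sutured singular instanton Floer homology \cite[Theorem 7.12]{XZ:excision} by viewing $(Y, L, \omega)$ together with $\Sigma$ as a closure of a balanced sutured tangle manifold obtained by cutting $Y$ open along $\Sigma$. Write $n = |\Sigma \cdot L|$ and let $g$ denote the genus of $\Sigma$, so that the target $\muu(\Sigma)$-eigenvalue is $\chi_L(\Sigma) = 2g + n - 2$.

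First, I would form the cut-open manifold $M = Y \setminus N(\Sigma)$ with boundary $\Sigma_+ \sqcup \Sigma_-$, the cut-open tangle $T = L \cap M$, and, after isotoping $\omega$ to be transverse to $\Sigma$, the properly embedded $1$-manifold $\omega' = \omega \cap M \subset M - T$. I would then equip $\Sigma_\pm$ with sutures $\gamma$ compatible with the $n$ endpoints of $T$ on each side so that $(M, T, \omega', \gamma)$ becomes a balanced sutured tangle manifold of which $(Y, L, \omega, \Sigma)$ is a closure in the framework of \cite{XZ:excision}. By the very definition of the sutured singular instanton group as the top eigenspace of $\muu(\Sigma)$ on a closure, this would give
\[
\II(Y, L, \omega \mid \Sigma) \;\cong\; \SHI(M, T, \omega', \gamma).
\]
The hypothesis $n \geq 3$ supplies enough tangle endpoints on each $\Sigma_\pm$ to make the closure admissible, and the hypothesis that $n$ is odd ensures that the $\SO(3)$-bundle data carried by $\omega'$ together with the tangle endpoints is non-trivial on each component of $\partial M$.

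Next, I would verify the hypotheses of \cite[Theorem 7.12]{XZ:excision} for $(M, T, \omega', \gamma)$. Irreducibility of $M - T$ follows at once from irreducibility of $Y - L$. The remaining, and more delicate, input is that each of $\Sigma_\pm$ is incompressible in $M - T$ and $T$-norm-minimizing in its relative homology class. These should follow by a standard regluing argument: any compressing or boundary-compressing disk for $\Sigma_\pm$ in $M - T$, or any properly embedded surface in $M$ of strictly smaller $T$-norm representing the same relative class, can be reglued along $\Sigma$ in $Y$ to produce a closed surface whose $L$-Thurston norm is strictly less than $x_L(\Sigma)$, contradicting the assumption that $\Sigma$ is $L$-norm-minimizing. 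The main obstacle will be precisely this regluing step, which must carefully track the tangle endpoints on $\Sigma_\pm$; here the assumption that $n$ is odd and at least $3$ is essential, since it rules out trading a reduction in Euler characteristic for an increase in the number of intersection points with $L$ in a norm-preserving way. With these ingredients in place, \cite[Theorem 7.12]{XZ:excision} would yield $\SHI(M, T, \omega', \gamma) \neq 0$, which proves the theorem.
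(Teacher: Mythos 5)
There is a genuine gap, and it is exactly the one the authors flag in the introduction: the cut-open manifold is not a balanced sutured manifold, so \cite[Theorem 7.12]{XZ:excision} cannot be applied ``in a straightforward way''. When you cut $Y$ along the closed surface $\Sigma$, the boundary of $M=Y\setminus N(\Sigma)$ consists of two \emph{closed} surfaces $\Sigma_\pm$ carrying no sutures; a balanced sutured manifold requires a non-empty suture set on every boundary component, with $\partial M\setminus\gamma$ decomposing into $R_+(\gamma)$ and $R_-(\gamma)$. Your step ``equip $\Sigma_\pm$ with sutures $\gamma$ compatible with the $n$ endpoints of $T$ so that $(Y,L,\omega,\Sigma)$ is a closure'' has no content as stated: any suture placed on $\Sigma_+$ forces part of $\Sigma_+$ to become $R_-$, and the closures of such a sutured manifold are formed by attaching auxiliary pieces along $\gamma$ and gluing $R_+$ to $R_-$; none of these closures is $(Y,L,\omega)$ with distinguished surface $\Sigma$, because in $Y$ the whole of $\Sigma_+$ is reglued to the whole of $\Sigma_-$. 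So the claimed identification $\II(Y,L,\omega\mid\Sigma)\cong\SHI(M,T,\omega',\gamma)$ ``by the very definition'' does not hold, and the rest of the argument (tautness of the sutured piece, regluing of compressing disks) has nothing to attach to. A further unaddressed mismatch: the cut-open tangle $T=L\cap M$ generally contains closed components (components of $L$ disjoint from $\Sigma$) and arcs with both endpoints on the same side of $\Sigma$ (when $\Sigma$ and $L$ intersect with both signs), whereas the sutured non-vanishing theorem with tangles concerns vertical tangles; removing these defects is itself a substantial part of any proof.

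For comparison, the paper takes a different route that avoids the sutured theory altogether: it first tubes $\Sigma$ along $L$ to make all intersections of the same sign, then performs excision \emph{along} $\Sigma$ (using Lemma \ref{lem_irreducible_after_excision_along_Sigma}, Lemma \ref{lem_norm_minimizing_singular_excision} and \cite[Theorem 6.6]{XZ:excision}) to place the $n$ intersection points on distinct components of $L$, adjusts parity by adding arcs and parallel copies of a component via torus excisions with $(S^1\times S^2, S^1\times\{\text{points}\})$ pieces, and finally (Lemma \ref{lem_non_vanishing_even_number_of_intersections} and Theorem \ref{thm_closed_non_vanishing_L_disjoint_from_Sigma}) excises away all the components meeting $\Sigma$ and all meridians so as to land in Kronheimer--Mrowka's non-vanishing theorem for \emph{closed, non-singular} manifolds (Theorem \ref{thm_closed_nonvanish_nonsingular}), with the topological input supplied by the irreducibility and norm-minimization lemmas of Sections \ref{subsec_excision_irreducibility} and \ref{subsec_excision_Thurston}. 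If you want to salvage your outline, you would essentially have to reprove a closed-manifold analogue of the sutured non-vanishing theorem, which is what the paper does.
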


\begin{remark}
If $Y-L$ is reducible then the statement of Theorem \ref{thm_closed_nonvanish_singular} does not always hold. For example, let $L_0\subset Y_0$ be a link in a closed  3-manifold, and let $K\subset S^1\times S^2$ be the knot given by $S^1\times \{\pt\}$.
Let $Y=Y_0\# S^1\times S^2$, let $L\subset Y$ be given by the disjoint union of $L_0$ and $K$. Then the Chern-Simons functional has no critical point because there is no $\SU(2)$--representation of $\pi_1(Y-L-\omega)$ that satisfies the desired holonomy conditions, and hence $\II(Y,L,\omega)$ is zero for every possible $\omega$.
\end{remark}

Theorem \ref{thm_closed_nonvanish_singular} is analogous to the following result of Kronheimer and Mrowka.

\begin{Theorem}[{\cite[Theorem 7.21]{KM:suture}}]
\label{thm_closed_nonvanish_nonsingular}
	Let $Y$ be a connected closed irreducible 3-manifold, and let $\omega\subset Y$ be an embedded closed 1-manifold.  Suppose $\Sigma\subset Y$ is a connected, closed, norm-minimizing surface in $Y$ with genus at least $1$ such that $\Sigma\cdot \omega$ is odd, then 
	$$ \II(Y,\emptyset,\omega|\Sigma)\neq 0.$$
\end{Theorem}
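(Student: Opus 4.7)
The plan is to reduce the closed-manifold statement to the main non-vanishing theorem for taut balanced sutured instanton Floer homology proved in \cite{KM:suture}, which in turn rests on a Gabai-style sutured manifold hierarchy.

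First I would cut $Y$ open along $\Sigma$ to obtain a compact $3$-manifold $M$ with boundary $\Sigma_+\cup\Sigma_-$, two parallel copies of $\Sigma$, and choose a suture $\gamma\subset\partial M$ consisting of an oppositely oriented pair of parallel non-separating simple closed curves on each of $\Sigma_\pm$. This yields a balanced sutured manifold $(M,\gamma)$. The irreducibility of $Y$ together with the norm-minimality of $\Sigma$ forces $(M,\gamma)$ to be taut: any reducing sphere in $M$ would cap off in $Y$ to contradict irreducibility, and any essential compressing disk for $R_\pm(\gamma)=\Sigma_\pm\setminus\gamma$ would, after re-gluing, produce a surface in $Y$ representing the same homology class as $\Sigma$ but of strictly smaller Thurston norm, contradicting norm-minimality.

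Second, I would use the closed-manifold-to-sutured reformulation developed in \cite{KM:suture} to identify $\II(Y,\emptyset,\omega|\Sigma)$ with the sutured instanton invariant $\SHI(M,\gamma)$ of the cut-open piece. The $1$-cycle $\omega$ restricts to a properly embedded $1$-manifold in $M$ whose boundary meets $\gamma$ with parity dictated by $\Sigma\cdot\omega$; the hypothesis that $\Sigma\cdot\omega$ is odd is precisely what makes the resulting $\SO(3)$-bundle over the closure of $(M,\gamma)$ admissible, and it is also what singles out the eigenvalue $2g-2$ in the spectral decomposition of $\muu(\Sigma)$. The genus assumption $g\ge 1$ enters here both so that $(M,\gamma)$ is non-degenerate as a sutured manifold (avoiding a sphere boundary) and so that the target eigenvalue $2g-2$ of $\muu(\Sigma)$ lies in the range where the Kronheimer--Mrowka eigenvalue analysis applies.

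Third, I would invoke the main non-vanishing theorem of \cite{KM:suture}: for any taut balanced sutured manifold equipped with admissible bundle data, the sutured instanton homology is non-zero. Combined with step two this yields $\II(Y,\emptyset,\omega|\Sigma)\cong\SHI(M,\gamma)\ne 0$. The hardest part of the program is step two, the closed-to-sutured dictionary: it demands careful bookkeeping of the auxiliary surface used in the closure construction, of the bundle data across that surface, and of the spectral projection onto the top eigenspace of $\muu(\Sigma)$. Step three is then used as a black box, its own proof proceeding by a hierarchy argument that terminates on product sutured manifolds whose sutured instanton homology is one-dimensional.
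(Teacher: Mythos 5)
The paper does not actually prove this statement: it is quoted verbatim from Kronheimer--Mrowka (Theorem 7.21 of the cited paper), so there is no internal proof to compare against and your proposal has to stand on its own. It does not, because its central step is missing. The identification $\II(Y,\emptyset,\omega|\Sigma)\cong \SHI(M,\gamma)$, where $M$ is $Y$ cut along $\Sigma$ with your ad hoc choice of sutures, is not a ``closed-manifold-to-sutured reformulation developed in'' the cited paper; no such dictionary appears there, and it is far from routine. Sutured instanton homology is defined through closures, which glue a product piece $[-1,1]\times F$ along the annular neighborhood of $\gamma$ and then identify the two resulting boundary surfaces; consequently $Y$ itself is never a closure of $(M,\gamma)$, and the distinguished surface of any closure is $\bar{R}=R_\pm\cup F$, not $\Sigma$. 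Making your step two precise would require a genuine excision argument identifying the eigenspace $\II(Y,\emptyset,\omega|\Sigma)$ with the corresponding group of an actual closure, together with a check that tautness of $(M,\gamma)$ holds for your sutures --- tautness means $R_\pm(\gamma)$ is norm-minimizing in $H_2(M,\gamma)$, which is not literally the condition that $\Sigma$ is norm-minimizing in $Y$, since $R_\pm$ is $\Sigma$ cut along the suture curves. This bookkeeping is exactly the content you defer, and it is the whole difficulty.

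There is also a circularity problem with invoking the taut-sutured non-vanishing theorem as a black box. In the instanton setting the logical order is the opposite of the one you propose: Kronheimer and Mrowka prove the closed-manifold statement first and directly --- via Gabai's theorem producing a taut foliation with $\Sigma$ as a compact leaf, the Eliashberg--Thurston perturbation to a contact structure, an embedding into a closed symplectic $4$-manifold, and the non-vanishing of Donaldson invariants of symplectic $4$-manifolds --- and then deduce non-vanishing of $\SHI$ for taut balanced sutured manifolds by constructing closures that are irreducible with norm-minimizing $\bar{R}$, to which the closed theorem applies. The proof you attribute to the black box (a sutured hierarchy terminating in product sutured manifolds with one-dimensional invariant) is Juh\'asz's argument for Heegaard Floer $SFH$, not the instanton argument. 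So as written, your reduction rests on an unproved identification and runs the known proofs backwards; a correct self-contained proof would instead follow the foliation--contact--symplectic route.
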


\begin{remark}
If $Y$ is reducible then the statement of Theorem \ref{thm_closed_nonvanish_nonsingular} does not always hold. For example, let $Y_0$ be a connected closed oriented 3-manifold, let $Y=Y_0\# S^1\times S^2$, let $\omega\subset Y$ be given by $S^1\times \{\pt\}\subset S^1\times S^2$, then $\II(Y,\emptyset,\omega)=0$, because there is no $\SU(2)$ representation of $\pi_1(Y-\omega)$ satisfying the holonomy conditions and hence the  Chern-Simons functional has no critical point.
\end{remark}

Our strategy of proving Theorem \ref{thm_closed_nonvanish_singular} is to use the various excision formulas of singular instanton Floer homology to reduce the problem to Theorem \ref{thm_closed_nonvanish_nonsingular}. In order to apply the excision formulas, we need to show that the irreducibility of $Y-L$ and the $L$--norm-minimizing property of $\Sigma$ are preserved under the relevant excisions. This will be the established in Section \ref{subsec_excision_irreducibility} and Section \ref{subsec_excision_Thurston}. Most of the results in Sections \ref{subsec_excision_irreducibility} and \ref{subsec_excision_Thurston} follow from the standard techniques and some of them may be well-known to experts. Nevertheless, for the sake of completeness, we will present the proof when a direct reference is difficult to find.

\subsection{Excisions and irreducibility}
\label{subsec_excision_irreducibility}
Recall the following definition from \cite[Definition 1.2 (c)]{Schar}.
\begin{Definition}
Let $Y$ be a compact manifold possibly with boundary. Let $L\subset Y$ be a link in the interior of $Y$, and let $(\Sigma,\partial \Sigma)\subset (Y,\partial Y)$ be a properly embedded surface that is transverse to $L$. The surface $\Sigma$ is called \emph{$L$--compressble} if there exists an embedded closed disk $D\subset Y-L$, such that $\partial D=D\cap  (\Sigma-L)$, and $\partial D$ does not bound a disk on $\Sigma-L$. If $\Sigma$ is not $L$--compressble, then it is called \emph{$L$--incompressible}.
\end{Definition}

\begin{remark}
By Dehn's lemma, $\Sigma$ is $L$--incompressible if and only if the map $$\pi_1(\Sigma-L)\to\pi_1(Y-L)$$ induced by the inclusion is injective (see for example \cite[Corollary 3.3]{hatcher2000notes}).
\end{remark}

If $Y$ is a reducible 3-manifold, then an embedded sphere in $Y$ that does not bound a ball is called a \emph{reducing sphere} of $Y$. 

\begin{Lemma}
\label{lem_disjoint_reducing_sphere}
Suppose $Y$ is a closed  3-manifold, let $L\subset Y$ be a link, and let $\Sigma\subset Y$ be an $L$--incompressible surface. Suppose $Y-L$ is reducible, then there exists a reducing sphere of $Y-L$ that is disjoint from $\Sigma$.	
\end{Lemma}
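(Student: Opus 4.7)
My plan is to run the standard innermost-disk argument from $3$-manifold topology. I would begin by picking a reducing sphere $S$ of $Y - L$, put it in general position with respect to $\Sigma$, and choose $S$ so as to minimize the number $|S \cap \Sigma|$ of intersection circles among all reducing spheres of $Y - L$. The goal is to show that this minimum is zero, which exhibits $S$ as the required reducing sphere disjoint from $\Sigma$.

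Suppose for contradiction that $|S \cap \Sigma| > 0$. I would first locate a disk $D' \subset \Sigma - L$ whose boundary lies on $S$ and whose interior is disjoint from $S$. Because $S$ is a sphere, there is an innermost disk $D \subset S$ bounded by some circle $C \in S \cap \Sigma$ with $\mathrm{int}(D) \cap \Sigma = \emptyset$. Since $D \subset Y - L$ and $C \subset \Sigma - L$, the $L$-incompressibility of $\Sigma$ yields a disk $D'_0 \subset \Sigma - L$ with $\partial D'_0 = C$; if $\mathrm{int}(D'_0) \cap S \neq \emptyset$, replacing $D'_0$ by an innermost-on-$D'_0$ subdisk whose boundary lies in $S \cap \Sigma$ produces a disk $D'$ with $\partial D' = C' \in S \cap \Sigma$ and $\mathrm{int}(D') \cap S = \emptyset$. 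Then I would compress $S$ along $D'$: push $D'$ off $\Sigma$ to parallel disks $D'_+, D'_- \subset Y - L$ on the two sides of $\Sigma$, remove the annular collar $A$ of $C'$ from $S$, and cap the two resulting boundary circles by $D'_+$ and $D'_-$. This yields two disjoint embedded spheres $S_+, S_- \subset Y - L$, and a direct count of intersection circles gives $|S_+ \cap \Sigma| + |S_- \cap \Sigma| = |S \cap \Sigma| - 1$, so both are strictly smaller than $|S \cap \Sigma|$.

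The main step, which I expect will require the bulk of the technical work, is to show that at least one of $S_+, S_-$ is again a reducing sphere of $Y - L$; this contradicts the minimality of $|S \cap \Sigma|$ and forces $|S \cap \Sigma| = 0$. To prove it, I would argue by contradiction: if both $S_\pm$ bound $3$-balls $B_\pm \subset Y - L$, let $N \subset Y - L$ be the thickening of $D'$ in the bicollar of $\Sigma - L$, so that $N$ is a $3$-ball with $\partial N = D'_+ \cup D'_- \cup A$. Since the spheres $\partial B_+, \partial B_-$ are disjoint, $B_+$ and $B_-$ are either disjoint or nested in $Y - L$. In the disjoint configuration, a short check using $S_+ \cap S_- = \emptyset$ rules out $N \subset B_\pm$ and gives $B_+ \cap N = D'_+$ and $B_- \cap N = D'_-$; then $B_+ \cup N \cup B_-$ is three $3$-balls glued pairwise along boundary disks, hence itself a $3$-ball, with boundary $E_+ \cup A \cup E_- = S$, where $E_+$ and $E_-$ denote the two disks of $S \setminus A$. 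In the nested configuration, say $B_- \subset B_+$, one checks that $N \subset B_+$ and that $B_- \cup N$ is a $3$-ball inside $B_+$ meeting $\partial B_+$ only along $D'_+$; an Alexander/Schoenflies-type argument then shows that $B_+ \setminus \mathrm{int}(B_- \cup N)$ is a $3$-ball with boundary $S$. In either case $S$ bounds a $3$-ball in $Y - L$, contradicting the hypothesis that $S$ is a reducing sphere, and the proof is complete.
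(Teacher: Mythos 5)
Your proposal follows the same innermost-disk strategy as the paper's proof: pick a reducing sphere $S$ of $Y-L$ minimizing the number of intersection circles with $\Sigma$, use $L$--incompressibility to produce a disk $D'\subset\Sigma-L$ bounded by a circle of $S\cap\Sigma$ and with interior disjoint from $S$, compress $S$ along it, and argue that one of the two resulting spheres is again a reducing sphere meeting $\Sigma$ in fewer circles, contradicting minimality. (Your detour --- innermost on $S$, then incompressibility, then innermost on the resulting disk --- lands in the same configuration the paper reaches directly.) The paper asserts the key step in one sentence (``at least one of $S_1$ and $S_2$ is a reducing sphere''), whereas you attempt to prove it, and that is where the one genuine issue in your write-up sits.

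The dichotomy ``since $\partial B_+$ and $\partial B_-$ are disjoint, $B_+$ and $B_-$ are either disjoint or nested'' is not exhaustive: there is a third configuration in which $S_+\subset\mathrm{int}(B_-)$ and $S_-\subset\mathrm{int}(B_+)$ simultaneously (two parallel round spheres in $S^3$, with each ball taken to be the side containing the other sphere). In that case $B_+\cup B_-$ is both compact and open in $Y-L$ (every point of $\partial B_\pm$ lies in the interior of the other ball), hence is a closed component of $Y-L$ covered by the two open balls $\mathrm{int}(B_\pm)$, so that component is homeomorphic to $S^3$; since $S$ lies in it, Alexander's theorem forces $S$ to bound a ball, again a contradiction. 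So the omitted case is easily dispatched --- and it is vacuous whenever $Y$ is connected and $L\neq\emptyset$, since then $Y-L$ has no compact component --- but as written your case analysis has a hole at exactly the point where you are supplying the detail the paper leaves implicit. The remainder of your argument (the count $|S_+\cap\Sigma|+|S_-\cap\Sigma|=|S\cap\Sigma|-1$, the ball-gluing $B_+\cup N\cup B_-$ in the disjoint case, and the Alexander/Schoenflies step showing $B_+\setminus\mathrm{int}(B_-\cup N)$ is a ball in the nested case) is correct.
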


\begin{proof}
Let $S$ be a reducing sphere of $Y-L$ that intersects $\Sigma$ transversely, and choose $S$ such that $S\cap \Sigma$ has the minimum number of components. Suppose $S\cap \Sigma\neq \emptyset$, then since $\Sigma$ is $L$--incompressible, every component of $S\cap \Sigma$ bounds a disk in $\Sigma-L$. Take an innermost component of $S\cap \Sigma$, we obtain a disk $D\subset \Sigma - L$ such that $D\cap S=\partial D$. The curve $\partial D$ divides $S$ into two closed disks $D_1$ and $D_2$, let $S_1=D\cup D_1$, $S_2=D\cup D_2$. Since $S$ is a reducing sphere, at least one of $S_1$ and $S_2$ is a reducing sphere. Perturbing $S_1$ and $S_2$ gives spheres that intersect $\Sigma$ at fewer number of components, which is contradictory to the definition of $S$. Therefore $S\cap \Sigma=\emptyset$.
\end{proof}

\begin{Lemma}
\label{lem_irreducible_after_gluing_along_incompressible_boundary}
Let $Y$ be an irreducible 3-manifold with boundary, and let $R_1$ and $R_2$ be two components of $\partial Y$ endowed with the boundary orientation. Suppose $R_1$ and $R_2$ are both incompressible. Let $\varphi:R_1\to R_2$ be an orientation-reversing diffeomorphism, and let $Y'$ be the the manifold obtained  from $Y$ by gluing $R_1$ and $R_2$ via $\varphi$, then $Y'$ is also irreducible.
\end{Lemma}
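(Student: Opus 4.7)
The plan is a standard innermost disk argument that reduces the irreducibility of $Y'$ to that of $Y$. Let $R \subset Y'$ denote the image of $R_1$ (equivalently $R_2$) under the gluing; note that $R$ is two-sided because $\varphi$ is orientation-reversing with respect to the boundary orientations. The first step is to verify that $R$ is incompressible in $Y'$: any compressing disk $D \subset Y'$ has $\partial D \subset R$ essential on $R$ and interior disjoint from $R$, so it lifts canonically to a properly embedded disk in $Y$ with boundary on some $R_i$, producing a compressing disk for $R_i$ in $Y$ and contradicting the hypothesis.

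Now let $S \subset Y'$ be an embedded $2$-sphere, isotoped to be transverse to $R$ and chosen so that the number $|S \cap R|$ of intersection circles is minimal within its isotopy class. If $S \cap R = \emptyset$, then $S$ lies in $Y' \setminus R$, which is identified with $Y \setminus (R_1 \cup R_2)$, and irreducibility of $Y$ gives a $3$-ball bounded by $S$. Otherwise I choose an innermost disk $D \subset S$ with $D \cap R = \partial D$; incompressibility of $R$ forces $\partial D$ to be inessential on $R$, so it bounds a disk $E \subset R$, which I take to be innermost, meaning that $\operatorname{int}(E) \cap S = \emptyset$. Then $D \cup E$ is an embedded sphere in $Y'$; lifting it to $\tilde D \cup E_i \subset Y$, where $i$ is determined by the side of $R$ on which $D$ sits, and invoking irreducibility of $Y$, produces a $3$-ball $B \subset Y$ with $\partial B = \tilde D \cup E_i$.

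The crucial verification is that $B$ descends to a $3$-ball in $Y'$ bounded by $D \cup E$. Interior points of $B$ as a manifold cannot lie on $\partial Y$, so $B \cap \partial Y \subseteq \partial B \cap \partial Y = E_i$; in particular $B \cap R_j = \emptyset$ for the other index $j$, and the projection $Y \to Y'$ embeds $B$ as a ball in $Y'$ with boundary $D \cup E$. Writing $S' = \overline{S \setminus D}$, a local check near $\partial D$ combined with connectedness of $\operatorname{int}(S')$ and the innermost choice of $E$ shows that $\operatorname{int}(S')$ is disjoint from the interior of this ball. Pushing $D$ across the ball to $E$ and then slightly off $R$ produces a sphere isotopic to $S$ with one fewer intersection circle, contradicting minimality. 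The main obstacle is arranging the innermost choices so that both (a) the ball $B \subset Y$ embeds in $Y'$ and (b) the interior of that ball contains no other part of $S$, so that the isotopy reducing $|S \cap R|$ is genuinely admissible; both are achieved by the innermost choices and the local picture near $\partial D$ described above.
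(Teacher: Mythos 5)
Your overall strategy (minimize $|S\cap R|$ in the isotopy class of a sphere $S$ and remove intersection circles by an isotopy) is not the paper's, and as written it has a genuine gap at the double-innermost step. You first choose $D\subset S$ innermost on $S$, so that $\operatorname{int}(D)\cap R=\emptyset$ and $D\cup E$ lifts to $Y$; incompressibility then says only that $\partial D$ bounds \emph{some} disk $E\subset R$. You then assert that $E$ can "be taken innermost, meaning $\operatorname{int}(E)\cap S=\emptyset$". But when $R$ has positive genus (in the paper's application $R$ is a torus), the disk in $R$ bounded by $\partial D$ is unique, so there is no freedom in the choice: either its interior misses $S$ or it does not, and in general it does not --- other circles of $S\cap R$ can lie inside it. Conversely, a circle of $S\cap R$ that is innermost on $R$ need not bound a disk on $S$ whose interior misses $R$, so the sphere it produces need not lift to $Y$ and irreducibility of $Y$ cannot be invoked for it. Thus you cannot in general arrange simultaneously that $D$ is innermost on $S$ and that $\operatorname{int}(E)\cap S=\emptyset$ with $\partial E=\partial D$. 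Without the second condition your "crucial verification" fails: sheets of $S\setminus D$ can enter the ball $B$ through $\operatorname{int}(E)$, and pushing $D$ across $B$ then drags those sheets across $R$, so the move is not guaranteed to reduce $|S\cap R|$; your connectedness argument for $\operatorname{int}(S')\cap\operatorname{int}(B)=\emptyset$ uses exactly the unavailable hypothesis $\operatorname{int}(E)\cap S=\emptyset$.

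This is precisely why the paper argues differently (Lemma \ref{lem_disjoint_reducing_sphere}): it minimizes $|S\cap \Sigma|$ over \emph{all reducing spheres}, not within an isotopy class, takes a disk $D$ in the surface that is innermost \emph{on the surface} (its existence follows from incompressibility applied to an innermost-on-$S$ circle and then passing to an innermost circle inside the resulting disk), and performs surgery: $\partial D$ splits $S$ into $D_1,D_2$, and at least one of $D\cup D_1$, $D\cup D_2$ is again a reducing sphere with fewer intersection circles, a contradiction. The surgered spheres need not be isotopic to $S$ and need not lie on one side of the surface, so no simultaneous innermost choice and no lifting of $D\cup E$ to $Y$ is ever required; irreducibility of $Y$ is only used at the very end, for a reducing sphere already disjoint from $R$ (and there your observation that a ball in $Y$ with boundary disjoint from $\partial Y$ cannot meet $\partial Y$, hence descends to $Y'$, is correct and is the right way to finish). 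To repair your write-up you would either have to switch to this cut-and-paste induction, or supply a substantially more careful argument for the isotopy reduction; as it stands, the step is unjustified.
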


\begin{proof}
	Let $R'\subset Y'$ be the image of $R_1$ and $R_2$ in $Y'$, then $R'$ is incompressible. Suppose $Y'$ is reducible, applying Lemma \ref{lem_disjoint_reducing_sphere} with $L=\emptyset$, there exists a reducing sphere $S\subset Y'$ that is disjoint from $R'$. The pre-image of $S$ in $Y$ is then a reducing sphere of $Y$, contradicting the irreducibility of $Y$.
\end{proof}

Suppose $Y$ is a connected 3-manifold and $\Sigma \subset Y$ is a properly embedded connected surface. We say that $\Sigma$ is \emph{non-separating}, if $Y-\Sigma$ is connected. Otherwise, we say $\Sigma$ is \emph{separating}.

\begin{Lemma}
\label{lem_irreducible_after_excision_along_Sigma}
Suppose $Y$ is a connected closed 3-manifold and $L\subset Y$ is a link. Suppose $\Sigma$ is $L$--incompressible and non-separating in $Y$. Let $\varphi:\Sigma\to\Sigma$ be an orientation-preserving diffeomorphism of $\Sigma$ that maps $\Sigma\cap L$ to $\Sigma\cap L$, and let $Y'$ be the closed 3-manifold obtained by cutting $Y$ open along $\Sigma$ and gluing back via $\varphi$, let $L'$ be the image of $L$ in $Y'$. If $Y-L$ is irreducible, then $Y'-L'$ is also irreducible.
\end{Lemma}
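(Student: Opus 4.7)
The plan is to argue by contradiction, exploiting the canonical identification $Y-\Sigma=Y'-\Sigma$ (both equal the interior of the cut-open manifold $Y_0$) together with the hypothesis that $\varphi$ preserves $\Sigma\cap L$ setwise, which gives $L\cap\Sigma=L'\cap\Sigma$ as subsets of $\Sigma$ and hence $Y-L-\Sigma=Y'-L'-\Sigma$. I will first check that $\Sigma$ remains $L'$-incompressible in $Y'$, then use Lemma \ref{lem_disjoint_reducing_sphere} to push any hypothetical reducing sphere of $Y'-L'$ off $\Sigma$, and finally transfer it to $Y-L$ where irreducibility forces a contradiction.

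To establish $L'$-incompressibility, suppose $D'\subset Y'-L'$ were an $L'$-compressing disk for $\Sigma$. After putting $D'$ transverse to $\Sigma$ and running a standard innermost-disk argument on $D'\cap\Sigma$ inside $D'$, I reduce to the case $D'\cap\Sigma=\partial D'$. Then the interior of $D'$ lies in $Y'-L'-\Sigma=Y-L-\Sigma$ and $\partial D'\subset\Sigma-L$, so $D'$ becomes an $L$-compressing disk for $\Sigma$ in $Y-L$, contradicting the hypothesis.

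Now suppose $Y'-L'$ is reducible. By the previous step and Lemma \ref{lem_disjoint_reducing_sphere}, there exists a reducing sphere $S'\subset Y'-L'$ disjoint from $\Sigma$. Via the identification $Y'-\Sigma=Y-\Sigma$, the sphere $S'$ embeds in $Y-L$ and, by irreducibility, bounds a ball $B\subset Y-L$. It then suffices to show $B\cap\Sigma=\emptyset$, for then $B\subset Y'-L'$ bounds $S'$ there, a contradiction. Since $\partial B=S'$ is disjoint from $\Sigma$ and $B$ is disjoint from $L$, the intersection $B\cap(\Sigma-L)$ is a compact subsurface of $\Sigma-L$ without boundary.

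The crux, and what I expect to be the main obstacle, is the two-case analysis showing $B\cap\Sigma=\emptyset$. If $L\cap\Sigma\neq\emptyset$, then $\Sigma-L$ is a connected non-compact surface and contains no non-empty closed subsurface, so $B\cap\Sigma=\emptyset$ for free. If $L\cap\Sigma=\emptyset$, then the non-separating hypothesis combined with the irreducibility of $Y-L$ forces $\Sigma$ to have positive genus (otherwise $\Sigma$ would be a non-separating sphere in an irreducible manifold), and $B\cap\Sigma$ is either empty or all of $\Sigma$; the latter would embed $\Sigma$ into the ball $B$, making the composition $\pi_1(\Sigma)\hookrightarrow\pi_1(Y-L)$ (injective by $L$-incompressibility since $\Sigma-L=\Sigma$) factor through $\pi_1(B)=1$, forcing $\pi_1(\Sigma)=1$ and contradicting positive genus.
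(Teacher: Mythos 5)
Your proposal is correct and follows the paper's proof: you transfer $L$--incompressibility to $(Y',L')$, invoke Lemma \ref{lem_disjoint_reducing_sphere} to find a reducing sphere of $Y'-L'$ disjoint from $\Sigma'$, pull it back to a ball $B\subset Y-L$ using irreducibility, and rule out $\Sigma\subset B$. The only (harmless) deviation is the very last step: the paper contradicts the non-separating hypothesis directly (a closed surface inside a ball is null-homologous, hence separating), whereas you argue by cases, using $B\cap L=\emptyset$ when $\Sigma\cap L\neq\emptyset$ and $\pi_1$--injectivity plus positive genus when $\Sigma\cap L=\emptyset$; both routes are valid.
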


\begin{proof}
Since $\Sigma$ is $L$--incompressible in $Y$, the surface $\Sigma'$ is $L'$--incompressible in $Y'$. Suppose $Y'-L'$ is reducible, then by Lemma \ref{lem_disjoint_reducing_sphere}, there is a reducing sphere $S'$ of $Y'-L'$ that is disjoint from $\Sigma'$. By the irreducibility of $Y-L$, the pre-image of $S'$ in $Y$ bounds a 3-ball $B$ in $Y-L$. Since $S'$ does not bound a ball in $Y'-L'$, we have $\Sigma\subset B$, which contradicts the the assumption that $\Sigma$ is non-separating.
\end{proof}

\begin{Lemma}
\label{lem_incompressibility_of_boundary_N(L)}
Suppose $Y$ is a connected closed 3-manifold, and $L\subset Y$ is a non-empty link such that $Y-L$ is irreducible.
 Let $N(L)$ the open tubular neighborhood of $L$. Furthermore, if $L$ has only one component, we assume that $Y-N(L)$ is not diffeomorphic to $S^1\times D^2$. Then the boundary of the tubular neighborhood of $\partial N(L)$ is incompressible in $Y-L$.
\end{Lemma}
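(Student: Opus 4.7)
The plan is to work inside the compact manifold $M := Y \setminus N(L)$, which is a deformation retract of $Y-L$ and whose boundary $\partial M = \partial N(L)$ is a disjoint union of tori, one for each component of $L$. Since incompressibility is a $\pi_1$-statement, it suffices to prove that $\partial M$ is incompressible in $M$. My strategy has two parts: first, show that the hypothesis ``$Y-L$ is irreducible'' upgrades to ``$M$ is irreducible''; second, assume some torus $T_0 \subset \partial M$ is compressible and derive a contradiction via the loop theorem together with irreducibility of $M$.

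For the irreducibility of $M$, let $\Sigma \subset \mathrm{int}(M)$ be an embedded $2$-sphere. It bounds a ball $B$ in $Y-L$ by assumption, and I need to verify $B \subset M$. Since $\partial B$ is disjoint from $\partial N(L)$, the intersection $B \cap \partial N(K)$ for each component $K$ of $L$ is a closed $2$-submanifold of the torus $\partial N(K)$, hence either empty or the full torus. In the latter case, Alexander's theorem inside the ball $B$ shows that $\partial N(K)$ bounds a solid torus $V \subset B$; since $B$ is disjoint from $L$, this $V$ must coincide with the exterior $Y \setminus N(K)$. This forces $Y \setminus N(K)$ to be a solid torus and the remaining components $L \setminus K$ to lie inside $V \subset B$, contradicting $B \cap L = \emptyset$ when $L$ has more than one component and contradicting the hypothesis that $Y \setminus N(L) \not\cong S^1 \times D^2$ when $L$ is a knot.

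For the second part, suppose $T_0 \subset \partial M$ is compressible in $M$. The loop theorem produces an embedded disk $D \subset M$ with $\partial D \subset T_0$ essential. Take a regular neighborhood $R_1$ of $T_0 \cup D$ in $M$. A handle-attachment calculation shows $R_1 \cong (S^1 \times D^2) \setminus B^3$, with $\partial R_1$ equal to $T_0$ together with a $2$-sphere $S$ obtained by compressing $T_0$ along $D$; in particular $\pi_1(R_1) \cong \bZ$, so $R_1$ is not a $3$-ball. Pushing $S$ slightly into $\mathrm{int}(M)$ and applying irreducibility of $M$, the sphere $S$ must bound a $3$-ball in $M$; since $R_1$ is not a ball, this $3$-ball lies on the side of $S$ opposite to $R_1$. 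If $L$ has more than one component, that opposite region contains additional boundary tori of $M$, which is absurd for a $3$-ball. If $L$ is a single knot, then the two pieces glue to give $M = R_1 \cup B^3 = S^1 \times D^2$, contradicting the hypothesis.

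The main obstacle is the irreducibility step for $M$: one must carefully rule out the scenario in which a ball in $Y-L$ swallows an entire torus $\partial N(K)$. This is exactly where the non-emptiness of $L$, the special hypothesis when $L$ is a knot, and Alexander's theorem all come in. Once irreducibility of $M$ is in hand, the rest of the argument is a fairly standard application of the loop theorem and the structure of torus compressions.
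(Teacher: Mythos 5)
Your second part (the compression argument) is sound and is essentially the paper's own proof transplanted into the compact exterior $M$: compress $T_0$, take the regular neighborhood $R_1$ of $T_0\cup D$, observe that its frontier in $M$ is the single sphere $S$ (so $R_1$ is one entire side of $S$), and use irreducibility to force the other side $\overline{M\setminus R_1}$ to be a ball, which is absurd when $\partial M$ has further tori and gives $M\cong S^1\times D^2$ when $L$ is a knot. The paper sidesteps your Part 1 entirely by taking a regular neighborhood $R$ of $N(K)\cup D$ inside $Y$, so that the resulting sphere $\partial R$ lies in $Y-L$ and the irreducibility of $Y-L$ can be quoted directly, with no need to first prove that the compact exterior is irreducible.

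The genuine gap is in Part 1, in the sentence ``Alexander's theorem inside the ball $B$ shows that $\partial N(K)$ bounds a solid torus $V\subset B$.'' Alexander's theorem says an embedded torus in $S^3$ bounds a solid torus on at least one side, but it does not say that this side is the one contained in $B$ (the side not meeting $\partial B$), and that claim is false in general: take a knotted circle $J\subset S^3$, a tubular neighborhood $N(J)$, and a small ball $B'\subset \mathrm{int}\,N(J)$; then $B=S^3\setminus \mathrm{int}(B')$ is a ball containing $T=\partial N(J)$ in its interior, the component of $B\setminus T$ not meeting $\partial B$ is the knot exterior of $J$ (not a solid torus), and the other component is a solid torus minus a ball. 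In your setting, what is actually forced (by a connectedness/frontier argument, not by Alexander) is that the inner region is the whole exterior $\overline{Y\setminus N(K)}$; this already yields the contradiction in the multi-component case, but in the one-component case your conclusion ``this forces $Y\setminus N(K)$ to be a solid torus'' rests exactly on the unjustified solid-torus claim, so that case is not proved as written. The repair is easy and needs no Alexander: if $\partial N(K)\subset \mathrm{int}(B)$ and $B\cap L=\emptyset$, then $\overline{Y\setminus N(K)}\subset B$, hence $Y\setminus B\subset N(K)$ and therefore $\Sigma=\partial B\subset \overline{N(K)}$, contradicting $\Sigma\subset \mathrm{int}(M)$. (Alternatively, simply quote the standard fact that $Y-L$ is irreducible if and only if the compact exterior $Y\setminus N(L)$ is.) With this patch your argument is complete.
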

\begin{proof}
	If $K$ is a component of $L$, let $N(K)$ be the corresponding component of $N(L)$. Suppose $\partial N(L)$ is compressible in $Y-L$, then there is a component $K$ of $L$ and a closed disk $D\subset Y-L-N(K)$, such that $D\cap \partial N(K)=\partial D$, and $\partial D$ is essential on $\partial N(K)$. Let $R$ be a regular neighborhood of $N(K)\cup D$, then $\partial R\cong S^2$. By the irreducibility of $Y-L$, $\partial R$ bounds a ball in $Y-L$. Since $Y$ is assumed to be connected, we conclude that $K$ is the only component of $L$, and $Y-N(K)$ is diffeomorphic to $S^1\times D^2$, which contradict the assumptions.
\end{proof}

\begin{Lemma}
\label{lem_irreducible_after_adding_parallel_copy}
Suppose $Y$ is a connected closed 3-manifold, and let $L\subset Y$ be a link such that $Y-L$ is irreducible.
 Suppose $L$ has at least two components. Let $K$ be a component of $L$, let $L'\subset Y$ be obtained from $L$ by adding a parallel copy of $K$ using an arbitrary framing of $L$. If $Y-L$ is irreducible, then $Y-L'$ is also irreducible.
\end{Lemma}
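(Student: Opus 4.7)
The plan is to argue by contradiction, using the incompressibility of $\partial N(L)$ provided by Lemma \ref{lem_incompressibility_of_boundary_N(L)}. Suppose $S\subset Y-L'$ is an embedded sphere. Since $Y-L$ is irreducible and $S\subset Y-L$, the sphere $S$ bounds a ball $B\subset Y-L$. If $K'\cap B=\emptyset$, then $B\subset Y-L'$ and we are done; otherwise $K'\subset B$, and we must derive a contradiction, since then $S$ would witness the reducibility of $Y-L'$.

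The main tool is the parallelism of $K'$ and $K$: there exists an embedded annulus $A\subset \overline{N(K)}$ with $\partial A=K\sqcup K'$ and with interior disjoint from $L'$. Since $K\subset L$ lies outside $B$ while $K'\subset B$, any arc in $A$ from $K$ to $K'$ must cross $\partial B=S$. I would choose the pair $(S,B)$ among all candidates satisfying the assumptions above and with $S\pitchfork A$ so that the number of components of $S\cap A$ is minimal.

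The key reduction is a standard innermost-disk argument showing that $S\cap A$ contains no circles null-homotopic in $A$. If $c\subset S\cap A$ were an innermost such circle, bounding a disk $D\subset A$ whose interior misses $S$, then surgery on $S$ along $D$ would yield two spheres $\Sigma_1,\Sigma_2\subset Y-L'$, each bounding a ball in $Y-L$ by irreducibility; since $K'\subset B$ and $K'$ is disjoint from $D$, exactly one of these balls still contains $K'$, and after a small push-off it would have strictly fewer intersections with $A$, contradicting minimality. So $S\cap A$ consists only of circles essential in $A$ (parallel to the core), and at least one such circle must exist because $K$ and $K'$ lie on opposite sides of $S$.

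Finally, let $c_1$ be the essential circle of $S\cap A$ closest to $K'$, so that $c_1$ and $K'$ cobound a sub-annulus $A_1\subset A$ with $A_1\cap S=c_1$. Capping $c_1$ off by one of the two disks $D_1\subset S$ it bounds, the union $A_1\cup D_1$ is an embedded disk in $Y-L$ with boundary $K'$. However, $K'$ is a longitudinal curve on the torus $\partial N(K)\subset\partial N(L)$, hence essential on $\partial N(L)$; since $L$ has at least two components, the hypothesis of Lemma \ref{lem_incompressibility_of_boundary_N(L)} is satisfied, so $\partial N(L)$ is incompressible in $Y-L$, forcing $[K']\neq 0$ in $\pi_1(Y-L)$. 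This contradicts the existence of the disk $A_1\cup D_1$, and completes the argument. The main point requiring care will be the bookkeeping in the innermost-disk surgery step, namely verifying that after surgery exactly one of the resulting spheres still bounds a ball containing $K'$, with strictly fewer intersections with $A$.
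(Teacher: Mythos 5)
Your route is genuinely different from the paper's. The paper never touches spheres directly: it writes $Y-L'$ as the result of removing $N(K)-K$ from $Y-L$ and gluing in $S^1\times(D^2-\{p_1,p_2\})$, observes that both pieces are irreducible with incompressible torus boundary (Lemma \ref{lem_incompressibility_of_boundary_N(L)} for the first piece), and quotes the gluing statement Lemma \ref{lem_irreducible_after_gluing_along_incompressible_boundary}. You instead run a direct innermost-circle argument with the annulus of parallelism $A$ between $K$ and $K'$, and convert the contradiction into a null-homotopy of a longitude of $\partial N(K)$, killed by the $\pi_1$-injectivity form of Lemma \ref{lem_incompressibility_of_boundary_N(L)}. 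Both arguments use that lemma (hence the two-component hypothesis) in an essential way; yours avoids the gluing lemma at the cost of re-doing a cut-and-paste argument by hand, while the paper's is a two-line reduction to already-established statements. Your endgame (the subannulus $A_1$ capped by $D_1\subset S$, with $A_1\cap S=c_1$ forcing $A_1\cap D_1=c_1$) is fine.

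The one step that is not justified as written is exactly the one you flagged: ``since $K'\subset B$ and $K'$ is disjoint from $D$, exactly one of these balls still contains $K'$.'' This is immediate only when $\mathrm{int}(D)\subset\mathrm{int}(B)$, for then $D$ is a properly embedded disk in the ball $B$, splitting it into balls $B_1,B_2$ bounded by $\Sigma_1,\Sigma_2$, and $K'$ lies in exactly one of them (and the other side of that $\Sigma_i$ contains $L$, so its ball in $Y-L$ is forced to be $B_i$). But since $\mathrm{int}(D)\cap S=\emptyset$, the other case $\mathrm{int}(D)\cap B=\emptyset$ also occurs, and there the balls bounded by $\Sigma_1,\Sigma_2$ in $Y-L$ are not pieces of $B$ cut by $D$, so the sentence quoted above does not apply; what you need (and all you need) is that \emph{at least one} of $\Sigma_1,\Sigma_2$ fails to bound a ball in $Y-L'$. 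To close this: suppose both $\Sigma_i$ bound balls $Q_i\subset Y-L'$. Since $K'\subset\mathrm{int}(B)$ and $Q_i$ is disjoint from $K'$, each $Q_i$ is the closure of the complementary component of $\Sigma_i$ not meeting $\mathrm{int}(B)$, hence disjoint from the interior of a small regular neighborhood $N(S\cup D)$, whose boundary consists of three spheres: a push-off $S'$ of $S$ into $\mathrm{int}(B)$ together with $\Sigma_1\sqcup\Sigma_2$. One checks $Q_1\cap Q_2=\emptyset$, so $P:=N(S\cup D)\cup Q_1\cup Q_2$ is a three-holed $S^3$ with two boundary spheres capped by balls, i.e.\ a ball with $\partial P=S'$, disjoint from $L'$. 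Since $K'\notin P$, the complementary ball of $S'$ containing $K'$ would have to contain $L$ as well, i.e.\ $L\subset B$, contradicting $B\subset Y-L$ and $L\neq\emptyset$. (In the first case one of the $\Sigma_i$ is bad outright.) Either way some $\Sigma_i$ is a bad sphere meeting $A$ in strictly fewer circles, which is the contradiction with minimality you wanted. With this bookkeeping supplied, your proof is correct; note the paper makes a cognate assertion without proof in Lemma \ref{lem_disjoint_reducing_sphere} (``at least one of $S_1$ and $S_2$ is a reducing sphere''), but there the badness notion is simply ``bounds no ball,'' whereas yours is ``the ball contains $K'$,'' which is why the extra argument above is needed.
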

\begin{proof}
	Let $N(K)$ be a tubular neighborhood of $K$, then $Y-L'$ is obtained from $Y-L$ by removing $N(K)-K$ and gluing back a copy of $S^1\times (D^2-\{p_1,p_2\})$. By the assumptions, $Y-L-N(K)$ is irreducible. By Lemma \ref{lem_incompressibility_of_boundary_N(L)}, $\partial N(K)$ is incompressible in $Y-L-N(K)$. Notice that $S^1\times (D^2-\{p_1,p_2\})$ is an irreducible manifold with incompressible boundary. 
	 Therefore by Lemma \ref{lem_irreducible_after_gluing_along_incompressible_boundary}, $Y-L'$ is also irreducible.
\end{proof}

\begin{Lemma}
\label{lem_irreducibility_Y-L-m}
	Suppose $Y$ is a connected closed 3-manifold, and $L\subset Y$ is a non-empty link. Moreover, if $Y\cong S^1\times S^2$, we assume that $L$ is not isotopic to the knot given by $S^1\times \{\pt\}$. Let $K$ be a component of $L$, let $m\subset Y$ be a meridian of $K$. If $Y-L$ is irreducible, then $Y-L-m$ is also irreducible.
\end{Lemma}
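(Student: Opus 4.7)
The plan is to show that every embedded $2$-sphere $\Sigma\subset Y-L-m$ bounds a ball in $Y-L-m$. The crucial preliminary step is to prove that the meridian $m$ does not bound an embedded disk in $Y-L$; given this, the conclusion follows by a standard innermost-disk reduction using the meridional disk of $K$.

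To establish the preliminary step I distinguish two subcases. If $L$ has more than one component, or $L=\{K\}$ but $Y-N(K)\not\cong S^1\times D^2$, then Lemma \ref{lem_incompressibility_of_boundary_N(L)} gives that $\partial N(K)$ is incompressible in $Y-L$, so the meridian $m$, being isotopic to an essential simple closed curve on $\partial N(K)$, is nontrivial in $\pi_1(Y-L)$ and hence bounds no disk. Otherwise $L=\{K\}$ and $Y$ is a union of two solid tori, i.e.\ a lens space. Since $\pi_1(Y-K)\cong\pi_1(Y-N(K))\cong\mathbb{Z}$, the meridian $m$ is null-homotopic in $Y-K$ exactly when the Heegaard gluing identifies the two meridians, and by the uniqueness of genus-one Heegaard splittings this forces $Y\cong S^1\times S^2$ and $K$ isotopic to $S^1\times\{\pt\}$, the case excluded by hypothesis.

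For the main step, let $\Sigma\subset Y-L-m$ be an embedded sphere. By irreducibility of $Y-L$, $\Sigma$ bounds a ball $B\subset Y-L$, and it suffices to show $m\not\subset B$. Assume for contradiction $m\subset B$, and let $D$ be a meridional disk for $K$ with $\partial D=m$ and $D\cap L$ equal to one transverse point on $K$, with $D\cap(L-K)=\emptyset$. After putting $D$ and $\Sigma$ in general position, $D\cap\Sigma$ is a finite disjoint union of circles in $\mathrm{int}(D)$. Pick $c\subset D\cap\Sigma$ innermost on $\Sigma$, so that $c$ bounds a disk $\sigma\subset\Sigma\subset Y-L$ with $\mathrm{int}(\sigma)\cap D=\emptyset$; the curve $c$ separates $D$ into a sub-disk $\delta_c$ and an annulus $A_c$ with $\partial A_c=c\cup m$. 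If $D\cap K\in\delta_c$, then $A_c\subset Y-L$ and $A_c\cup_c\sigma$ is an embedded disk in $Y-L$ with boundary $m$, contradicting the preliminary step. If instead $D\cap K\in A_c$, then $\delta_c\subset Y-L$ and $\delta_c\cup_c\sigma$ is an embedded sphere in $Y-L$; by irreducibility it bounds a ball $B'\subset Y-L$, and pushing $\delta_c$ across $B'$ to a small push-off of $\sigma$ yields a new meridional disk with strictly smaller $|D\cap\Sigma|$ while preserving $\partial D=m$ and the $K$-intersection. Iterating, we either eventually reach the first case (contradiction) or arrive at $D\cap\Sigma=\emptyset$; in the latter situation, the connectedness of $D$ together with $\partial D=m\subset B$ forces $D\subset B$, contradicting $D\cap K\neq\emptyset$ and $B\cap K=\emptyset$.

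The main technical obstacle is the second subcase of the preliminary step, where Lemma \ref{lem_incompressibility_of_boundary_N(L)} does not apply; it is precisely there that we must invoke the explicit exclusion of $(Y,L)=(S^1\times S^2, S^1\times\{\pt\})$ in the hypothesis together with the uniqueness of the genus-one Heegaard splitting of $S^1\times S^2$ to rule out a disk bounded by $m$ in $Y-L$.
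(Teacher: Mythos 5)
Your proof is correct, and the preliminary step coincides with the paper's own dichotomy: both invoke Lemma \ref{lem_incompressibility_of_boundary_N(L)} when it applies, and in the leftover case ($L=K$ with $Y-N(K)\cong S^1\times D^2$) both rule out $m$ bounding a disk by observing this would force $(Y,K)\cong (S^1\times S^2, S^1\times\{\pt\})$, the excluded case; your appeal to uniqueness of the genus-one Heegaard splitting to get $K$ isotopic to $S^1\times\{\pt\}$ is legitimate and is implicitly what the paper uses too. Where you genuinely diverge is the main step: the paper isotopes $m$ into $N(K)$, writes $Y-L-m$ as the union of $N(K)-K-m$ and $Y-L-N(K)$ along the incompressible torus $\partial N(K)$, and quotes Lemma \ref{lem_irreducible_after_gluing_along_incompressible_boundary}, whereas you take an arbitrary sphere $\Sigma\subset Y-L-m$, cap it with a ball $B\subset Y-L$, and exclude $m\subset B$ via an innermost-circle argument with a meridian disk. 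Your route is more elementary (no gluing lemma), but it is longer than it needs to be: your preliminary step in fact shows $m$ is non-trivial in $\pi_1(Y-L)$ in both subcases, and a non-trivial loop cannot lie in a ball $B\subset Y-L$, so the contradiction from $m\subset B$ is immediate and the entire meridian-disk iteration can be dropped. If you do keep it, note that in your second case the ball $B'$ is unnecessary and ``pushing $\delta_c$ across $B'$'' is slightly delicate when other sheets of $D$ lie inside $B'$; the clean move is simply to replace $\delta_c$ by a small push-off of $\sigma$, which already strictly decreases $|D\cap\Sigma|$.
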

\begin{proof}
	Let $N(K)\subset Y$ be an open tubular neighborhood of $K$ such that $\overline{N(K)}\cap L=K$. We discuss two cases.
	
	If $\partial N(K)$ is incompressible in $Y-L$, we isotope $m$ such that $m\subset N(K)$, then $N(K)-K-m$ is irreducible with incompressible boundary. By the assumptions, $Y-L-N(K)$ is irreducible. Therefore by Lemma \ref{lem_irreducible_after_gluing_along_incompressible_boundary}, 
	$$Y-L-m= \big(N(K)-K-m\big) \cup_{\partial N(K)} \big(Y-L-N(K)\big)$$
	is irreducible.
	
	If $\partial N(K)$ is compressible in $Y-L$, then by Lemma \ref{lem_incompressibility_of_boundary_N(L)}, $L$ has one component, and $Y-N(K)$ is diffeomorphic to $S^1\times D^2$. Since $S^1\times D^2$ is irreducible, $Y-L-m$ is irreducible as long as $m$ is not contractible in $Y-L$. If $m$ is contractible in $Y-L$, then $Y\cong S^1\times S^2$ and $L$ is  isotopic to the knot given by $S^1\times \{\pt\}$, which contradicts the assumptions.
\end{proof}

\subsection{Excisions and the generalized Thurston norm}
\label{subsec_excision_Thurston}

The following lemma was explained to the authors by Maggie Miller.
\begin{Lemma}[\cite{Miller_private_communication}]
\label{lem_norm_minimizing_after_excision}
Let $Y$ be a closed 3-manifold, let $\Sigma\subset Y$ be a closed, connected, norm-minimizing surface, and let $\varphi:\Sigma\to\Sigma$ be an orientation-preserving diffeomorphism of $\Sigma$. Suppose $Y'$ is the closed 3-manifold obtained by cutting $Y$ open along $\Sigma$ and gluing back via $\varphi$, and suppose $\Sigma'$ is the image of $\Sigma$ in $Y'$, then $\Sigma'$ is norm-minimizing in $Y'$.
\end{Lemma}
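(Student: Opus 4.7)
I would argue by contradiction: suppose there is a properly embedded oriented surface $F' \subset Y'$ with $[F'] = [\Sigma']$ in $H_2(Y';\bZ)$ and $x(F') < x(\Sigma')$. The goal is to derive a contradiction to the norm-minimality of $\Sigma$ in $Y$. The critical observation is that $Y$ and $Y'$ share a common cut-open manifold $Y_0 := Y \setminus \nu(\Sigma) = Y' \setminus \nu(\Sigma')$, a compact 3-manifold with two boundary components $\partial_+Y_0$ and $\partial_- Y_0$, each naturally identified with $\Sigma$. The manifold $Y$ is recovered by gluing $\partial_+$ to $\partial_-$ via the identity, while $Y'$ is obtained by gluing via $\varphi$. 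The plan is therefore to characterize the norm-minimality of $\Sigma$ in a way that is intrinsic to $Y_0$ and independent of which gluing diffeomorphism is used to close it up.

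I would exploit Gabai's characterization: a closed oriented surface in an oriented 3-manifold is Thurston-norm minimizing precisely when it appears as a leaf of a taut foliation, equivalently, when the ambient manifold admits a closed 2-form positive on every leaf of such a foliation (Sullivan's criterion). Starting from a taut foliation $\mathcal{F}$ on $Y$ with $\Sigma$ as a leaf together with a calibrating closed 2-form $\omega$, arranged so that both $\mathcal{F}$ and $\omega$ are product-like along a tubular neighborhood $\Sigma \times (-\epsilon,\epsilon)$ of $\Sigma$, I restrict to $Y_0$ and obtain $\mathcal{F}_0$ and $\omega_0$, with $\partial_\pm Y_0$ as leaves. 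To transfer to $Y'$, I modify $\omega_0$ near $\partial_+ Y_0$ so that its boundary value on $\partial_+$ agrees with $\varphi^*(\omega_0|_{\partial_-})$—the condition needed for $\omega_0$ to descend through the gluing. This modification is possible because, using that $\varphi$ is orientation-preserving, the two boundary 2-forms on $\Sigma$ represent the same cohomology class in $H^2(\Sigma;\bR)\cong\bR$ (both integrate to the period $\int_\Sigma \omega$), so their difference is exact, say $d\beta$; adding a correction $-d(f(t)\beta)$ in the collar (with $f$ a cutoff equal to $1$ at $\partial_+$ and $0$ further in) alters the boundary value by $-d\beta$ while preserving closedness. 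Convexity of positive 2-forms in a fixed cohomology class ensures that the modified form $\omega_0'$ remains positive on every horizontal slice of the collar, hence on every leaf of the foliation $\mathcal{F}'$ obtained on $Y'$ by gluing. Sullivan's criterion then certifies $\mathcal{F}'$ as taut on $Y'$, and Gabai's theorem yields the norm-minimality of $\Sigma'$ in $Y'$, producing the desired contradiction.

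The main obstacle I anticipate is arranging the modification of $\omega_0$ near $\partial_+$ to simultaneously (i) match boundary values under pullback by $\varphi$, (ii) preserve closedness, and (iii) preserve leafwise positivity; this three-fold constraint is satisfied thanks to convexity of the positive-form cone and the equality of periods, but the geometry must be set up carefully so that all leaves, not only $\Sigma$, remain calibrated by $\omega'$. A secondary issue is potential reducibility of $Y$, which can obstruct a clean application of Gabai's theorem; this is handled by prime decomposition, since a connected norm-minimizing $\Sigma$ must lie in a single prime summand and the cut-and-reglue operation at $\Sigma$ affects only that summand. Finally, in the degenerate case where $\Sigma$ is separating one has $[\Sigma]=0$ in $H_2(Y;\bZ)$ so $x(\Sigma)=0$, and the conclusion is immediate since any surface in the trivial class has nonnegative norm.
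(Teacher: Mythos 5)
Your proposal is correct and follows essentially the same route as the paper: handle $x(\Sigma)=0$ trivially, reduce to the irreducible case via prime decomposition (the reducing spheres can be taken disjoint from the incompressible norm-minimizing $\Sigma$, and the Thurston norm is additive under connected sum), then invoke Gabai's theorem to realize $\Sigma$ as a compact leaf of a taut foliation and Thurston's result that compact leaves of taut foliations are norm-minimizing. The only substantive difference is that you supply a justification, via Sullivan's calibrating closed $2$-form and a collar correction exploiting convexity and equality of periods, for why tautness survives the cut-and-reglue operation, a point the paper's proof asserts without further comment.
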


\begin{proof}
Without loss of generality, we may assume that $Y$ is connected.

If $x(\Sigma)=x(\Sigma')=0$, then $\Sigma'$ is obviously norm-minimizing. In particular, if $Y\cong S^1\times S^2$, then $x(\Sigma)=0$ and $\Sigma'$ is norm-minimizing.  In the following, we assume that $x(\Sigma)>0$, therefore $\Sigma$ being norm-minimizing implies that it is incompressible. 

If $Y$ is irreducible, then by \cite[Theorem 5.5]{G:Sut-1}, there exists a taut foliation on $Y$ such that $\Sigma$ is a closed leaf. Therefore, there is a taut foliation on $Y'$ such that $\Sigma'$ is a closed leaf. By \cite[Corollary 2]{Thurston1986anorm}, $\Sigma'$ is  norm-minimizing in $Y'$.

In general, we apply induction on the number of components in the prime decomposition of $Y$. If $Y$ is prime, the result follows from the previous arguments. If $Y$ is not prime, then by Lemma \ref{lem_disjoint_reducing_sphere}, there is a reducing sphere $S\subset Y$ such that $S\cap \Sigma=\emptyset$.

If $S$ is separating in $Y$, then $S$ decomposes $Y$ into $Y=Y_1\# Y_2$, and $\Sigma$ is contained in one of the components. Assume $\Sigma$ is contained in the component given by $Y_1$, let $Y_1'$ be the manifold obtained by cutting $Y_1$ open along $\Sigma$ and gluing back via the map $\varphi$, then $Y'\cong Y_1'\# Y_2$. By the induction hypothesis, $\Sigma'$ is norm-minimizing in $Y_1'$. Therefore it is also norm-minimizing in $Y_1'\# Y_2$ by the additivity of Thurston norm under connected sums.

If $S$ is non-separating in $Y$, recall that $\Sigma$ is norm-minimizing and we are assuming $x(\Sigma)>0$, therefore $\Sigma$ is not homologous to $S$. As a consequence, there is an embedded circle in $Y$ that intersects $S$ transversely at one point and is disjoint from $\Sigma$. In this case, $Y$ can be decomposed as $Y=Y_1\,\#\, S^1\times S^2$ such that $\Sigma\subset Y_1$, and the desired result follows from the induction hypothesis on $Y_1$.
\end{proof}

\begin{Lemma}
\label{lem_minimizer_intersect_torus}
Let $Y$ be a compact 3-manifold possibly with boundary, let $L\subset Y$ be a link in the interior of $Y$. Let $T=T_1\cup\cdots\cup T_k\subset Y-L$ be a disjoint union of tori, such that for each $i$, the torus $T_i$ is either an incompressible embedded torus in the interior of $Y-L$ or a component of $\partial Y$. Then given $a\in H_2(Y,\partial Y;\mathbb{Z})$,  there exists a properly embedded surface $(S,\partial S)\subset (Y,\partial Y)$ whose fundamental class equals $a$, such that $S$ is $L$--norm-minimizing,  $S$ and $T$ intersect transversely, and for each $i$, $S\cap T_i$ (possibly being empty) consists of parallel and coherently oriented essential curves on $T_i$, where $S\cap T_i$ is oriented by the orientations of $Y$, $S$ and $T_i$. 
\end{Lemma}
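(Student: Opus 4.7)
The plan is to start with an $L$-norm-minimizing representative of $a$ transverse to $T$, and modify it via cut-and-paste operations along the tori $T_i$ to achieve the desired intersection pattern while preserving $L$-norm-minimality. Concretely, among all oriented properly embedded surfaces $(S,\partial S)\subset(Y,\partial Y)$ transverse to $L\cup T$ with $[S]=a$ and $x_L(S)=x_L(a)$, I would pick one minimizing $|S\cap T|$, the total number of connected components of $S\cap T$, and argue that such a choice already has all the desired properties.

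First, I eliminate inessential intersection circles. Suppose some component $c\subset S\cap T_i$ is null-homotopic in $T_i$; choose $c$ innermost, bounding a disk $D\subset T_i$ whose interior is disjoint from $S$. Since $T_i\subset Y-L$, also $D\subset Y-L$. Perform cut-and-paste on $S$ along $D$: delete a thin annular neighborhood of $c$ in $S$ and glue in two parallel copies of $D$ pushed to opposite sides of $T_i$ within a small collar of $T_i$ that misses the other tori $T_j$. The resulting surface $S'$ represents the same class $a$, and satisfies $\chi(S')=\chi(S)+2$, $|S'\cap L|=|S\cap L|$, and $|S'\cap T|\le|S\cap T|-1$. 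A standard Thurston-norm bookkeeping shows $x_L(S')\le x_L(S)$, possibly after discarding a sphere component of the form $D\cup D'$ (with $D'\subset S-L$ a disk that $c$ bounds on $S$), which is null-homologous. Thus $S'$ is again $L$-norm-minimizing but has strictly fewer intersections with $T$, contradicting the minimality of $|S\cap T|$.

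Second, I eliminate non-coherently oriented pairs. After the first step every component of $S\cap T_i$ is essential in $T_i$. Two disjoint essential simple closed curves on a torus are always parallel as unoriented curves, so the components of $S\cap T_i$ are pairwise parallel and admit a cyclic order on $T_i$. If two adjacent circles $c_j,c_{j+1}\subset S\cap T_i$ are oppositely oriented with respect to the orientation on $T_i$ induced from those of $Y$, $S$, and $T_i$, they cobound an annulus $A\subset T_i$ whose interior is disjoint from $S$. Performing the oriented double-curve sum along $A$ — delete thin annular neighborhoods of $c_j,c_{j+1}$ in $S$ and insert two parallel copies of $A$ pushed to opposite sides of $T_i$ with orientations compatible with $S$ — yields a new surface $S''$ with $[S'']=a$, $\chi(S'')=\chi(S)$, $|S''\cap L|=|S\cap L|$, and $|S''\cap T|=|S\cap T|-2$. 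Thus $S''$ is also $L$-norm-minimizing but has strictly fewer intersections with $T$, again contradicting the minimality of $|S\cap T|$. Hence all intersection circles on each $T_i$ are coherently oriented, completing the proof.

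The main potential obstacle is the Thurston-norm bookkeeping in the first step. The cut-and-paste along $D$ can split a component of $S$ into two, and the newly created components may include spheres whose $\chi_L$ is negative; verifying $x_L(S')\le x_L(S)$ requires carefully tracking $\chi_L$ of each component and identifying any extraneous sphere (or disk) component as null-homologous in $H_2(Y,\partial Y;\bZ)$, so that it can be absorbed or discarded without altering the class $a$. Arranging $S$ to be $L$-incompressible at the outset, via preliminary compressions along $L$-compressing disks (each of which either strictly decreases $x_L$ or only creates sphere components disjoint from $L$), makes the innermost-disk argument go through cleanly; the incompressibility of each $T_i$ in $Y-L$ is precisely what allows an innermost bad curve in $S\cap T_i$ to produce a genuine disk $D$ in $Y-L$ through which the surgery can be carried out.
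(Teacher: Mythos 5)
Your overall strategy is the same as the paper's (take an $L$-norm-minimizing representative transverse to $T$ with the fewest intersection components, kill trivial circles by an innermost-disk compression on $T_i$, and kill incoherently oriented pairs by an annulus exchange), but there is a genuine gap at the decisive point of your second step. You conclude that $S''$ is again $L$-norm-minimizing from the equalities $\chi(S'')=\chi(S)$ and $|S''\cap L|=|S\cap L|$. That inference is not valid: $x_L$ is not a function of the total Euler characteristic and the total number of intersections with $L$, because it is the sum over \emph{components} of $\max\{0,\chi_L\}$. The annulus exchange can change the component structure, and if it splits off a component with $\chi_L<0$ (a sphere meeting $L$ in at most one point, or, in the boundary-torus case, a closed piece of this kind), the truncation allows the $\chi_L$ of the remaining components -- and hence $x_L(S'')$ -- to go up by $2$ even though the total $\chi_L$ is unchanged. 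Ruling this out is exactly where the hypothesis that each interior $T_i$ is incompressible in $Y-L$ (or that $T_i\subset\partial Y$) must be used: a bad sphere component would contain one of the inserted copies of $A$, whose boundary is an essential curve of $T_i$, and capping it off inside the sphere would produce a compressing disk for $T_i$ in $Y-L$. This is precisely the step at which the paper invokes incompressibility; your argument never uses that hypothesis in a substantive way, which is a strong sign the step is incomplete. (Your closing remark attributes the role of incompressibility to the innermost disk in the first step, but that disk lies on $T_i$ itself and needs no incompressibility at all.)

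Two smaller points about your first step. The Thurston-norm bookkeeping for a disk compression does go through, but not for the reason you give: you do not need to discard any sphere component, since a sphere (or a disk disjoint from $L$) contributes $0$ to $x_L$, and a case check on how the compressed component splits shows $x_L(S')\le x_L(S)$ with all components retained. Moreover, the sphere $D\cup D'$ you propose to discard need not be null-homologous in general (think of reducible complements), so discarding it could change the class $a$; fortunately, as just noted, discarding is unnecessary. With the incompressibility argument supplied in the annulus-exchange step, your proof would match the paper's.
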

\begin{remark}
The surface $S$ given by Lemma \ref{lem_minimizer_intersect_torus} is not necessarily connected.
\end{remark}

\begin{proof}
Let $(S,\partial S)\subset (Y,\partial Y)$ be an $L$--norm-minimizing surface with fundamental class equal to $a$ that intersects $T$ transversely. Choose $S$ such that $S\cap T$ has the minimum number of connected components. Given $i$, if $S\cap T_i$ contains a trivial circle in $T_i$, let $c\subset S\cap T_i$ be an innermost circle, then $c$ bounds a closed disk $D$ on $T_i$ such that $D\cap S=c$. Compressing $S$ along $D$ yields a surface $S'$ such that $S'$ is homologous to $S$, $x_L(S')\le x_L(S)$, and $S'$ intersects $T$ transversely at fewer components, which is contradictory to the definition of $S$. 
Therefore $S\cap T_i$ consists of parallel essential curves on $T_i$. 

If the curves in $S\cap T_i$ are not all coherently oriented, then there exists a pair of  circles in $S\cap T_i$ that are oriented reversely and bound a closed annulus $A\subset T_i$ such that $A\cap S=\partial A$. 
Compressing $S$ along $A$ yields a surface that is homologous to $S$ and intersects $T$ at fewer components. Since $T_i$ is either incompressible in $Y-L$ or is a boundary component of $Y$, the resulting surface does not increase the generalized Thurston norm with respect to $L$, which is contradictory to the definition of $S$. In conclusion, $S\cap T_i$  consists of parallel and coherently oriented essential curves on $T_i$.
\end{proof}

\begin{Lemma}
\label{lem_norm_minimzing_after_gluing_torus_boundary}
Let $Y$ be a compact 3-manifold with boundary, and let $L$ be a link in the interior of $Y$. Suppose $T_1,T_2$ are two distinct torus components of $\partial Y$ that are both incompressible in $Y-L$. Suppose $(S,\partial S)\subset (Y,\partial Y)$ is $L$--norm-minimizing such that for $i=1,2$, $\partial S\cap T_i$  consists of a (possibly empty) family of parallel, coherently oriented, essential circles, and suppose $\partial S\cap T_1$ and $\partial S\cap T_2$ have the same number of components. 
Let $\varphi:T_1\to T_2$ be an orientation-reversing diffeomorphism such that $\varphi(S\cap T_1)= S\cap T_2$, and that $\varphi|_{S\cap T_1}$ is orientation-preserving with respect to the boundary orientation of $\partial S$. Let $Y'=Y/\varphi$, let $S'$ be the image of $S$ in $Y'$, and let $L'$ be the image of $L$ in $Y'$. Then $S'$ is $L'$--norm-minimizing.
\end{Lemma}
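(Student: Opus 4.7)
The plan is to exhibit $S'$ as a leaf of a taut foliation on $Y' - N(L')$, mirroring the strategy used in the proof of Lemma \ref{lem_norm_minimizing_after_excision}. Once such a foliation is constructed, Thurston's theorem \cite{Thurston1986anorm}, extended to the generalized Thurston norm $x_{L'}$, will force $S'$ to be $L'$--norm-minimizing in its homology class.

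The first step is to construct a taut foliation $\mathcal{F}$ on $Y - N(L)$ having $S - N(L)$ as a leaf. We regard $Y - N(L)$ as a sutured manifold with meridional sutures on $\partial N(L)$ and with sutures on $T_1, T_2$ chosen parallel to $\partial S \cap T_1$ and $\partial S \cap T_2$ respectively. The incompressibility of $T_1, T_2$ in $Y - L$ makes this sutured manifold taut. The $L$--norm-minimality of $S$, together with the hypothesis that $\partial S \cap T_i$ consists of parallel coherently oriented essential circles, then allows us to invoke the sutured version of Gabai's theorem \cite[Theorem 5.5]{G:Sut-1} (adapted to the generalized Thurston norm $x_L$ using Scharlemann's framework \cite{Schar}) to produce a taut foliation $\mathcal{F}$ on $Y - N(L)$ such that $S - N(L)$ is a leaf and such that $\mathcal{F}|_{T_i}$ is a foliation of $T_i$ by circles parallel to $\partial S \cap T_i$.

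Next we glue $\mathcal{F}$ across $\varphi$. Because $\varphi$ is orientation-reversing on the tori and orientation-preserving on the circles $\partial S \cap T_1 \to \partial S \cap T_2$ (with respect to the boundary orientation of $S$), it is compatible with the transverse orientations of the boundary foliations $\mathcal{F}|_{T_1}$ and $\mathcal{F}|_{T_2}$. After a small isotopy of $\mathcal{F}$ supported in collar neighborhoods of $T_1, T_2$ and leaving $S - N(L)$ untouched, the two boundary foliations are carried onto each other by $\varphi$. Hence $\mathcal{F}$ descends to a taut foliation $\mathcal{F}'$ on $Y' - N(L')$ with $S' - N(L')$ as a leaf, and Thurston's theorem applied to $\mathcal{F}'$ yields the desired $L'$--norm-minimality of $S'$.

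The main obstacle is the construction in the second paragraph, namely producing a taut foliation with both a prescribed leaf $S - N(L)$ and prescribed circle foliations on $T_1, T_2$ simultaneously. In the closed setting of Lemma \ref{lem_norm_minimizing_after_excision} Gabai's theorem \cite{G:Sut-1} applies directly; in the present sutured setting with a link $L$, one has to repackage the $L$--Thurston norm into the sutured manifold framework. If this turns out to be overly involved, an alternative is to argue more combinatorially: take any competitor $S_1$ in the class $[S']$, apply compressions along disks and annuli in $T' - L'$ as in Lemma \ref{lem_minimizer_intersect_torus} to arrange $S_1 \cap T'$ as parallel coherently oriented essential circles, and then cut along $T'$ to produce a surface $\tilde{S}_1 \subset Y$ with $\chi_L(\tilde{S}_1) = \chi_{L'}(S_1)$. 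The delicate point of this alternative is ensuring that $\tilde{S}_1$ represents $[S]$ in $H_2(Y, \partial Y)$, so that the $L$--norm-minimality of $S$ in $Y$ can be invoked.
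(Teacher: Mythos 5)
Your primary route (taut foliations) has a genuine gap. The theorem you want to invoke, \cite[Theorem 5.5]{G:Sut-1}, produces a taut foliation in a \emph{closed irreducible} manifold with a norm-minimizing surface as a leaf; the present lemma has torus boundary, a link $L$, \emph{no irreducibility hypothesis on $Y-L$}, and no positivity hypothesis on $x_L(S)$. It is applied in this paper to surfaces containing meridian disks and annuli and to manifolds whose link complements are not assumed irreducible, and in such degenerate cases a taut foliation with $S-N(L)$ as a leaf and prescribed circle foliations of prescribed slopes on $T_1,T_2$ simply need not exist (e.g.\ when $Y-L$ is reducible or $S$ has compressible or norm-zero components). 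Note that even in the closed case the paper's Lemma \ref{lem_norm_minimizing_after_excision} needs a separate induction over the prime decomposition precisely because Gabai's theorem requires irreducibility; your plan has no analogue of that step, nor a justification for the simultaneous prescription of the leaf and the boundary slopes in the sutured/Scharlemann-norm setting, nor for tautness after gluing (the glued torus is transverse to, not a leaf of, the foliation), nor for applying \cite{Thurston1986anorm} to the generalized norm $x_{L'}$ (the paper needs the $S^1\times F$ replacement trick of Lemma \ref{lem_norm_minimizing_singular_excision} for that).

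Your fallback sketch is in fact the paper's proof, and the ``delicate point'' you leave open is settled by a short homological observation: letting $T\subset Y'$ denote the image of $T_1=T_2$, the composite $H_2(Y',\partial Y')\to H_2(Y',T\cup\partial Y')\cong H_2(Y,\partial Y)$ (excision, i.e.\ cutting along $T$) carries $[S']$ to $[S]$. So take an $L'$-norm-minimizing $\widetilde{S}$ homologous to $S'$, arrange by Lemma \ref{lem_minimizer_intersect_torus} (using incompressibility of $T$ in $Y'-L'$) that $\widetilde{S}\cap T$ consists of parallel coherently oriented essential circles, isotope so that $\widetilde{S}\cap T=S'\cap T$, and cut along $T$: the resulting properly embedded surface in $Y$ is homologous to $S$, and since incompressibility of $T$ forces each circle of $\widetilde{S}\cap T$ to be essential in $\widetilde{S}-L'$, cutting does not increase the Scharlemann complexity, giving $x_L$ of the cut surface at most $x_{L'}(\widetilde{S})<x_{L'}(S')\le x_L(S)$, contradicting the $L$-minimality of $S$. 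With these two observations the combinatorial route closes; as written, neither of your routes is complete.
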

\begin{proof}
	Let $T$ be the image of $T_1$ and $T_2$ in $Y'$, then $T$ is incompressible in $Y'$. 
	Assume $S'$ is not $L'$--norm-minimizing, then by Lemma \ref{lem_minimizer_intersect_torus}, there is a surface $\widetilde{S}$ in $Y'$ that is homologous to $S'$, such that $x_{L'}(\widetilde{S})<x_{L'}(S')$, and $S'$ intersects $T$ transversely at a family of parallel, coherently oriented, essential circles. Therefore, after an isotopy of $\widetilde{S}$, we may assume $\widetilde{S}\cap  T=S'\cap T$. Notice that the fundamental class of $S$ in $H_2(Y,\partial Y;\bZ)$ is given by the image of $[S']\in H_2(Y',\partial Y';\bZ)$ via the map 
	$$H_2(Y',\partial Y')\to H_2(Y',T\cup \partial Y') \cong H_2(Y,\partial Y) .$$
	Therefore the pre-image of $\widetilde{S}$ in $Y$ is homologous to $S$, and it has a smaller generalized Thurston norm with respect to $L$, which contradicts the $L$--norm-minimizing property of $S$.
\end{proof}

\begin{Lemma}
\label{lem_norm_minimizing_after_gluing_along_tori}
Let $Y$ be a connected 3-manifold, and let $L\subset Y$ be a link in the interior of $Y$. Let $T$ be a separating torus in the interior of $Y$ that is disjoint from $L$. Let $Y_1$ and $Y_2$ be the closures of the two components of $Y-T$, and let $L_1=L\cap Y_1$, $L_2=L\cap Y_2$. Let $a\in H_2(Y,\partial Y;\mathbb{Z})$, and let $a_1,a_2$ be the images of $a$ under the restriction maps 
$$H_2(Y,\partial Y;\bZ)\to H_2(Y,\partial Y\cup Y_2;\bZ)\cong H_2(Y_1,\partial Y_1;\bZ)$$
 and 
 $$H_2(Y,\partial Y;\bZ)\to H_2(Y,\partial Y\cup Y_1;\bZ)\cong H_2(Y_2,\partial Y_2;\bZ)$$ respectively. Then
 $$x_L(a)\le x_{L_1}(a_1)+x_{L_2}(a_2).$$
 Moreover, if we further assume that $T$ is incompressible in $Y-L$, then 
  $$x_L(a)= x_{L_1}(a_1)+x_{L_2}(a_2).$$
\end{Lemma}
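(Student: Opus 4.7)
The plan is to prove both parts by cutting-and-pasting along the torus $T$, using Lemma~\ref{lem_minimizer_intersect_torus} to normalize the intersections with $T$.

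For the inequality $x_L(a)\le x_{L_1}(a_1)+x_{L_2}(a_2)$, I would begin with $L_j$-norm-minimizing surfaces $S_j\subset Y_j$ representing $a_j$. The boundary 1-cycles $\partial S_j\cap T$ lie on $T\cong T^2$ and represent classes in $H_1(T)$ that are identified (up to the sign coming from the two induced boundary orientations on $T$) with the image of $a_j$ under the connecting map $H_2(Y_j,\partial Y_j)\to H_1(T)$; in particular they are homologous. Any two homologous embedded oriented 1-manifolds on the torus can be made to coincide as oriented curves by isotopy combined with adding or removing cancelling parallel pairs and trivial circles; these moves are realized inside $Y_j$ by attaching trivial annuli or disks in a collar of $T$, which are disjoint from $L$ and contribute $0$ to $x_{L_j}(S_j)$. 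Once the boundaries match, gluing produces an oriented surface $(S,\partial S)\subset(Y,\partial Y)$. By the relative Mayer--Vietoris sequence, $[S]$ agrees with $a$ up to a multiple of $[T]\in H_2(Y,\partial Y)$, which can be corrected by adding parallel copies of $T$ (disjoint from $L$, with $\chi_L=0$). The subadditivity $\max(0,p+q)\le\max(0,p)+\max(0,q)$ applied componentwise then yields $x_L(S)\le x_{L_1}(S_1)+x_{L_2}(S_2)=x_{L_1}(a_1)+x_{L_2}(a_2)$.

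For the equality under the extra hypothesis that $T$ is incompressible in $Y-L$, I would apply Lemma~\ref{lem_minimizer_intersect_torus} to produce an $L$-norm-minimizing surface $S\subset Y$ with $[S]=a$ whose intersection $S\cap T$ is a (possibly empty) family of parallel, coherently oriented essential curves on $T$. Setting $S^{(j)}=S\cap Y_j$ gives $[S^{(j)}]=a_j$, so $x_{L_j}(S^{(j)})\ge x_{L_j}(a_j)$. The crux is to establish the reverse subadditivity
$$x_L(S)=x_{L_1}(S^{(1)})+x_{L_2}(S^{(2)}).$$
Once this is in hand, the chain $x_L(a)=x_L(S)=x_{L_1}(S^{(1)})+x_{L_2}(S^{(2)})\ge x_{L_1}(a_1)+x_{L_2}(a_2)$, combined with the first inequality, gives the desired equality.

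The main obstacle is establishing the reverse subadditivity displayed above, which I expect to handle componentwise. For a connected component $C$ of $S$ disjoint from $T$, the identity $\max(0,\chi_L(C))=\sum_P\max(0,\chi_L(P))$, where $P$ runs over the components of $C\cap(Y_1\sqcup Y_2)$, is immediate. For $C$ meeting $T$, the connectivity of $C$ together with transversality forces every piece $P$ to have at least one boundary circle on $T$, and that circle is essential on $T$ by our arrangement of $S\cap T$. If some $P$ had $\chi_L(P)<0$, then $P$ would have to be a disk disjoint from $L$, whose essential boundary on $T$ would be a compressing disk for $T$ in $Y_j-L_j\subset Y-L$, contradicting incompressibility. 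Hence every $\chi_L(P)\ge 0$, and the sum of non-negative numbers equals its maximum with $0$, so the equality $\max(0,\chi_L(C))=\sum_P\max(0,\chi_L(P))=\sum_P\chi_L(P)$ goes through term by term. The secondary technical issue is careful orientation bookkeeping in the boundary-matching step of the first inequality, since $T$ carries opposite induced orientations as a boundary of $Y_1$ and $Y_2$.
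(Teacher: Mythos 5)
Your proposal is correct, and the two halves compare differently with the paper. For the inequality $x_L(a)\le x_{L_1}(a_1)+x_{L_2}(a_2)$ you follow essentially the paper's route: glue $L_j$-norm-minimizing representatives of $a_1,a_2$ along $T$ and absorb the discrepancy $m[T]$ using $x_L([T])=0$; the only cosmetic difference is that the paper normalizes $\partial S_j\cap T$ at the outset via Lemma \ref{lem_minimizer_intersect_torus} (after which the homology condition forces the two boundary families to be isotopic with opposite orientations), whereas you match boundaries by hand with stabilization moves in a collar of $T$ — both work, and both treat the ``correct by copies of $T$'' step at the same level of rigor (it is the seminorm property of $x_L$). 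For the equality, however, your route is genuinely different in organization: the paper glues the two local minimizers and invokes Lemma \ref{lem_norm_minimzing_after_gluing_torus_boundary} to conclude that the glued surface is $L$-norm-minimizing, while you instead take an $L$-norm-minimizing representative of $a$ in $Y$, normalize its intersection with $T$ via Lemma \ref{lem_minimizer_intersect_torus} (legitimate, since $T$ is assumed incompressible in $Y-L$), cut along $T$, and prove directly the componentwise additivity $x_L(S)=x_{L_1}(S^{(1)})+x_{L_2}(S^{(2)})$ by showing incompressibility forbids disk pieces with essential boundary on $T$. This bypasses Lemma \ref{lem_norm_minimzing_after_gluing_torus_boundary} entirely, and is arguably more self-contained; note also that your explicit $\max(0,\chi_L)$ bookkeeping is exactly the content that the paper leaves implicit both in its appeal to that lemma and in its unelaborated assertion that $x_L(S)=x_{L_1}(S_1)+x_{L_2}(S_2)$ when $T$ is incompressible, so what each approach ``buys'' is mainly a trade: the paper reuses an already-stated gluing lemma, while your cut argument makes the incompressibility mechanism visible at the cost of redoing a small amount of that lemma's work.
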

\begin{proof}
Let $(S_1,\partial S_1)\subset (Y_1,\partial Y_1)$, and $(S_2,\partial S_2)\subset (Y_2,\partial Y_2)$ be surfaces whose fundamental classes are $a_1$ and $a_2$ that realize the generalized Thurston norm with respect to $L_1$ and $L_2$ respectively. By Lemma \ref{lem_minimizer_intersect_torus}, we may take $S_1$, $S_2$ such that $T\cap \partial S_i$ consists of coherently oriented parallel essential simple closed curves on $T$. By the Mayer-Vietoris sequence, the fundamental classes of $\partial S_1$ and $\partial S_2$ are negative to each other in $H_1(T,\bZ)$, and hence one can isotope $S_1$ and $S_2$ such that $\partial S_1=\partial S_2$ with opposite orientations. Gluing $S_1$ and $S_2$ gives a properly embedded surface $S$ in $Y$. Since $H_2(T,\bZ)\cong \bZ$, by the Mayer-Vietoris sequence again, the kernel of the map 
$$
H_2(Y,\partial Y;\bZ)\to H_2(Y_1,\partial Y_1;\bZ) \oplus H_2(Y_2,\partial Y_2;\bZ) 
$$
is generated by the fundamental class of $T$, and hence there is an integer $m$ such that $[S]=a+m[T]$ in $H_2(Y,\partial Y;\bZ)$. Since $x_L([T])=0$, we have $x_L(a)=x_L([S])$, therefore $x_L(a)\le x_L(S) \le x_{L_1}(S_1)+x_{L_2}(S_2) = x_{L_1}(a_1)+x_{L_2}(a_2)$. 

If we further assume that $T$ is incompressible in $Y-L$, then $x_L(S)=x_{L_1}(S_1)+x_{L_2}(S_2)$, and by Lemma \ref{lem_norm_minimzing_after_gluing_torus_boundary}, $x_L(S)=x_L([S])=x_L(a)$. Therefore $x_L(a)=x_L(S)=x_{L_1}(S_1)+x_{L_2}(S_2)=x_{L_1}(a_1)+x_{L_2}(a_2)$.
\end{proof}

\begin{Lemma}
\label{lem_norm_minimizing_after_removing_link}
Let $Y$ be a compact 3-manifold, and let $L_1,L_2\subset Y$ be two disjoint links in the interior of $Y$.  Let $\Sigma\subset Y$ be an embedded closed surface that intersects $L_1\cup L_2$ transversely. Let $N(L_1)$ be an open tubular neighborhood of $L_1$ such that $L_2\cap N(L_1)= \emptyset$, and $\partial N(L)$ intersects $\Sigma$ transversely, and $N(L)\cap \Sigma$ is a disjoint union of meridian disks. Then $\Sigma$ is $(L_1\cup L_2)$--norm-minimizing in $Y$ if and only if $\Sigma-N(L_1)$ is $L_2$--norm-minimizing in $Y-N(L_1)$. 
\end{Lemma}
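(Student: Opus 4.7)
The plan is to reduce the statement to two equalities: $x_{L_1\cup L_2}(\Sigma) = x_{L_2}(\Sigma-N(L_1))$ for the surfaces themselves, and $x_{L_1\cup L_2}([\Sigma]) = x_{L_2}([\Sigma-N(L_1)])$ for the corresponding homology classes. Once both equalities are established, the norm-minimizing condition on $\Sigma$ is manifestly equivalent to the norm-minimizing condition on $\Sigma-N(L_1)$.

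For the first equality, I would argue component by component. Since each component $\Sigma_i$ of $\Sigma$ remains connected after removing $|\Sigma_i\cap L_1|$ open meridian disks, one has $\chi(\Sigma_i - N(L_1)) = \chi(\Sigma_i) - |\Sigma_i\cap L_1|$ and $|(\Sigma_i - N(L_1))\cap L_2| = |\Sigma_i\cap L_2|$. Substituting into the definitions shows $\chi_{L_1\cup L_2}(\Sigma_i) = \chi_{L_2}(\Sigma_i - N(L_1))$; summing over components gives the surface-level equality.

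For the second equality, I would prove the two inequalities separately. For $x_{L_2}([\Sigma-N(L_1)]) \le x_{L_1\cup L_2}([\Sigma])$, take any competitor $\tilde S$ for $[\Sigma]$, make it transverse to $L_1$ and $\partial N(L_1)$, and after a small isotopy arrange that $\tilde S\cap N(L_1)$ is a union of meridian disks; then $S^* := \tilde S\cap (Y-N(L_1))$ represents $[\Sigma-N(L_1)]$ (by excision $H_2(Y,\partial Y) \to H_2(Y,\partial Y\cup N(L_1)) \cong H_2(Y-N(L_1),\partial(Y-N(L_1)))$), and the same Euler-characteristic bookkeeping yields $x_{L_2}(S^*) = x_{L_1\cup L_2}(\tilde S)$. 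For the reverse inequality, start with a competitor $S^*$ for $[\Sigma-N(L_1)]$ and apply Lemma \ref{lem_minimizer_intersect_torus} to the boundary tori $\partial N(K)$ for components $K\subset L_1$, so that $S^*\cap \partial N(K)$ consists of parallel coherently oriented essential curves. Since essential simple closed curves on a torus are primitive and the homology class of $\partial S^*$ on $\partial N(K)$ is forced to equal $e_K[\mu_K]$ (where $e_K=\Sigma\cdot K$ is the signed intersection number and $\mu_K$ the meridian), the slope is forced to be $\pm[\mu_K]$ and the number of curves is $|e_K|$. Cap off $S^*$ inside each solid torus $N(K)$ by $|e_K|$ disjoint meridian disks to obtain a properly embedded $\tilde S \subset Y$.

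The key verification is then that $[\tilde S] = [\Sigma]$ in $H_2(Y,\partial Y;\bZ)$ and not merely a translate by some meridian disk class; this is the step I expect to be the main obstacle. I would check it by noting that $[\tilde S] - [\Sigma]$ lies in the kernel of the restriction to $H_2(Y-N(L_1),\partial(Y-N(L_1)))$, which by excision is generated by the meridian disks $[D_K]\in H_2(N(K),\partial N(K))$. The coefficient of $[D_K]$ in $[\tilde S]-[\Sigma]$ equals the difference of signed meridian-disk counts of $\tilde S\cap N(K)$ and $\Sigma\cap N(K)$; both counts equal $e_K$ by construction (the caps were oriented to be consistent with $\partial S^*$, whose homology class on $\partial N(K)$ is $e_K[\mu_K]$), so the difference vanishes. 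The final Euler-characteristic computation gives $x_{L_1\cup L_2}(\tilde S) = x_{L_2}(S^*)$, completing the $\ge$ direction and hence the lemma.
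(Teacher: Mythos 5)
Your proposal is correct and follows essentially the same route as the paper: in one direction you delete the meridian disks $\tilde S\cap N(L_1)$ from a competitor, in the other you invoke Lemma \ref{lem_minimizer_intersect_torus} to arrange the competitor's boundary on $\partial N(L_1)$ to be coherently oriented meridians and then cap off with meridian disks, with the same Euler-characteristic bookkeeping in both directions. The only remark is that the ``main obstacle'' you flag is easier than you suggest: the restriction map $H_2(Y,\partial Y)\to H_2(Y,\partial Y\cup\overline{N(L_1)})\cong H_2(Y-N(L_1),\partial(Y-N(L_1)))$ is injective, since its kernel is the image of $H_2(\overline{N(L_1)})=0$ for solid tori, so $[\tilde S]=[\Sigma]$ follows at once from both classes restricting to $[S^*]$; your description of that kernel as generated by meridian-disk classes is imprecise, but your coefficient check reaches the correct conclusion anyway.
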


\begin{proof}
	To simplify the notation, write $L=L_1\cup L_2$.
	Suppose $\Sigma$ is not $L$--norm-minimizing, then there is a properly embedded surface $\Sigma'\subset Y$, such that $\Sigma'$ is homologous to $\Sigma$, and $\Sigma'$ intersects $L$ transversely, and $x_L(\Sigma')<x_L(\Sigma)$. After an isotopy of $\Sigma'$, we may assume that $\partial N(L_1)$ intersects $\Sigma'$ transversely and $N(L_1)\cap \Sigma'$ is a disjoint union of meridian disks. Therefore $x_{L_2}(\Sigma'-N(L_1))=x_L(\Sigma')<x_L(\Sigma)=x_{L_2}(\Sigma-N(L_1))$, and $\Sigma'-N(L_1)$ is homologous to $\Sigma-N(L_1)$ in $Y-N(L_1)$, and hence $\Sigma-N(L_1)$ is not $L_2$--norm-minimizing in $Y-N(L_1)$.

	Conversely, suppose $\Sigma-N(L_1)$ is not $L_2$--norm-minimizing in $Y-N(L_1)$, then there exists a surface $(S,\partial S)\subset (Y-N(L_1),\partial N(L_1))$ such that $(S,\partial S)$ is homologous to $(\Sigma-N(L_1),\partial N(L_1))$ in $(Y-N(L_1),\partial N(L_1))$, and 
	$$x_{L_2}(S)<x_{L_2}(\Sigma-N(L_1)).$$
	 By Lemma \ref{lem_minimizer_intersect_torus}, we may choose $S$ such that $\partial S$ is a disjoint union of coherently oriented meridian circles on $\partial N(L_1)$. Let $\Sigma'\subset Y$ be the closed surface obtained by filling $S$ with meridian disks in $N(L_1)$, then $\Sigma'$ is homologous to $\Sigma$ in $Y$, and 
	$$x_L(\Sigma')=x_{L_2}(S)<x_{L_2}(\Sigma-N(L_1))=x_L(\Sigma),$$
	and hence $\Sigma$ is not $L$--norm-minimizing in $Y$. Therefore the lemma is proved.
\end{proof}

\begin{Lemma}
\label{lem_norm_minimizing_after_adding_parallel_copy}
Let $Y$ be a connected closed 3-manifold, let $L\subset Y$ be a link such that $Y-L$ is irreducible, and suppose $L$ has at least two components. Let $\Sigma$ be a connected, $L$--norm-minimizing surface. Let $K$ be a component of $L$, let $K'$ be a parallel copy of $K$ with respect to an arbitrary framing of $K$, let $L'=L\cup K'$. If $\Sigma$ is $L$--norm-minimizing, then $\Sigma$ is also $L'$--norm-minimizing.
\end{Lemma}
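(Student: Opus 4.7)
The plan is to combine two applications of Lemma \ref{lem_norm_minimizing_after_removing_link} with one application of Lemma \ref{lem_norm_minimizing_after_gluing_along_tori}. First I choose an open tubular neighborhood $N(K)$ of $K$ disjoint from $L-K$, thin enough that $\Sigma$ meets $K$ transversely and $\Sigma\cap N(K)$ is a disjoint union of $n:=|\Sigma\cap K|$ meridian disks. I then realize the parallel copy $K'$ as a longitude on a concentric smaller torus inside $N(K)$, and pick a tubular neighborhood $N(K')\subset N(K)-K$ thin enough that $\Sigma\cap N(K')$ is a disjoint union of $n$ meridian disks of $K'$, one inside each meridian disk of $K$. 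By Lemma \ref{lem_norm_minimizing_after_removing_link} applied with $L_1=K'$ and $L_2=L$, it suffices to prove that $\Sigma':=\Sigma-N(K')$ is $L$-norm-minimizing in $Y':=Y-N(K')$.

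The torus $T:=\partial N(K)$ separates $Y'$ into $Y_1\cup_T Y_2$, where $Y_1:=Y-N(K)$ contains $L-K$ and $Y_2:=N(K)-N(K')\cong T^2\times I$ contains $K$. Since $L$ has at least two components, Lemma \ref{lem_incompressibility_of_boundary_N(L)} shows that $T$ is incompressible in $Y-L$, and this property descends to the subspace $Y'-L\subset Y-L$. Lemma \ref{lem_norm_minimizing_after_gluing_along_tori} then gives
\[
x_L([\Sigma'])=x_{L-K}(a_0)+x_K(a_1),
\]
where $a_0\in H_2(Y_1,\partial Y_1)$ and $a_1\in H_2(Y_2,\partial Y_2)$ denote the restrictions of $[\Sigma']$ to the two pieces.

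To finish I verify two equalities. Applying Lemma \ref{lem_norm_minimizing_after_removing_link} a second time, now with $L_1=K$ and $L_2=L-K$, together with the hypothesis that $\Sigma$ is $L$-norm-minimizing in $Y$, yields that $\Sigma-N(K)$ is $(L-K)$-norm-minimizing in $Y_1$, and hence $x_{L-K}(a_0)=x_{L-K}(\Sigma-N(K))$. In $Y_2\cong T^2\times I$ the surface $\Sigma\cap Y_2$ is a disjoint union of $n$ coherently oriented vertical meridian annuli, each meeting $K$ transversely once, which gives $x_K(\Sigma\cap Y_2)=n$; the matching lower bound $x_K(a_1)\ge n$ follows by noting that any representative $S$ of $a_1$ has algebraic intersection $n$ with $K$, while the product structure combined with the facts that every embedded disk in $T^2\times I$ has null-homologous boundary on the boundary tori and every embedded sphere is null-homologous rules out representatives of smaller norm. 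A direct Euler-characteristic calculation then shows $x_L(\Sigma')=x_{L-K}(\Sigma-N(K))+n=x_L([\Sigma'])$, so $\Sigma'$ is $L$-norm-minimizing in $Y'$, and applying Lemma \ref{lem_norm_minimizing_after_removing_link} in the reverse direction completes the proof.

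The main obstacle is the norm computation $x_K(a_1)=n$ in the product region $T^2\times I$ with respect to the core curve $K$; this requires carefully ruling out representatives with lower generalized Thurston norm and uses both the product structure of $N(K)-N(K')$ and the essentialness of the meridian of $K$ in each boundary torus of $T^2\times I$.
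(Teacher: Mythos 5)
Your overall architecture is sound and is close in spirit to the paper's, but the treatment of the piece near $K$ is genuinely different. The paper never drills out $K'$: it regards $(Y,L')$ as $(Y-N(K),L-K)$ glued to $(S^1\times D^2,\,S^1\times\{p_1,p_2\})$ along $\partial N(K)$, checks that the meridian disk is norm-minimizing with respect to the two-strand link by combining Lemma \ref{lem_norm_minimizing_after_removing_link} with Thurston's fibration theorem (the pair-of-pants fibers $S^1\times(D^2-\{p_1,p_2\})$), and then applies the surface-level gluing Lemma \ref{lem_norm_minimzing_after_gluing_torus_boundary} directly to $\Sigma$, with incompressibility of $\partial N(K)$ coming from Lemma \ref{lem_incompressibility_of_boundary_N(L)}. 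You instead reduce via Lemma \ref{lem_norm_minimizing_after_removing_link} (with $L_1=K'$) to a statement about $\Sigma-N(K')$ in $Y-N(K')$, split along $\partial N(K)$ using the class-level additivity of Lemma \ref{lem_norm_minimizing_after_gluing_along_tori}, and replace the fibration input by a hands-on computation of $x_K$ of the annulus class in $T^2\times I$. Both proofs use the same incompressibility input and both invoke Lemma \ref{lem_norm_minimizing_after_removing_link} for the outside piece; your route trades the citation of Thurston's theorem for an elementary norm computation, at the cost of having to argue the lower bound in $T^2\times I$ by hand. That lower bound does go through from the two facts you cite, but you should spell out the accounting: spheres and properly embedded disks represent $0$ in $H_2(T^2\times I,\partial)$, hence meet $K$ algebraically zero times, so the components of non-positive Euler characteristic carry the full algebraic intersection with $K$, and each such component contributes at least its geometric (hence at least its algebraic) intersection number to $x_K$. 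Also, your final "direct Euler-characteristic calculation" is cleanest as a sandwich: $x_L([\Sigma'])\le x_L(\Sigma')\le x_{L-K}(\Sigma-N(K))+x_K(\Sigma'\cap Y_2)$, which equals $x_L([\Sigma'])$ by the additivity and the minimality of the two pieces.

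One point deserves explicit attention: your claim that any representative of $a_1$ has algebraic intersection $n$ with $K$ (equivalently, that the $n$ meridian annuli are coherently oriented) silently assumes that all intersection points of $\Sigma$ with $K$ have the same sign. If $\Sigma$ meets $K$ with $n_+$ positive and $n_-$ negative points, then $a_1\cdot[K]=n_+-n_-$ and $x_K(a_1)=|n_+-n_-|<n$, so your chain of equalities breaks down; indeed in that situation the conclusion itself is suspect, since tubing $\Sigma$ along an arc of $K$ between two opposite-sign intersections (swallowing the nearby arc of $K'$) preserves the $L$--norm but strictly lowers the $L'$--norm. The paper's written proof carries the same implicit assumption, since Lemma \ref{lem_norm_minimzing_after_gluing_torus_boundary} requires $\partial S\cap T_i$ to be coherently oriented, and in the paper's application (the proof of Theorem \ref{thm_closed_nonvanish_singular}) $\Sigma$ has already been arranged to intersect $L$ with the same sign before this lemma is invoked. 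So this is a shared caveat rather than a defect unique to your argument, but in your write-up it is the one step whose hypothesis you should state and use explicitly.
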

\begin{proof}
		Let $N(K)$ be a small open tubular neighborhood of $K$,  then $(Y,L')$ is obtained from $(Y,L)$ by removing $(N(K),K)$ and gluing back a copy of $(S^1\times D^2,S^1\times\{p_1,p_2\})$. 
		Notice that by Lemma \ref{lem_norm_minimizing_after_removing_link} and \cite[Theorem 3]{Thurston1986anorm}, $\{\pt\}\times D^2\subset S^1\times D^2$ minimizes the generalized Thurston norm with respect to $S^1\times \{p_1,p_2\}$. By Lemma \ref{lem_norm_minimizing_after_removing_link}, $\Sigma-N(K)\subset Y-N(K)$ minimizes the generalized Thurston norm with respect to $L-K$. By Lemma \ref{lem_incompressibility_of_boundary_N(L)}, $\partial N(K)$ is incompressible in $Y-L-N(K)$. Hence by Lemma \ref{lem_norm_minimzing_after_gluing_torus_boundary}, $\Sigma$ is $L'$--norm-minimizing in $Y$.
\end{proof}

\begin{Definition}
Let $Y$ be an oriented 3-manifold, let $L\subset Y$ be a link in the interior of $Y$. Suppose $\Sigma\subset Y$ is an oriented embedded surface that intersects $L$ transversely. We say that $\Sigma$ intersects $L$ \emph{with the same sign}, if there exists an orientation of $L$ such that all the intersection points of $\Sigma$ and $L$ are positive.
\end{Definition}

\begin{Lemma}
\label{lem_norm_minimizing_singular_excision}
Let $Y$ be a closed oriented 3-manifold, and let $L\subset Y$ be a link. Suppose $\Sigma\subset Y$ is a connected, oriented, closed surface in $Y$ that intersects $L$ transversely with the same sign, and suppose that $\Sigma$ is $L$--norm-minimizing.	Let $\varphi:\Sigma\to\Sigma$ be an orientation-preserving diffeomorphism of $\Sigma$ that maps $\Sigma\cap L$ to $\Sigma\cap L$. Let $Y'$ be the closed 3-manifold obtained by cutting $Y$ open along $\Sigma$ and gluing back via $\varphi$, let $\Sigma'$ be the image of $\Sigma$ in $Y'$, and let $L'$ be the image of $L$ in $Y'$. Then $\Sigma'$ is $L'$--norm-minimizing.
\end{Lemma}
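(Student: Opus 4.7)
The plan is to reduce to Lemma~\ref{lem_norm_minimizing_after_excision} (the closed-surface, link-free case) by passing to the exterior of $L$ via Lemma~\ref{lem_norm_minimizing_after_removing_link}. The same-sign hypothesis is exactly what makes this reduction well-behaved on the boundary data.

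First I would choose an open tubular neighborhood $N(L)$ of $L$ such that $N(L)\cap\Sigma$ is a disjoint union of meridian disks, one per intersection point, and set $Y_0:=Y\setminus N(L)$ and $\Sigma_0:=\Sigma\cap Y_0$. By Lemma~\ref{lem_norm_minimizing_after_removing_link}, $\Sigma_0$ is norm-minimizing (in the ordinary Thurston sense, with $L_2=\emptyset$) in $Y_0$. The same-sign hypothesis on $\Sigma\cap L$ translates into the statement that $\partial\Sigma_0$ consists of coherently oriented meridians on each torus component of $\partial Y_0$, and that the restriction $\varphi_0:=\varphi|_{\Sigma_0}$ is an orientation-preserving self-diffeomorphism of $\Sigma_0$ whose action on $\partial\Sigma_0$ permutes these meridians in an orientation-compatible manner.

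Next, the cut-and-reglue defining $Y'$ restricts to a cut-and-reglue of $Y_0$ along $(\Sigma_0,\varphi_0)$, together with a reshuffling of the cylindrical pieces of $N(L)$ obtained by cutting along the meridional disks; the reshuffled pieces recombine into a new tubular neighborhood $N(L')$ of $L'$ in $Y'$. In particular, the cut-and-reglue $Y_0'$ of $Y_0$ is canonically identified with $Y'\setminus N(L')$, and $\Sigma_0':=\Sigma'\cap Y_0'$ is the image of $\Sigma_0$. Applying Lemma~\ref{lem_norm_minimizing_after_removing_link} to $(Y',L')$ in the reverse direction, the conclusion of the lemma is now equivalent to showing that $\Sigma_0'$ is norm-minimizing in $Y_0'$ in the ordinary Thurston sense.

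The remaining step is a bounded-manifold analog of Lemma~\ref{lem_norm_minimizing_after_excision} for properly embedded surfaces in 3-manifolds with torus boundary, and this is the main obstacle. I would mimic the proof of Lemma~\ref{lem_norm_minimizing_after_excision}: first reduce to the case that $Y_0$ is irreducible via a connected-sum induction (moving reducing spheres off $\Sigma_0$ using Lemma~\ref{lem_disjoint_reducing_sphere}); then, in the irreducible case, apply the sutured-manifold version of Gabai's theorem to equip $Y_0$ with a taut foliation having $\Sigma_0$ as a closed leaf, where the sutures on $\partial Y_0$ are chosen disjoint from $\partial\Sigma_0$. This foliation descends through the excision to a taut foliation on $Y_0'$ having $\Sigma_0'$ as a closed leaf, and \cite[Corollary~2]{Thurston1986anorm} then yields that $\Sigma_0'$ is norm-minimizing in $Y_0'$, completing the proof.
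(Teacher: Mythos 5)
Your reduction to the link exterior is sound, but at the decisive step you take a genuinely different route from the paper. The paper also uses Lemma \ref{lem_norm_minimizing_after_removing_link} in both directions, yet it never needs any with-boundary excision statement: instead it caps each torus of $\partial\bigl(Y-N(L)\bigr)$ with a copy of $S^1\times F$, where $F$ is a once-punctured torus, glued so that $\{\pt\}\times\partial F$ becomes a meridian, and it replaces the meridian disks of $\Sigma\cap N(L)$ by copies of $F$. This yields a closed pair $(\overline{Y},\overline{\Sigma})$; Thurston's fibration theorem for $S^1\times F$, the incompressibility of $\partial N(L)$ (Lemma \ref{lem_incompressibility_of_boundary_N(L)}), the same-sign hypothesis and Lemma \ref{lem_norm_minimzing_after_gluing_torus_boundary} show that $\overline{\Sigma}$ is norm-minimizing, so the already-proven closed excision Lemma \ref{lem_norm_minimizing_after_excision} applies to $(\overline{Y},\overline{\Sigma})$, and Lemma \ref{lem_norm_minimizing_after_gluing_along_tori} together with Lemma \ref{lem_norm_minimizing_after_removing_link} transfers the conclusion back to $(Y',L')$. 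What your approach buys is conceptual directness (no auxiliary capped manifold); what the paper's capping trick buys is that all foliation-theoretic input stays in the closed setting, which it has already handled.

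The with-boundary excision lemma you need is true, and your plan is workable, but as written it has soft spots you would have to repair. You do not need a ``sutured-manifold version with sutures chosen disjoint from $\partial\Sigma_0$'': Gabai's Theorem 5.5 is stated for compact oriented irreducible $3$--manifolds whose boundary is a union of tori, and it makes the given norm-minimizing surface a \emph{compact leaf with boundary}, not a closed leaf. To push the cut-and-reglue through you should use the foliation whose restriction to $\partial Y_0$ is a foliation by circles parallel to $\partial\Sigma_0$; then after cutting along $\Sigma_0$ (and the boundary tori along $\partial\Sigma_0$) and regluing by $\varphi_0$, the boundary restriction is again a foliation of tori by circles, and any Reeb component of the reglued foliation would be disjoint from the leaf $\Sigma_0'$ and hence already present in the taut foliation on $Y_0$, so the new foliation is Reebless with Reebless boundary restriction. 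You must then invoke the version of ``compact leaves are norm-minimizing'' for manifolds with boundary and classes in $H_2(Y_0',\partial Y_0')$ (the citation of Thurston's Corollary 2 in the paper is used only in the closed case), and you still need the reducible case, via additivity of the norm under connected sum as in the proof of Lemma \ref{lem_norm_minimizing_after_excision}. None of this is fatal, but these boundary-behavior details are exactly what the paper's $S^1\times F$ capping construction is designed to sidestep.
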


\begin{proof}
If $x_L(\Sigma)=0$, then $x_{L'}(\Sigma')=0$, and the result is obvious. From now on, we assume $x_L(\Sigma)>0$ and hence $H_2(Y;\bZ)\neq 0$ and $(Y,L)\neq (S^1\times S^2,S^1\times\{\pt\})$. Therefore by Lemma \ref{lem_incompressibility_of_boundary_N(L)}, $\partial N(L)$ is incompressible in $Y-N(L)$.

Let $N(L)$ be a small open tubular neighborhood of $L$. 
Construct a manifold $\overline{Y}$ as follows. Let $K_1,\cdots,K_m$ be the connected components of $L$. For each component $K_i$, let $N(K_i)$ be the corresponding component of $N(L)$. Let $F$ be a torus with an open disk removed. Remove $N(K_i)$ from $Y$ and glue back a copy of $S^1\times F$, such that $S^1\times\{\pt\}\subset S^1\times \partial F$ is glued to a longitude of $K_i$, and $\{\pt\}\times \partial F$ is glued to a meridian of $K_i$. Let $\overline{\Sigma}$ be the closed surface in $\overline{Y}$ obtained by replacing every meridian disk in $N(L)\cap \Sigma$ with a copy of $F$. 

Similarly, construct a manifold $\overline{Y'}$ from $(Y',L')$ and a closed surface $\overline{\Sigma'}\subset \overline{Y'}$ from $\Sigma$ as above.

Since $S^1\times F$ is foliated by parallel copies of $F$, $\{\pt\}\times F$ minimizes the Thurston norm in $S^1\times F$ \cite[Theorem 3]{Thurston1986anorm}. By Lemma \ref{lem_norm_minimizing_after_removing_link}, $\Sigma-N(L)$ minimizes the Thurston norm in $Y-N(L)$. Since $\Sigma$ intersects $L$ with the same sign and $\partial N(L)$ is incompressible in $Y-N(L)$, by Lemma \ref{lem_norm_minimzing_after_gluing_torus_boundary}, $\overline{\Sigma}$ minimizes the Thurston norm in $\overline{Y}$. By Lemma \ref{lem_norm_minimizing_after_excision}, $\overline{\Sigma'}$ minimizes the Thurston norm in $\overline{Y'}$, namely $x([\overline{\Sigma'}])=x(\overline{\Sigma'})$. 

Write $\overline{Y'}$ as $\overline{Y'_1}\cup_{\partial N(L')} \overline{Y'_2}$, where $\overline{Y'_1}=Y'-N(L')$, and $\overline{Y'_2}$ is the disjoint union of copies of $S^1\times F$.
By Lemma \ref{lem_norm_minimizing_after_gluing_along_tori}, we have 
$$x([\overline{\Sigma'}\cap \overline{Y'_1}])+x([\overline{\Sigma'}\cap \overline{Y'_2}])\ge  x([\overline{\Sigma'}]),$$ 
and by \cite[Theorem 3]{Thurston1986anorm}, we have $x([\overline{\Sigma'}\cap \overline{Y'_2}])=x(\overline{\Sigma'}\cap \overline{Y'_2})$. Therefore 
$$x([\Sigma'-N(L')])=x([\overline{\Sigma'}\cap \overline{Y'_1}])\ge x(\overline{\Sigma'}) -x(\overline{\Sigma'}\cap \overline{Y'_2}) = x(\overline{\Sigma'}\cap \overline{Y'_1})= x(\Sigma'-N(L')),$$ 
and hence  $\Sigma'-N(L')$ is norm-minimizing in $Y'-N(L')$. By Lemma \ref{lem_norm_minimizing_after_removing_link} again, this implies $\Sigma'$ is $L'$--norm-minimizing in $Y'$.
\end{proof}

\subsection{Proof of Theorem \ref{thm_closed_nonvanish_singular}}
We first prove a non-vanishing theorem when $L$ is disjoin from $\Sigma$.

\begin{Theorem}
\label{thm_closed_non_vanishing_L_disjoint_from_Sigma}
Let $Y$ be a connected closed 3-manifold, and let $\omega\subset Y$ be an embedded closed 1-manifold.  Let $L$ be a link disjoint from $\omega$ such that $Y-L$ is irreducible and $(Y,L)\ncong (S^1\times S^2,S^1\times \{\pt\})$. Suppose $\Sigma\subset Y-L$ is a connected closed surface with genus at least $1$, such that the algebraic intersection number $\Sigma\cdot\omega$ is odd, and that $\Sigma$ is $L$--norm-minimizing.  
	Then 
$$
\II(Y,L,\omega|\Sigma)\neq 0.
$$
\end{Theorem}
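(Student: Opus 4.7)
The plan is to reduce Theorem \ref{thm_closed_non_vanishing_L_disjoint_from_Sigma} to its nonsingular analogue (Theorem \ref{thm_closed_nonvanish_nonsingular}) by eliminating the link $L$ through a sequence of singular instanton excisions performed in the complement of $\Sigma$. This is possible precisely because $L\cap\Sigma=\emptyset$, so every cut and regluing can be arranged to leave $\Sigma$ untouched, thereby preserving the eigenspace datum $|\Sigma$.

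\textbf{Step 1 (Doubling the singular locus).} If $L=\emptyset$ the conclusion is exactly Theorem \ref{thm_closed_nonvanish_nonsingular}, so assume $L\neq\emptyset$. For each component $K_i$ of $L$, take a parallel copy $K_i'$ using an arbitrary framing, and form $\widetilde{L}=L\cup\bigcup_i K_i'$. The one-component base case is handled directly using the hypothesis $(Y,L)\ncong(S^1\times S^2,S^1\times\{\pt\})$ together with Lemmas \ref{lem_incompressibility_of_boundary_N(L)} and \ref{lem_irreducible_after_gluing_along_incompressible_boundary}; iterating Lemma \ref{lem_irreducible_after_adding_parallel_copy} keeps $Y-\widetilde{L}$ irreducible, and Lemma \ref{lem_norm_minimizing_after_adding_parallel_copy} keeps $\Sigma$ a $\widetilde{L}$--norm-minimizing surface. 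Insertion of each parallel copy corresponds, via the standard ``Hopf-pair'' manipulation of the $\SO(3)$ bundle data, to an isomorphism $\II(Y,L,\omega|\Sigma)\cong\II(Y,\widetilde{L},\widetilde{\omega}|\Sigma)$ for a suitable closed 1-manifold $\widetilde{\omega}\subset Y-\widetilde{L}$ whose algebraic intersection number satisfies $\Sigma\cdot\widetilde{\omega}\equiv\Sigma\cdot\omega\pmod 2$.

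\textbf{Step 2 (Excising the doubled locus).} For each pair, the torus $T_i=\partial N(K_i\cup K_i')\subset Y-\widetilde{L}-\Sigma$ is incompressible by Lemma \ref{lem_incompressibility_of_boundary_N(L)}. Apply Floer's excision for singular instanton homology along each $T_i$: cut along $T_i$, discard the solid-torus region containing the two parallel singular components, and glue back a standard solid-torus piece carrying only an auxiliary $\omega$--arc. By Lemma \ref{lem_irreducible_after_gluing_along_incompressible_boundary} the resulting closed 3-manifold $Y'$ is irreducible, and by Lemmas \ref{lem_norm_minimzing_after_gluing_torus_boundary} and \ref{lem_norm_minimizing_after_removing_link} the surface $\Sigma'\subset Y'$ (which, being disjoint from every $T_i$, is just the image of $\Sigma$) remains norm-minimizing with genus $g(\Sigma')=g(\Sigma)\ge 1$. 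The excision formula yields
\[
\II(Y,\widetilde{L},\widetilde{\omega}|\Sigma)\cong\II(Y',\emptyset,\omega'|\Sigma')
\]
for a closed 1-manifold $\omega'\subset Y'$ with $\Sigma'\cdot\omega'\equiv\Sigma\cdot\widetilde{\omega}\pmod 2$.

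\textbf{Step 3 (Invoking the nonsingular theorem).} The triple $(Y',\omega',\Sigma')$ now meets every hypothesis of Theorem \ref{thm_closed_nonvanish_nonsingular}: $Y'$ is connected and irreducible, $\Sigma'$ is a connected closed norm-minimizing surface of genus $\ge 1$, and $\Sigma'\cdot\omega'\equiv\Sigma\cdot\omega\pmod 2$ is odd. Hence $\II(Y',\emptyset,\omega'|\Sigma')\neq 0$, and the chain of isomorphisms from Steps 1 and 2 transports this nonvanishing back to $\II(Y,L,\omega|\Sigma)\neq 0$.

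\textbf{Main obstacle.} The geometric conditions required for the excision---incompressibility of the cutting tori, irreducibility of the target, and norm-minimization of $\Sigma'$---are exactly what the preparatory results in Sections \ref{subsec_excision_irreducibility} and \ref{subsec_excision_Thurston} are designed to supply. The principal remaining difficulty is therefore bookkeeping: one must carefully specify the singular excision and track the 1-cycle $\omega$ through each cut-and-paste so that the parity $\Sigma\cdot\omega\equiv\Sigma'\cdot\omega'\pmod 2$, which is the trigger for Theorem \ref{thm_closed_nonvanish_nonsingular}, is preserved at every step.
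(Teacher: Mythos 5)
Your overall plan (remove $L$ by excisions performed away from $\Sigma$ and then quote Theorem \ref{thm_closed_nonvanish_nonsingular}) is the same as the paper's, but the Floer-theoretic core of your Steps 1--2 has a genuine gap. The claim that adding one parallel copy of each component gives an isomorphism $\II(Y,L,\omega|\Sigma)\cong\II(Y,\widetilde{L},\widetilde{\omega}|\Sigma)$ for some \emph{closed} 1-manifold $\widetilde{\omega}$ is unjustified, and it fails in cases covered by the theorem: take $Y=S^1\times T^2$, $\Sigma$ a fiber torus, $\omega=S^1\times\{p\}$, and $L$ a single essential curve in a fiber. Then $\II(Y,L,\omega|\Sigma)=\SiHI(K_1)\cong\bC^2$ by Lemma \ref{lem_surface_knot_rank_2}, while for the doubled link $\II(Y,K_2,\omega|\Sigma)\cong\AHI(\mathcal{K}_2)\cong\AHI(\mathcal{K}_1)^{\otimes 2}\cong\bC^4$ by Lemma \ref{lem_torus_reduce_to_AHI} and \cite[Proposition 4.3]{AHI}; so ``doubling the singular locus'' changes the group. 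Relatedly, the excision in your Step 2 along $T_i=\partial N(K_i\cup K_i')$ needs that torus to be odd for the bundle data (it meets the link zero times), and with $\widetilde{\omega}$ closed this cannot be arranged: what makes such tori odd is an arc of $\omega$ with endpoints on the link, and an arc joining $K_i$ to $K_i'$ crosses $T_i$ an even number of times. Hence Floer excision does not apply as described, and ``discard the solid torus and glue back a standard piece'' is not a move the excision theorem provides without exhibiting the closed-off admissible triple and computing its Floer homology. (A smaller point: Lemmas \ref{lem_irreducible_after_adding_parallel_copy} and \ref{lem_norm_minimizing_after_adding_parallel_copy}, which you invoke, assume $L$ has at least two components, so your one-component base case is not actually covered by the lemmas you cite.)

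The paper circumvents exactly these issues. Instead of parallel copies, it adds to each $K_i$ a small meridian $m_i$ together with an arc $u_i$ from $K_i$ to $m_i$ (singular bundle data with boundary on the link), and uses the unoriented skein exact triangle to obtain only the one-way implication $\II(Y,L\cup m,\omega\cup u|\Sigma)\neq 0\Rightarrow\II(Y,L,\omega|\Sigma)\neq 0$; no isomorphism is claimed at this stage, and none is needed. It then excises along the pairs of tori $\partial N(K_i)\sqcup\partial N(m_i)$, each of which meets $u_i$ transversely in one point (so the excision hypothesis holds), and the closed-off pieces are copies of $(S^1\times S^2, S^1\times\{p_1,p_2\},\mathrm{arc})$ whose Floer homology is $\bC$. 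This removes the entire singular locus; the topological input (Lemmas \ref{lem_irreducibility_Y-L-m}, \ref{lem_incompressibility_of_boundary_N(L)}, \ref{lem_irreducible_after_gluing_along_incompressible_boundary}, \ref{lem_norm_minimzing_after_gluing_torus_boundary}, \ref{lem_norm_minimizing_after_removing_link}) together with the parity of $\overline{\omega}\cdot\overline{\Sigma}$ then allows the appeal to Theorem \ref{thm_closed_nonvanish_nonsingular}. To repair your argument you would have to replace the claimed isomorphism of Step 1 by such an exact-triangle (or otherwise justified) comparison involving bundle data with endpoints on $L$, which is essentially the paper's proof.
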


\begin{proof}
Let $\hat Y=S^1\times S^2$, take $\{p_1,p_2\}\subset S^2$, let $\hat L=S^1\times \{p_1,p_2\}\subset \hat Y$, let $\hat\omega$ be an arc in $\{\pt\}\times S^2\subset Y$ joining $(\pt,p_1)$ and $(\pt,p_2)$. Recall that $\II(\hat Y,\hat L,\hat \omega)_{(2)}$ denotes the generalized eigenspace of $\mu(\pt)$ on $\II(\hat Y,\hat L,\hat \omega)$ with eigenvalue $2$. By \cite[Proposition 2.1, Lemma 5.1]{XZ:excision}, we have
\begin{equation}
\label{eqn_instanton_rank_1_S2_times_S1}
	\II(\hat Y,\hat L,\hat \omega)\cong \II(\hat Y,\hat L,\hat \omega)_{(2)}\cong \bC.
\end{equation}

Let $K_1,\cdots,K_s$ be the connected components of $L$. For each $i=1,\cdots,s$, let $m_i$ be a small meridians around $K_i$, and let $u_i$ be an arc that connects $K_i$ and $m_i$. Let $m=\cup_{i=1}^s m_i$, let $u=\cup_{i=1}^s u_i$. Applying the un-oriented skein exact triangle \cite[Proposition 6.11]{KM:Kh-unknot} to a crossing between $K_1$ and $m_1$, we obtain
an exact triangle
\begin{equation*}
\xymatrix{
  \II(Y,L\cup m_1, \omega\cup u_1|\Sigma) \ar[r]^{} 
                &     \II(Y,L,\omega|\Sigma) \ar[d]^{}    \\
                & \II(Y,L,\omega|\Sigma)   \ar[ul]_{}             }
\end{equation*}
Therefore $\II(Y,L\cup m_1, \omega\cup u_1|\Sigma)\neq 0$ implies $\II(Y,L,\omega|\Sigma)\neq 0$. Repeating this argument on $m_2,\cdots,m_s$, we conclude that one only needs to prove
$$
\II(Y,L\cup m, \omega\cup u|\Sigma)\neq 0.
$$ 

For each $i=1,\cdots,s$, let $N(K_i)$ and $N(m_i)$ be small tubular neighborhoods of $K_i$ and $m_i$ respectively, such that $N(K_i)\cup N(m_i)$ is disjoint from $\omega$ and $u-u_i$, and $\partial N(K_i)$ and $\partial N(m_i)$ intersect $u_i$ transversely at one point. Moreover, suppose  $N(K_1),\cdots,N(K_s)$, $N(m_1),\cdots,N(m_s)$ are disjoint from each other and from $\Sigma$. Let $\varphi_i:\partial N(K_i)\to \partial N(m_i)$ be an orientation-reversing diffeomorphism that maps a meridian of $\partial N(K_i)$ to a meridian to $\partial N(m_i)$, and maps $\partial N(K_i)\cap u_i$ to $\partial N(m_i)\cap u_i$. 

Let $\overline{Y}$ be the manifold obtained from $Y$ by removing $N(K_i)$ and $N(m_i)$ for all $i$, and gluing the resulting boundaries using the maps $\varphi_i$. Let $\overline{\omega}$ and $\overline{\Sigma}$ be the images of $\omega$ and $\Sigma$ in $\overline{Y}$ respectively. By the torus excision theorem and \eqref{eqn_instanton_rank_1_S2_times_S1}, we have
$$
\II(Y,L\cup m,\omega\cup u|\Sigma)\cong \II(\overline{Y},\emptyset,\overline{\omega}|\overline{\Sigma})\otimes\Big(\II(\hat Y,\hat L,\hat \omega)_{(2)}\Big)^{\otimes s}\cong \II(\overline{Y},\emptyset,\overline{\omega}|\overline{\Sigma}). 
$$

Let $Y'=Y-\cup_{i=1}^s N(K_i)-\cup_{i=1}^s N(m_i)$.
By Lemma \ref{lem_irreducibility_Y-L-m}, $Y'$ is irreducible. By Lemma \ref{lem_incompressibility_of_boundary_N(L)}, both $\partial N(K_i)$ and $\partial N(m_i)$ are incompressible in $Y'$, and hence by Lemma \ref{lem_irreducible_after_gluing_along_incompressible_boundary}, $\overline{Y}$ is irreducible.
 Since $\Sigma$ is $L$--norm-minimizing in $Y$, it is norm-minimizing in $Y'$. By Lemma \ref{lem_norm_minimzing_after_gluing_torus_boundary}, $\overline{\Sigma}$ is norm-minimizing in $\overline{Y}$.
Notice that $\overline{\omega}\cdot  \overline{\Sigma}$ and $\omega\cdot \Sigma$ have the same parity, so $\overline{\omega}\cdot  \overline{\Sigma}$ is odd.
Therefore, it follows from Theorem \ref{thm_closed_nonvanish_nonsingular} that $\II(\overline{Y},\emptyset,\overline{\omega}|\overline{\Sigma})\neq 0$, and the theorem is proved.
\end{proof}

We also need the following technical lemma.

\begin{Lemma}
\label{lem_non_vanishing_even_number_of_intersections}
Let $Y$ be a connected closed 3-manifold, let $L\subset Y$ be a link, assume $Y-L$ is irreducible. Let $(\omega,\partial\omega)\subset (Y,L)$ be a properly embedded 1-manifold with boundary.  Suppose $\Sigma\subset Y$ is a connected closed surface in $Y$, such that $\Sigma$ is $L$--norm-minimizing and satisfies the following conditions:
\begin{enumerate}
\item The link $L$ can be decomposed as $L=L^{(1)}\sqcup L^{(2)}$, such that every component of $L^{(1)}$ intersects $\Sigma$ transversely at one point, and $L^{(2)}$ is disjoint from $\Sigma$,
\item $L^{(1)}$ is non-empty,
\item The interior of $\omega$ is disjoint from $L^{(1)}$. For each component $K$ of $L^{(1)}$, $\partial \omega$ and $K$ intersect at one point,
\item $L^{(2)}$ is disjoint from $\omega$,
\item $x_L(\Sigma)>0$.
\end{enumerate}
Then 
$$
\II(Y,L,\omega|\Sigma)\neq 0.
$$
\end{Lemma}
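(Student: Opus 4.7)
The plan is to reduce the statement to Theorem~\ref{thm_closed_non_vanishing_L_disjoint_from_Sigma} by emulating the strategy used in its own proof: for each problematic component $K$ of $L^{(1)}$, introduce an auxiliary meridian $m_K$, apply the unoriented skein exact triangle once per meridian to pass to a larger link, and then use torus excision to remove the enlarged link components in a way that disentangles the surface $\Sigma$ from the link.

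Concretely, for each $K\in L^{(1)}$ I would place a small meridional loop $m_K$ of $K$ (positioned disjoint from $\Sigma$) and a connecting arc $u_K$ from $K$ to $m_K$, set $M=\bigcup m_K$ and $u=\bigcup u_K$, and apply the skein exact triangle of \cite[Prop.~6.11]{KM:Kh-unknot} at a crossing between $K$ and $m_K$ to reduce the problem to showing
\[
\II(Y,L\cup M,\omega\cup u\,|\,\Sigma)\neq 0.
\]
Taking the disjoint pairs of tori $\partial N(K)\sqcup\partial N(m_K)$ for $K\in L^{(1)}$ and gluing via orientation-reversing diffeomorphisms $\varphi_K$ that match meridians to meridians and $u_K\cap\partial N(K)$ to $u_K\cap\partial N(m_K)$, the torus excision theorem of \cite{XZ:excision} together with $\II(\hat Y,\hat L,\hat\omega)_{(2)}\cong\bC$ from \eqref{eqn_instanton_rank_1_S2_times_S1} yields an isomorphism
\[
\II(Y,L\cup M,\omega\cup u\,|\,\Sigma)\;\cong\;\II(\overline Y,\overline L,\overline\omega\,|\,\overline\Sigma),
\]
in which the new link $\overline L=L^{(2)}$ is disjoint from the new surface $\overline\Sigma$ and the new 1-manifold $\overline\omega$ is closed (the boundary of $\omega\cup u$ on $L^{(1)}\cup M$ is absorbed by the excision, with each arc of $u_K$ becoming a closed loop in $\overline Y$).

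The remaining task is to check that $(\overline Y,\overline L,\overline\omega,\overline\Sigma)$ satisfies all the hypotheses of Theorem~\ref{thm_closed_non_vanishing_L_disjoint_from_Sigma}: irreducibility of $\overline Y-\overline L$ by a combination of Lemmas~\ref{lem_incompressibility_of_boundary_N(L)}, \ref{lem_irreducible_after_gluing_along_incompressible_boundary}, \ref{lem_irreducible_after_adding_parallel_copy}, and \ref{lem_irreducibility_Y-L-m}; the condition $(\overline Y,\overline L)\ncong(S^1\times S^2,S^1\times\{\pt\})$ together with the bound $g(\overline\Sigma)\geq 1$ from the assumption $x_L(\Sigma)>0$; the $\overline L$-norm-minimizing property of $\overline\Sigma$ by Lemmas~\ref{lem_norm_minimizing_after_adding_parallel_copy}, \ref{lem_norm_minimzing_after_gluing_torus_boundary}, and \ref{lem_norm_minimizing_singular_excision}; and the parity condition $\overline\omega\cdot\overline\Sigma$ odd from a direct geometric computation in the excised model. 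Theorem~\ref{thm_closed_non_vanishing_L_disjoint_from_Sigma} applied to $(\overline Y,\overline L,\overline\omega,\overline\Sigma)$ then completes the proof.

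The main obstacle is designing the meridians $m_K$ and the excision maps $\varphi_K$ so that in the excised configuration $\overline L$ becomes disjoint from $\overline\Sigma$ while simultaneously producing the correct odd parity for $\overline\omega\cdot\overline\Sigma$. Achieving this parity relies essentially on hypothesis (3) of the lemma, namely that each component of $L^{(1)}$ meets $\partial\omega$ at exactly one point: this one-to-one pairing between the components of $L^{(1)}$ and endpoints of $\omega$ is precisely what ensures that the excision contributes the right number of intersection points between $\overline\omega$ and $\overline\Sigma$ to invoke Theorem~\ref{thm_closed_non_vanishing_L_disjoint_from_Sigma}.
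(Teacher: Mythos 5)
Your reduction target (Theorem \ref{thm_closed_non_vanishing_L_disjoint_from_Sigma}) is the right one, but the excision step at the core of your plan does not go through. You propose to eliminate each component $K$ of $L^{(1)}$ by gluing $\partial N(K)$ to the boundary of a tubular neighborhood of a meridian $m_K$ placed disjoint from $\Sigma$. However, $\Sigma$ meets each such $K$ transversely in one point, so $\Sigma\cap N(K)$ is a meridian disk and $\Sigma\cap\partial N(K)$ is a circle, while $\Sigma\cap\partial N(m_K)=\emptyset$. After removing $N(K)\cup N(m_K)$ and gluing $\partial N(K)$ to $\partial N(m_K)$, the image of $\Sigma$ is a surface with one unmatched boundary circle per component of $L^{(1)}$ sitting on the excision tori; there is no closed surface $\overline\Sigma$ in $\overline Y$, so the excision theorem with distinguished surfaces (which requires the surfaces to be cut and reglued compatibly along the excision tori) cannot be invoked to produce the isomorphism $\II(Y,L\cup M,\omega\cup u\,|\,\Sigma)\cong\II(\overline Y,\overline L,\overline\omega\,|\,\overline\Sigma)$. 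You flag this as ``the main obstacle,'' but with $m_K$ disjoint from $\Sigma$ it is not an arrangeable detail: one side of each gluing torus carries a circle of $\Sigma$ and the other side carries nothing. A secondary problem is admissibility of the excision tori: by hypothesis (3) the 1-manifold $\omega$ has exactly one endpoint on $K$ and the arc $u_K$ crosses $\partial N(K)$ once more, so $\partial N(K)$ meets $\omega\cup u$ in an even number of points while missing the link entirely; such a torus is integral and the singular torus excision theorem does not apply to it. (The meridian/skein-triangle preamble is also unnecessary for this statement.)

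The paper's proof uses a different excision that your scheme misses: hypotheses (2)--(4) force the number $s$ of components of $L^{(1)}$ to be even (every endpoint of $\omega$ lies on $L^{(1)}$, one per component), say $s=2s'$, and one excises along the tori $\partial N(K^{(1)}_i)$ themselves, gluing $\partial N(K^{(1)}_i)$ to $\partial N(K^{(1)}_{i+s'})$ by a diffeomorphism matching $\Sigma$-circles to $\Sigma$-circles and $\omega$-points to $\omega$-points. Since both tori in each pair meet $\Sigma$ in a circle and meet $\omega$ in one point, the cut surface recloses to a connected closed surface $\overline\Sigma$ with $x(\overline\Sigma)=x_L(\Sigma)>0$ disjoint from $\overline L=L^{(2)}$, each excision torus is admissible, and $\overline\omega$ becomes a closed 1-manifold with $\overline\omega\cdot\overline\Sigma$ odd (after first homotoping $\omega$ to meet $\Sigma$ in an odd number of points). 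The removed pairs of solid tori contribute $\bigl(\II(\hat Y,\hat L,\hat\omega)_{(2)}\bigr)^{\otimes s'}\cong\bC$ as in \eqref{eqn_instanton_rank_1_S2_times_S1}, the norm-minimizing property of $\overline\Sigma$ follows from Lemmas \ref{lem_norm_minimzing_after_gluing_torus_boundary} and \ref{lem_norm_minimizing_after_removing_link}, and Theorem \ref{thm_closed_non_vanishing_L_disjoint_from_Sigma} finishes the argument. Pairing the components of $L^{(1)}$ with each other, rather than with auxiliary meridians, is exactly the idea missing from your proposal.
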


\begin{proof}
Let $K^{(1)}_1,\cdots,K^{(1)}_s$ be the components of $L^{(1)}$, and let $K^{(2)}_1,\cdots,K^{(2)}_t$ be the components of $L^{(2)}$. Conditions (2), (3), (4) above imply that $s$ is even and $s\ge 2$. Write $s=2s'$. Since each $K^{(1)}_i$ contains one boundary point of $\omega$, we can homotope $\omega$ such that $\omega$ and $\Sigma$ intersect transversely at an odd number of points. Let $N(K^{(1)}_i)$ be tubular neighborhoods of $K^{(1)}_i$, such that for each $i$, $N(K^{(1)}_i)$ intersects $\Sigma$ at a meridian disk, $\partial N(K^{(1)}_i)$ intersects $\omega$ and transversely at one point, and $N(K^{(1)}_i)$ is disjoint from $L^{(2)}$. 

For $i=1,\cdots,s'$, let $\varphi_i$ be an orientation-reversing diffeomorphism from $\partial N(K_i^{(1)})$ to $\partial N(K_{i+s'}^{(1)})$, such that 
$$\varphi_i\big(\partial N(K_i^{(1)})\cap \Sigma\big) = \partial N(K_{i+s'}^{(1)})\cap \Sigma,$$ and 
$$\varphi_i\big(\partial N(K_i^{(1)})\cap \omega\big)=\partial N(K_{i+s'}^{(1)})\cap \omega.$$ Let $\overline{Y}$ be the manifold obtained from $Y$ by removing $ N(K_i^{(1)})$ for all $i=1,\cdots,s$, and gluing the resulting boundaries by $\varphi_i$ for $i=1,\cdots, s'$. Let $\overline{L}$ be the image of $L^{(2)}$ in $\overline{Y}$, and let $\overline{\omega}$, $\overline{\Sigma}$ be the images of $\omega$, $\Sigma$ in $\overline{Y}$ respectively.

Let $\hat Y=S^1\times S^2$, take $\{p_1,p_2\}\subset S^2$, let $\hat L=S^1\times \{p_1,p_2\}\subset \hat Y$, let $\hat\omega$ be an arc in $\{\pt\}\times S^2\subset Y$ joining $(\pt,p_1)$ and $(\pt,p_2)$. By the torus excision theorem and \eqref{eqn_instanton_rank_1_S2_times_S1}, we have
$$
\II(Y,L,\omega|\Sigma)\cong \II(\overline{Y},\overline{L},\overline{\omega}|\overline{\Sigma})\otimes\Big(\II(\hat Y,\hat L,\hat \omega)_{(2)}\Big)^{\otimes s'}\cong \II(\overline{Y},\overline{L},\overline{\omega}|\overline{\Sigma}). 
$$

Since $Y-L$ is irreducible, by Lemma \ref{lem_incompressibility_of_boundary_N(L)}, $\partial N(K^{(1)}_i)$ and $\partial N(K^{(2)}_j)$ are incompressible in $Y-L$. Therefore, by Lemma \ref{lem_norm_minimzing_after_gluing_torus_boundary} and Lemma \ref{lem_norm_minimizing_after_removing_link}, $\overline{\Sigma}$ is $\overline{L}$--norm-minimizing in $\overline{Y}$. By Condition (4), $\overline{\omega}$ is a closed 1-manifold in $\overline{Y}-\overline{L}$. Since we have homotoped $\omega$ so that $\omega$ and $\Sigma$ intersect transversely at an odd number of points, $\overline{\omega}\cdot \overline{\Sigma}$ is odd. By Condition (5), $x_{\overline{L}}(\overline{\Sigma})=x(\overline{\Sigma}) = x_L(\Sigma)>0$. Since $\overline{\Sigma}$ is $\overline{L}$--norm-minimizing, $x_{\overline L}(\overline{\Sigma})>0$  implies $(\overline{Y},\overline{L})\neq (S^1\times S^2,S^1\times \{\pt\})$. By Theorem \ref{thm_closed_non_vanishing_L_disjoint_from_Sigma}, we have $\II(\overline{Y},\overline{L},\overline{\omega}|\overline{\Sigma})\neq 0$, and hence the lemma is proved.
\end{proof}

\begin{proof}[Proof of Theorem \ref{thm_closed_nonvanish_singular}]
 If $\Sigma$ and $L$ do not intersect with the same sign, we can take two consecutive points of $\Sigma\cap L$ on $L$ with opposite signs, and attach a tube to $\Sigma$ along the segment of $L$ bounded by the two intersection points. This  gives a connected surface $\Sigma'$, such that $[\Sigma']=[\Sigma]$, $x_L(\Sigma')=x_L(\Sigma)$, and $\Sigma'$ intersects $L$ at fewer points. Notice that $\II(Y,L,\omega|\Sigma)$ only depends on $[\Sigma]$ and $x_L(\Sigma)$, therefore $\II(Y,L,\omega|\Sigma)=\II(Y,L,\omega|\Sigma')$. By repeating this process, we may assume $\Sigma$ intersects $L$ with the same sign without loss of generality.

 Let $p_1,\cdots,p_n$ be intersection points of $\Sigma$ and $L$. By the assumptions, $n\ge 3$ and $n$ is odd. Since $n\ge 3$, we have $x_L(\Sigma)>0$, therefore $\Sigma$ being $L$--norm-minimizng implies that $\Sigma$ is non-separating in $Y$ and is $L$--incompressible. By Lemma \ref{lem_irreducible_after_excision_along_Sigma}, Lemma \ref{lem_norm_minimizing_singular_excision} and \cite[Theorem 6.6]{XZ:excision}, after taking an excision along $\Sigma$ if necessary, we may assume that $p_1,\cdots,p_n$ belong to distinct components of $L$ without loss of generality. Let $K_1,\cdots,K_n$ be the components of $L$ that contain $p_1,\cdots,p_n$ respectively.

Recall that $n\ge 3$ and $n$ is odd; let $n=2k+1$. Let $u_1,\cdots, u_k$ be disjoint arcs on $\Sigma-\{p_n\}$ such that $\partial u_i=\{p_{2i},p_{2i+1}\}$ for $i=1,\cdots,k$. Let $u=\cup_{i=1}^k u_i$. Excision along $\Sigma$ and \cite[Proposition 6.7]{XZ:excision} yields
	$$\II(Y,L,\omega|\Sigma)\cong \II(Y,L,\omega\cup u|\Sigma). $$
	
Let $N(K_1)$ be a tubular neighborhood of $K_1$ such that the intersection of $N(K_1)$ and $\Sigma$ is a meridian disk, $N(K_1)$ is disjoint from $L-K_1$ and $\omega$, and $\partial N(K_1)$ intersects $\omega\cup u$ transversely at one point. Fix a framing of $K_1$, let $K_1^{(1)}$, $K_1^{(2)}$ be two parallel copies of $K_1$ in $N(K_1)$ that are disjoint from $u$. 

Let $\hat Y=S^1\times S^2$, let $\{q_1,q_2,q_3,q_4\}\subset S^2$, and define $\hat{L}_2, \hat{L}_3, \hat{L}_4\subset \hat Y$ by $\hat{L}_2=S^1\times \{q_1,q_2\}$, $\hat{L}_3=S^1\times \{q_1,q_2,q_3\}$, $\hat{L}_4=S^1\times \{q_1,q_2,q_3,q_4\}$. 
Let $c_1\subset S^2-\{q_3,q_4\}$ be an arc joining $q_1$ and $q_2$, let $c_2\subset S^2$ be a circle that intersects $c_1$ transversely at one point and separates $\{q_1\}$ and $\{q_2,q_3,q_4\}$. Let $\hat{\omega}=\{\pt\}\times c_1\subset \hat Y$, let $\hat{T} = S^1\times c_2\subset \hat Y$, let $\hat \Sigma = \{\pt\}\times S^2\subset \hat Y$. 

Conducting excision on $(Y,L,\omega\cup u)\sqcup (\hat Y,\hat L_4,\hat\omega)$ along  $\partial N(K_1)\sqcup \hat {T}$, we have 
$$
	\II(Y,L,\omega\cup u|\Sigma)\otimes \II(\hat{Y},\hat{L}_4,\hat{\omega}|\hat\Sigma)
	\cong \II(Y,L\cup K_1^{(1)}\cup K_1^{(2)},\omega\cup u|\Sigma)\otimes \II(\hat{Y},\hat{L}_2,\hat{\omega}|\hat\Sigma).
$$
By \cite[Lemma 5.1]{XZ:excision}, we have 
\begin{equation}
\label{eqn_Ugn_rank_1_g=0_n=2,3,4}
	\II(\hat{Y},\hat{L}_2,\hat{\omega}|\hat\Sigma)\cong \II(\hat{Y},\hat{L}_3,\hat{\omega}|\hat\Sigma)\cong \II(\hat{Y},\hat{L}_4,\hat{\omega}|\hat\Sigma)\cong \bC,
\end{equation}
and hence
$$
\II(Y,L,\omega\cup u|\Sigma)\cong \II(Y,L\cup K_1^{(1)}\cup K_1^{(2)},\omega\cup u|\Sigma).
$$

Let $p_1^{(1)}=K_1^{(1)}\cap \Sigma$, $p_1^{(2)}=K_1^{(2)}\cap \Sigma$. Let $v$ be an arc on $\Sigma$ connecting $p_1^{(1)}$ and $p_n$ such that $v$ is disjoint from $\omega\cup u$. Excision along $\Sigma$ and \cite[Proposition 6.7]{XZ:excision} then yields

$$
\II(Y,L\cup K_1^{(1)}\cup K_1^{(2)},\omega\cup u|\Sigma)\cong \II(Y,L\cup K_1^{(1)}\cup K_1^{(2)},\omega\cup u\cup v|\Sigma).
$$

Let $N'(K_1)\subset N(K_1)$ be a solid torus that contains $K_1$ and $K_1^{(2)}$, such that $N'(K_1)$ is disjoint from $K_1^{(1)}\cup \omega\cup v$, and that $\partial N'(K_1)$ intersects $u$ transversely at one point. Conducting excision on $(Y,L\cup K_1^{(1)}\cup K_1^{(2)},\omega\cup u\cup v)\sqcup (\hat Y,\hat L_2,\hat\omega)$ along $\partial N'(K_1)\sqcup \hat T$ yields
$$
\II(Y,L\cup K_1^{(1)}\cup K_1^{(2)},\omega\cup u\cup v|\Sigma)\otimes \II(\hat Y,\hat L_2,\hat\omega)
\cong \II(Y,L\cup K_1^{(1)},\omega\cup u\cup v|\Sigma)\otimes \II(\hat Y,\hat L_3,\hat\omega).
$$
Therefore by \eqref{eqn_Ugn_rank_1_g=0_n=2,3,4},
$$
\II(Y,L\cup K_1^{(1)}\cup K_1^{(2)},\omega\cup u\cup v|\Sigma)\cong \II(Y,L\cup K_1^{(1)},\omega\cup u\cup v|\Sigma).
$$
By Lemma \ref{lem_irreducible_after_adding_parallel_copy}, $Y-L- K_1^{(1)} $ is irreducible. By Lemma \ref{lem_norm_minimizing_after_adding_parallel_copy}, $\Sigma$ minimizes the generalized Thurston norm with respect to $L\cup K_1^{(1)}$. Since $n\ge 3$, we have $x_{L\cup K_1^{(1)}}(\Sigma)>0$.
Therefore by Lemma \ref{lem_non_vanishing_even_number_of_intersections}, $\II(Y,L\cup K_1^{(1)},\omega\cup u\cup v|\Sigma)\neq 0$, and the theorem is proved.
\end{proof}

\section{Instanton Floer homology for links in thickened surfaces}
\label{sec_instanton_Floer_for_links}
In this section, let $\Sigma$ be a connected closed surface, and let $L$ be a link in $(-1,1)\times \Sigma$. The link $L$ defines a link in $([-1,1]/\{-1,1\})\times \Sigma \cong  S^1\times \Sigma$ by taking its image in the quotient space. When there is no risk of confusion, we will abuse notation and use $L$ to denote the image of $L$ in $S^1\times \Sigma$, and use $\{1\}\times\Sigma$ or $\Sigma$ to denote the image of $\{1\}\times \Sigma$ in $S^1\times \Sigma$.

Recall that all curves, surfaces, and 3-manifolds are assumed to be oriented unless otherwise specified, and all maps are assumed to be smooth.

\begin{Definition}
\label{def_SiHI}
Let $L$ be a link in $(-1,1)\times \Sigma$. Define 
$$
\SiHI(L):=\II(S^1\times \Sigma,L,S^1\times \{p\}|\Sigma),
$$
where $p$ is a point on $\Sigma$ such that $(S^1\times\{p\})\cap L=\emptyset$.
\end{Definition}
\begin{remark}
The fundamental class of $S^1\times \{p\}$ in $H_1(S^1\times \Sigma , L;\bZ/2)$ does not depend on $p$, therefore $\SiHI(L)$ is independent of the choice of $p$.
\end{remark}

\begin{remark}
\label{emk_SiHI_of_empty_link}
	By \cite[Proposition 7.4]{KM:suture}, we have $\SiHI(\emptyset)\cong \mathbb{C}$.
\end{remark}

\begin{Definition}
Given a (not necessarily oriented) link cobordism $S:L_1\to L_2$ in $[-1,1]\times (-1,1)\times \Sigma$,
define $\SiHI(S)$ to be the induced map on instanton Floer homology
$$
\SiHI(S)=\II([-1,1]\times S^1\times \Sigma, S, [-1,1]\times S^1\times \{\pt\}):
\SiHI(L_1)\to \SiHI(L_2).
$$
\end{Definition}

\begin{remark}
\label{rmk_sign_link_cobordism}
	Since the cobordism maps are only defined up to signs, 
	 $\SiHI(S)$ is a priori only well-defined up to an overall sign. However, if $S$ is an oriented cobordism between oriented links, there is a canonical choice of homology orientation that fixes the sign of $\SiHI(S)$ such that it satisfies the usual functoriality property. The reader may refer to \cite[Section 4.4]{KM:Kh-unknot} for more details.
\end{remark}

\begin{Lemma}
\label{lem_add_w2_on_Sigma}
Let $\Sigma$, $L$, $p$ be as in Definition \ref{def_SiHI}, let $c\subset \Sigma$ be an embedded closed 1-manifold such that $p\notin c$, let $\omega$ be the image of $[-1,1]\times \{p\}\cup \{1\}\times c$ in $S^1\times \Sigma$. Then 
$$
\SiHI(L) \cong \II(S^1\times \Sigma, L, \omega|\Sigma).
$$
\end{Lemma}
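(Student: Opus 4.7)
The plan is to apply the excision theorem \cite[Theorem 6.6]{XZ:excision} to transfer the curve $c$ between two triples. The approach parallels \cite[Proposition 6.7]{XZ:excision}, which establishes the analogous statement for an arc $u$ on $\Sigma$ with endpoints on $L$; here we handle a closed $1$-manifold $c$.

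First, I would perform a small isotopy within $S^1\times\Sigma\setminus L$ that pushes $\{1\}\times c$ off the distinguished surface $\Sigma=\{1\}\times\Sigma$, replacing it by $\{1-\epsilon\}\times c$. This changes neither the homology class of $\omega$ in $H_1(S^1\times\Sigma\setminus L;\bZ/2)$ nor the corresponding Floer homology. I would then introduce an auxiliary triple $(\hat Y,\hat L,\hat\omega)=(S^1\times\Sigma,\emptyset,S^1\times\{q\})$ with $q\notin c\cup\{p\}$ and distinguished surface $\hat\Sigma=\{1\}\times\Sigma$. The point of the dummy is that on the top eigenspace of $\mu^{orb}(\hat\Sigma)$, the Floer homology $\II(\hat Y,\emptyset,\hat\omega|\hat\Sigma)$ is a rank-one complex vector space, and the analogous statement will hold for the modified $\hat\omega\cup\{1-\epsilon\}\times c$ after the auxiliary reduction discussed below.

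Next, I would apply the excision theorem to $(Y,L,\omega)\sqcup(\hat Y,\hat L,\hat\omega\cup\{1-\epsilon\}\times c)$ along $\Sigma\sqcup\hat\Sigma$, using the identity gluing (this is permissible because neither $L$ nor $\hat L$ meets the distinguished surface). The key point is to arrange the cross-gluing so that the curve $\{1-\epsilon\}\times c$, which sits in a collar of $\hat\Sigma$ on the auxiliary side, migrates onto the $(Y,L)$ side after the cut-and-paste. Tracking the $1$-manifolds carefully through the excision then yields
\begin{equation*}
\II(Y,L,\omega|\Sigma)\otimes\II(\hat Y,\emptyset,\hat\omega\cup\{1-\epsilon\}\times c|\hat\Sigma)\cong\II(Y,L,S^1\times\{p\}|\Sigma)\otimes\II(\hat Y,\emptyset,\hat\omega|\hat\Sigma).
\end{equation*}
Combined with the rank-one auxiliary computation, the tensor factors on the right and left cancel and we obtain the desired isomorphism $\II(Y,L,\omega|\Sigma)\cong\SiHI(L)$.

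The main obstacle is the combinatorial bookkeeping of the excision: one must verify that with the chosen gluing, exactly one copy of $c$ ends up on each side of the new decomposition, so that $\omega_1 = S^1\times\{p\}$ picks up an extra $c$ and the dummy loses one. A secondary point is the auxiliary rank-one statement $\II(\hat Y,\emptyset,\hat\omega\cup\{1-\epsilon\}\times c|\hat\Sigma)\cong\bC$. In the genus-one case relevant to the main theorems this follows from the Kronheimer--Mrowka computation of the instanton Floer homology of $S^1\times T^2$ restricted to the top eigenspace of $\mu(T^2)$; in general it can be reduced by a further excision argument to the case where $c$ is null-homologous on $\Sigma$, in which case a Seifert surface for $c$ inside $\{pt\}\times\Sigma$ gives a null-cobordism of the difference and a direct homology argument applies.
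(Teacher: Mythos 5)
Your proposal is essentially the paper's argument: one application of Floer's excision theorem along the distinguished fiber surface against an auxiliary copy of $S^1\times\Sigma$ carrying the extra curve, followed by a rank-one computation for the auxiliary factor. Two points need tightening. First, the excision bookkeeping is not quite as you describe: cutting each of the two connected pieces along its single copy of the fiber surface and cross-gluing produces \emph{one} connected copy of $S^1\times\Sigma$, not a new two-sided decomposition along which the curve can ``migrate''. The clean version (the paper's) puts $c$ only on the auxiliary side, i.e.\ applies \cite[Theorem 7.7]{KM:suture} to $(S^1\times\Sigma,L,S^1\times\{p\})\sqcup(S^1\times\Sigma,\emptyset,S^1\times\{p\}\cup\{1\}\times c)$; the glued-up triple is then exactly $(S^1\times\Sigma,L,\omega)$, giving
$\II(S^1\times\Sigma,L,S^1\times\{p\}|\Sigma)\otimes\II(S^1\times\Sigma,\emptyset,S^1\times\{p\}\cup\{1\}\times c|\Sigma)\cong\II(S^1\times\Sigma,L,\omega|\Sigma)$
in a single step. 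With $c$ placed on both sides, as in your displayed formula, the glued manifold contains two parallel copies of $c$ in a collar of the excision surface; your formula is still correct, but only after you add the observation that these two copies cobound an annulus disjoint from $L$ and can therefore be erased without changing the Floer group, and you are in effect invoking excision twice rather than once.

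Second, the auxiliary rank-one statement $\II(S^1\times\Sigma,\emptyset,S^1\times\{q\}\cup c|\Sigma)\cong\bC$ is needed for every genus (Section 3 of the paper treats an arbitrary closed surface $\Sigma$) and for an arbitrary embedded closed 1-manifold $c$; it is precisely \cite[Proposition 7.8]{KM:suture}, so no genus-one special case or further reduction is required. The reduction you sketch is the one step I would not accept as written: regluing along a fiber by a twist to change the class of $c$ changes the underlying three-manifold as well, and the Seifert-surface argument applies verbatim only when $[c]=0$ in $H_1(\Sigma;\bZ/2)$, so you should simply quote the Kronheimer--Mrowka computation instead.
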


\begin{proof}
Let $\omega_1=S^1\times\{p\}\subset S^1\times\Sigma$, and let $\omega_2\subset S^1\times\Sigma$ be the image of $\{1\}\times c$, then $\omega=\omega_1\cup\omega_2$. By Definition \ref{def_SiHI}, $\SiHI(L)=\II(S^1\times\Sigma,L,\omega_1)$. Since the image of $\{1\}\times \Sigma$ is disjoint from $L$ and intersects $\omega_0$ transversely at one point, the excision formula \cite[Theorem 7.7]{KM:suture} yields
$$
\II(S^1\times \Sigma, L,\omega_1)\otimes \II(S^1\times\Sigma,\emptyset,\omega_1\cup\omega_2)\cong\II(S^1\times \Sigma, L, \omega|\Sigma).
$$
By \cite[Proposition 7.8]{KM:suture}, we have
$$
\II(S^1\times\Sigma,\emptyset,\omega_1\cup\omega_2)\cong \bC,
$$
and the lemma is proved.
\end{proof}
\begin{Definition}
	Suppose $L_1$ and $L_2$ are two links in $(-1,1)\times\Sigma$. Isotope $L_1$ such that $L_1\subset (-1,0)\times\Sigma$, and isotope $L_2$ such that $L_2\subset (0,1)\times \Sigma$. Define $L_1\sqcup L_2$ to be link given by the disjoint union of $L_1$ and $L_2$ after the isotopies.
\end{Definition}

\begin{remark}
	If the genus of $\Sigma$ is positive, then $L_1 \sqcup L_2$ may be non-isotopic to $L_2\sqcup L_1$. 
\end{remark}

\begin{Lemma}\label{lem_disjoint_union_of_links}
Suppose $L_1$ and $L_2$ are two links in $(-1,1)\times \Sigma$, then
	$$\SiHI(L_1\sqcup L_2)\cong \SiHI(L_1)\otimes \SiHI(L_2).$$
	Moreover, if $c\subset \Sigma$ is a simple closed curve, let $T_c=S^1\times c\subset S^1\times \Sigma$, then the isomorphism above is given by a map 
	$$
	\phi:\SiHI(L_1\sqcup L_2)\to \SiHI(L_1)\otimes \SiHI(L_2)
	$$
	such that $\phi\circ \muu(T_c)=\big(\muu(T_c)\otimes\id+\id\otimes\muu(T_c)\big)\circ\phi$.
\end{Lemma}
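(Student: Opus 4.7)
The plan is to prove both statements by Floer excision on $S^1\times\Sigma$ along two disjoint copies of $\Sigma$ that separate the two links, paired with an auxiliary copy of $(S^1\times\Sigma,\emptyset)$.

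First, isotope so that $L_1\subset(-1,0)\times\Sigma$ and $L_2\subset(0,1)\times\Sigma$, and pick $p\in\Sigma$ with $p\notin c$ and $(S^1\times\{p\})\cap(L_1\sqcup L_2)=\emptyset$. Let $Y_1=S^1\times\Sigma$ carry $L_1\sqcup L_2$ and $\omega_1=S^1\times\{p\}$; let $Y_2=S^1\times\Sigma$ be an auxiliary copy carrying the empty link and $\omega_2=S^1\times\{p\}$. The two surfaces $\Sigma_0=\{0\}\times\Sigma$ and $\Sigma_\infty=\{\pm 1\}\times\Sigma$ in $Y_1$, together with parallel copies $\Sigma_0',\Sigma_\infty'$ in $Y_2$, are disjoint from the links and each meets $\omega_i$ transversely in one point.

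Next I would perform the Kronheimer--Mrowka excision by cutting $Y_1\sqcup Y_2$ along $(\Sigma_0\sqcup\Sigma_0')\cup(\Sigma_\infty\sqcup\Sigma_\infty')$ and regluing by the swap in each pair. A direct check on the gluing identifies the resulting 3--manifold with $(S^1\times\Sigma,L_1,\omega_1')\sqcup(S^1\times\Sigma,L_2,\omega_2')$, where each $\omega_i'$ has the form $S^1\times\{p\}$. Passing to the top eigenspace of $\muu(\Sigma)$ on each side and using Remark~\ref{emk_SiHI_of_empty_link} to identify $\SiHI(\emptyset)\cong\bC$, the excision theorem produces a canonical isomorphism
\[
\phi:\SiHI(L_1\sqcup L_2)\xrightarrow{\ \cong\ }\SiHI(L_1)\otimes\SiHI(L_2),
\]
proving the first statement.

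For the intertwining with $\muu(T_c)$, I would include the torus $T_c=S^1\times c\subset Y_1$ together with a parallel auxiliary torus $T_c'=S^1\times c\subset Y_2$ in the excision data. The cutting surfaces $\Sigma_0,\Sigma_\infty$ meet $T_c$ transversely in the essential circles $\{0\}\times c$ and $\{\pm 1\}\times c$, cutting $T_c$ into two annuli, and likewise for $T_c'$. After the cut-and-reglue, these annuli reassemble into two tori $\tilde T_c^{(1)}\subset\tilde Y_1$ and $\tilde T_c^{(2)}\subset\tilde Y_2$, each an essential torus $S^1\times c$ in its respective $S^1\times\Sigma$. Building a properly embedded surface in the excision cobordism whose two ends are $T_c\sqcup T_c'$ and $\tilde T_c^{(1)}\sqcup\tilde T_c^{(2)}$ and invoking the naturality of $\muu$ under cobordisms yields the identity
\[
\phi\circ\bigl(\muu(T_c)\otimes\id+\id\otimes\muu(T_c')\bigr)=\bigl(\muu(T_c)\otimes\id+\id\otimes\muu(T_c)\bigr)\circ\phi
\]
on $\SiHI(L_1\sqcup L_2)\otimes\SiHI(\emptyset)$, where on the right-hand side each $\muu(T_c)$ is the operator associated to $S^1\times c$ in the corresponding $S^1\times\Sigma$.

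The main obstacle will be to show that the auxiliary term $\muu(T_c')$ vanishes on the one-dimensional space $\SiHI(\emptyset)$, so that the identity above reduces to the formula in the lemma. I would establish this by exploiting the orientation-reversing involution $\sigma:(s,x)\mapsto(-s,x)$ of $Y_2=S^1\times\Sigma$, which set-theoretically preserves $\omega_2$, preserves the surface $\{1\}\times\Sigma$ used to define $\SiHI(\emptyset)$ (together with its orientation, since $\sigma$ is the identity on that surface), and preserves $T_c'$ while reversing both the orientation of $T_c'$ and the orientation of the ambient 3--manifold. Tracing the induced action on the top eigenspace of $\muu(\{1\}\times\Sigma)$ through the identification $\II(-Y_2)\cong\II(Y_2)^*$, one finds that on the one-dimensional space $\SiHI(\emptyset)$ the scalar $\muu(T_c')$ equals its own negative and hence vanishes. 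With this in hand, the displayed intertwining becomes precisely the formula asserted in the lemma.
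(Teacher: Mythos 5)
Your route is genuinely different from the paper's, and the difference matters. The paper performs one excision on the disjoint union $(S^1\times \Sigma,L_1,S^1\times\{p\})\sqcup (S^1\times \Sigma,L_2,S^1\times\{p\})$ along a single copy of $\{1\}\times\Sigma$ in each factor; the re-glued manifold is a single $S^1\times\Sigma$ containing $L_1\sqcup L_2$, so no auxiliary copy of $(S^1\times\Sigma,\emptyset)$ enters, and since $T_c$ is cobordant to $T_c\sqcup T_c$ inside the excision cobordism, the identity $\phi\circ \muu(T_c)=\big(\muu(T_c)\otimes\id+\id\otimes\muu(T_c)\big)\circ\phi$ falls out of naturality with no extra terms. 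Your configuration (cutting the manifold carrying $L_1\sqcup L_2$ together with an auxiliary empty copy along two fibers in each) also yields the isomorphism after tensoring with $\SiHI(\emptyset)\cong\bC$ as in Remark \ref{emk_SiHI_of_empty_link}, but it creates the parasitic term $\id\otimes\muu(T_c')$ on $\SiHI(\emptyset)$, which you must then show vanishes. That extra step is an artifact of your setup, and it is exactly the step you leave least justified.

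The vanishing of $\muu(T_c')$ on the line $\SiHI(\emptyset)$ is true, but your duality argument is only sketched at its sign-sensitive point: you need that under $\II(-Y)\cong \II(Y)^*$ the operator $\muu(a)$ corresponds to its transpose with no additional sign, for otherwise your computation degenerates to the vacuous identity $\lambda=\lambda$ instead of $\lambda=-\lambda$. (You also cannot shortcut this by quoting Lemma \ref{lem_range_of_eigenvalues} with $n=0$: its even-intersection case is proved using the very lemma at hand, so that would be circular.) A cleaner way to close the gap, using only orientation-preserving functoriality: choose a hyperelliptic (for $g=1$, elliptic) involution $\iota$ of $\Sigma$ fixing the point $p$ and acting as $-1$ on $H_1(\Sigma;\bZ)$; then $\id_{S^1}\times\iota$ is an orientation-preserving self-diffeomorphism of the triple $(S^1\times\Sigma,\emptyset,S^1\times\{p\})$ of Definition \ref{def_SiHI} that preserves the classes of $\{1\}\times\Sigma$ and of the point, hence preserves the one-dimensional space $\SiHI(\emptyset)$, while sending $[T_c']$ to $-[T_c']$; since $\muu$ depends linearly on the homology class, naturality gives $\lambda=-\lambda$ directly. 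With that vanishing supplied (or, more efficiently, with the paper's one-step excision, which never needs it), your argument is correct.
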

\begin{proof}
	Let $p\in \Sigma$ such that $S^1\times \{p\}$ is disjoint from $L_1$ and $L_2$.
	Excision on  $(S^1\times  \Sigma,L_1,S^1\times\{p\})\sqcup (S^1\times  \Sigma,L_2,S^1\times\{p\})$ along $\{1\}\times\Sigma\sqcup \{1\}\times\Sigma$ yields 
	$$
	\SiHI(L_1\sqcup L_2)\cong \SiHI(L_1)\otimes \SiHI(L_2).
	$$
	The isomorphism is induced by a cobordism map from $S^1\times\Sigma$ to $S^1\times \Sigma\sqcup S^1\times \Sigma$ where $T_c$ is cobordant to $T_c\sqcup T_c$, therefore the isomorphism intertwines the action of $\muu(T_c)$ on the left with $\muu(T_c)\otimes\id+\id\otimes\muu(T_c)$ on the right.
\end{proof}

Let $A\cong [-1,1]\times S^1$ be an annulus, recall that the annular instanton Floer homology was defined by \cite[Definition 4.1]{AHI} for a link $L\subset (-1,1)\times A$.

\begin{Lemma}
\label{lem_torus_reduce_to_AHI}
Suppose $\Sigma$ has genus $1$, let $c\subset \Sigma$ be an essential simple closed curve, let $N(c)\subset \Sigma$ be an open tubular neighborhood of $c$. Suppose $L\subset (-1,1)\times N(c)$. Let $L_{N(c)}$ be the link $L$ viewed as a link in $(-1,1)\times N(c)$, then 
\begin{equation}\label{eqn_torus_reduce_to_AHI}
	\SiHI(L)\cong \AHI(L_{N(c)}).
\end{equation}
\end{Lemma}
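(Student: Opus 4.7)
The plan is to use the torus excision theorem from \cite[Theorem 7.7]{KM:suture} to identify $\SiHI(L)$ with $\AHI(L_{N(c)})$. First, I would recall the construction of $\AHI(L_{N(c)})$ from \cite[Definition 4.1]{AHI}: the annulus $N(c)$ embeds in $S^2$ as a neighborhood of an equator splitting $S^2$ into $N(c)$ and two complementary disks $D_1,D_2$, and $\AHI(L_{N(c)})$ is realized as a generalized eigenspace of the singular instanton Floer homology of $S^1\times S^2$, where the link is $L$ together with two axis circles $S^1\times\{p_1,p_2\}$ (one inside each $D_i$) and an appropriate $\omega$-marking.

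For the excision, note that $c$ is non-separating on the torus $\Sigma$, so $A':=\Sigma\setminus N(c)$ is also an annulus, and the two boundary circles $c_1,c_2$ of $\overline{N(c)}$ are parallel essential curves in $\Sigma$. The tori $T_i:=S^1\times c_i\subset S^1\times\Sigma$ are incompressible, disjoint from $L$, and decompose $S^1\times\Sigma$ into $S^1\times\overline{N(c)}$ (containing $L$) glued to $S^1\times A'$. The corresponding tori $T_i':=S^1\times\partial D_i\subset S^1\times S^2$ decompose $S^1\times S^2$ into $S^1\times\overline{N(c)}$ glued to $S^1\times(D_1\sqcup D_2)$ (containing the axes). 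I would then apply torus excision to the disjoint union $(S^1\times\Sigma, L, S^1\times\{p\})\sqcup(S^1\times S^2, S^1\times\{p_1,p_2\},\omega')$ along $(T_1\sqcup T_2)\sqcup(T_1'\sqcup T_2')$, with gluing maps chosen to reattach $S^1\times\overline{N(c)}$ (with $L$) to $S^1\times(D_1\sqcup D_2)$ (with axes), producing the closed manifold that computes $\AHI(L_{N(c)})$. The complementary reglue produces $S^1\times T^2$ with no link, since gluing the two annuli $A'$ and $\overline{N(c)}$ along their boundaries recovers $T^2$.

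The excision theorem yields
\begin{equation*}
\SiHI(L)\otimes V_1 \cong \AHI(L_{N(c)})\otimes V_2,
\end{equation*}
where $V_1$ is the relevant eigenspace of the Floer homology of $S^1\times S^2$ with the two axis circles (and appropriate $\omega$), and $V_2$ is the relevant eigenspace of the Floer homology of $S^1\times T^2$ with empty link. By \cite[Lemma 5.1]{XZ:excision} we expect $\dim V_1 = 1$, and by an analogous eigenspace computation for the trivial circle bundle over $T^2$ along the lines of \cite[Proposition 7.8]{KM:suture} we expect $\dim V_2 = 1$, from which the desired isomorphism follows.

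The main obstacle is matching the $\omega$-data on both sides of the excision: on the $\SiHI$ side we have $\omega = S^1\times\{p\}$ with $p\in A'$, while $\omega'$ in the $\AHI$ setup may involve arcs near the axis circles. Using Lemma~\ref{lem_add_w2_on_Sigma} (and its analog on the sphere side), one can modify each $\omega$ by adding $\{1\}\times c'$ for closed $1$-manifolds $c'\subset\Sigma$ or $c'\subset S^2$, and this flexibility should suffice to choose gluing maps making the $\omega$-classes agree across the excision. One must also verify that the generalized eigenspaces of $\muu(\Sigma)$ and $\muu(\{\pt\}\times S^2)$ correspond correctly under the excision, which follows from the standard formulation of torus excision for generalized eigenspaces.
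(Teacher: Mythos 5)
Your proposal is correct in substance and follows the same basic strategy as the paper: transplant the thickened annulus containing $L$ into the closed-up configuration computing $\AHI$ by torus excision, then dispose of the auxiliary factors by rank-one computations. The implementation, however, differs. You excise along two pairs of tori (both components of $S^1\times\partial\overline{N(c)}$ paired with $S^1\times\partial D_1$ and $S^1\times\partial D_2$), which leaves a complementary closed piece $(S^1\times T^2,\emptyset,\omega'')$ whose contribution must be killed by a separate rank-one computation; this is fine, since the residual $S^1\times\{p\}$ lands in that piece and the computation is exactly the one from \cite[Proposition 7.8]{KM:suture} already invoked in Lemma \ref{lem_add_w2_on_Sigma}. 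The paper instead uses that cutting the torus $\Sigma$ along a single boundary circle $c_1$ of $N(c)$ already yields an annulus: it excises along the single pair $T=S^1\times c_1$ and $\hat T=S^1\times\hat c$, where $\hat c$ separates the two extra circles inside $S^1\times S^2$, so the reglued manifold is the $\AHI$ closure with no leftover piece, and the only auxiliary input is $\II(S^1\times S^2,S^1\times\{q_1,q_2\},\hat\omega)\cong\bC$. Two points you hedged should be made explicit, and both resolve exactly as you guessed: since $S^1\times\{p\}$ meets each cutting torus an even number of times, one must first add $\{1\}\times c'$ for a curve $c'$ dual to $c$ (it crosses each component of $\partial N(c)$ once), which leaves $\SiHI(L)$ unchanged by Lemma \ref{lem_add_w2_on_Sigma} --- this is precisely the role of the curve $c_2$ in the paper; and the gluing maps must match the circles $\Sigma\cap T_i$ with $\hat\Sigma\cap T_i'$ and the $\omega$-points, after which you should verify that the resulting closed triple, with its inherited $\omega$, is the one of \cite[Definition 4.1]{AHI} (an arc joining the two extra circles), since that identification ``by definition'' is where the lemma is actually earned.
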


\begin{proof}
Let $p\in \Sigma-N(c)$. 
Let $c_1$ be one of the connected components of $\partial N(c)$,
let $c_2\subset \Sigma$ be a simple closed curve that intersects $c_1$ transversely at one point. 
Let $T=S^1\times c_1\subset S^1\times \Sigma$. 

Let $\hat{Y}=S^1\times S^2$. Write $S^2$ as the union of two copies of disks glued along the boundary, and label the two disks as $D^{(1)}$ and $D^{(2)}$. Let $q_1,q_2$ be two distinct points in the interior of $D^{(1)}$, let $\hat{L}_1\subset S^1\times D^{(1)}\subset \hat{Y}$ be the link given by $S^1\times \{q_1,q_2\}$.  Let $\hat{c}$ be a circle in $D^{(1)}$ separating $q_1$ and $q_2$, and let $\hat{T}=S^1\times \hat{c}\subset \hat{Y}$. Let $\hat{\omega}$ be an arc in $S^1\times D^{(1)}$ connecting the two components of $\hat{L}_1$ and intersecting $\hat{T}$ transversely at one point. Let $\hat{\Sigma}=\{\pt\}\times S^2\subset \hat{Y}$.

Notice that $N(c)$ is diffeomorphic to $(-1,1)\times S^1$. Choose a diffeomorphism from $(-1,1)\times (-1,1)$ to $D^{(2)}$, one obtains a diffeomorphism $\varphi$ from $(-1,1)\times N(c)$ to $D^{(2)}\times S^1 \cong S^1\times D^{(2)}$. Let $\hat{L}_2\subset S^1\times D^{(2)}$ be the link given by $\varphi(L)$. 

Recall that $\Sigma$ is a torus, and we identify $S^1$ with $[-1,1]/\{-1,1\}$.
Excision on $(S^1\times\Sigma, L, S^1\times\{p\}\cup\{1\}\times c_2)\sqcup (\hat{Y},\hat{L}_1,\hat{\omega})$ along $T\sqcup \hat{T}$ yields
$$
\II(S^1\times\Sigma,L,S^1\times\{p\}\cup \{1\}\times c_2|\Sigma)\otimes \II(\hat{Y},\hat{L}_1,\hat{\omega}|\hat{\Sigma})
\cong \II(\hat{Y},\hat{L}_1\cup \hat{L}_2,\hat{\omega}|\hat{\Sigma}).
$$
By Lemma \ref{lem_add_w2_on_Sigma}, $\II(S^1\times\Sigma,L,S^1\times\{p\}\cup \{1\}\times c_1|\Sigma)\cong \SiHI(L)$. By \cite[Proposition 5.1]{XZ:excision}, $\II(\hat{Y},\hat{L}_1,\hat{\omega}|\hat{\Sigma})\cong \bC$. By definition \cite[Definition 4.1]{AHI}, $\II(\hat{Y},\hat{L}_1\cup \hat{L}_2,\hat{\omega}|\hat{\Sigma})\cong \AHI(L_{N(c)})$, therefore the lemma is proved.
\end{proof}

Lemma \ref{lem_torus_reduce_to_AHI} also holds for surfaces with higher genera.
\begin{Lemma}
\label{lem_surface_reduce_to_AHI}
	Suppose $\Sigma$ has genus $g\ge 1$, let $c\subset \Sigma$ be a non-separating simple closed curve, let $N(c)\subset \Sigma$ be an open tubular neighborhood of $c$. Suppose $L\subset (-1,1)\times N(c)$. Let $L_{N(c)}$ be the link $L$ viewed as an annular link in $(-1,1)\times N(c)$. Then $\SiHI(L)\cong \AHI(L_{N(c)})$.
\end{Lemma}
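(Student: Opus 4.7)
The plan is to argue by induction on the genus $g$ of $\Sigma$, with the base case $g = 1$ being Lemma \ref{lem_torus_reduce_to_AHI}. So assume $g \ge 2$ and that the statement is already known for closed surfaces of genus $g - 1$.

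Since $g \ge 2$, the complement $\Sigma - \overline{N(c)}$ is a connected surface of genus $g - 1 \ge 1$ with two boundary components and therefore contains a simple closed curve $\alpha$ that is non-separating in $\Sigma$ and disjoint from $N(c)$. Pick a simple closed curve $\beta \subset \Sigma$ intersecting $\alpha$ transversely at exactly one point, and a base point $p \in \Sigma$ avoiding $\alpha$, $\beta$, and $N(c)$. Set $\omega = S^1 \times \{p\} \cup \{1\} \times \beta$; by Lemma \ref{lem_add_w2_on_Sigma} we have $\SiHI(L) \cong \II(S^1 \times \Sigma, L, \omega \mid \Sigma)$, and by construction $\omega \cdot T_\alpha = 1$ where $T_\alpha := S^1 \times \alpha$. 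Apply the torus excision theorem \cite[Theorem 7.7]{KM:suture} to the pair $(S^1 \times \Sigma, L, \omega) \sqcup (\hat{Y}, \hat{L}_1, \hat{\omega})$ along $T_\alpha \sqcup \hat{T}$, taking the auxiliary data $(\hat{Y}, \hat{L}_1, \hat{\omega}, \hat{T})$ exactly as in the proof of Lemma \ref{lem_torus_reduce_to_AHI}. The geometric effect on the fiber is to cut $\Sigma$ along $\alpha$ (producing a genus $g-1$ surface with two boundary circles) and then cap these circles with the two disks of $S^2 - \hat{c}$, yielding a closed surface $\Sigma'$ of genus $g - 1$ that still contains $N(c)$ (hence $L$) intact. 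Using $\II(\hat{Y}, \hat{L}_1, \hat{\omega} \mid \hat{\Sigma}) \cong \bC$ from \cite[Proposition 5.1]{XZ:excision}, the excision yields
\[
\SiHI(L) \;\cong\; \II(S^1 \times \Sigma', \, L \cup \hat{L}_1, \, \omega' \mid \Sigma'),
\]
where the two components of $\hat{L}_1 = S^1 \times \{q_1, q_2\}$ now sit one in each of the two newly attached disk caps of $\Sigma'$, both disjoint from $N(c)$.

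To close the induction, I would then identify the right-hand side with $\SiHI_{\Sigma'}(L)$, so that the inductive hypothesis applied to $\Sigma'$ gives $\SiHI_{\Sigma'}(L) \cong \AHI(L_{N(c)})$. The idea is to strip off each extra wrapping strand of $\hat{L}_1$ by a further torus excision: for each new disk cap $D_i \subset \Sigma'$ containing $S^1 \times \{q_i\}$, excise along the torus $S^1 \times \partial D_i$, paired with a standard auxiliary triple of the form $(S^1 \times S^2, S^1 \times \{p_1', p_2'\}, \hat{\omega}')$. The piece of the swap-glued manifold that contains $S^1 \times \{q_i\}$ is a copy of $S^1 \times S^2$ carrying a disjoint union of wrapping circles, and its contribution to the relevant generalized $\muu$-eigenspace is $\bC$ by the analogue of \cite[Lemma 5.1]{XZ:excision} (asserting $\II(S^1 \times S^2, S^1 \times \{n \text{ points}\}, \hat{\omega} \mid \hat{\Sigma}) \cong \bC$ for small $n$), while the other side of the excision leaves behind $\SiHI_{\Sigma'}(L)$. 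The main obstacle is verifying this absorption step rigorously: one must choose the absorbing excisions so that the intermediate Floer homologies are concentrated in one-dimensional generalized $\muu$-eigenspaces, carefully tracking both the added auxiliary strands and the $\omega$-classes, so that after removing both strands of $\hat{L}_1$ one indeed recovers the hypothesized $\SiHI_{\Sigma'}(L)$ with the correct $\omega$ (up to the flexibility afforded by Lemma \ref{lem_add_w2_on_Sigma}).
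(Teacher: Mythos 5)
Your inductive strategy has a genuine gap exactly at the step you flag as ``the main obstacle,'' and that step cannot be carried out in the way you describe. After your first excision along $T_\alpha\sqcup\hat T$ you hold $\II(S^1\times\Sigma',\,L\cup\hat L_1,\,\omega'\mid\Sigma')$, where the two components of $\hat L_1$ are \emph{vertical} circles $S^1\times\{q_i\}$, each meeting the horizontal surface $\Sigma'$ once (so the relevant eigenvalue is $2(g-1)+2-2=2g-2$, not the $2(g-1)-2$ needed for $\SiHI_{\Sigma'}(L)$). Excision along vertical tori can never simply delete such strands: your proposed move of excising along $S^1\times\partial D_i$ against an auxiliary $(S^1\times S^2, S^1\times\{p_1',p_2'\},\hat\omega')$ only trades the wrapping circle in $D_i$ for the wrapping circle $S^1\times\{p_j'\}$ sitting in the new cap (and drags in an $\omega$--arc ending on it), so the count of strands through $\Sigma'$ is unchanged. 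The only way to consume both strands is to split off a closed piece containing them, e.g.\ by self-excising along $S^1\times\partial D_1\sqcup S^1\times\partial D_2$ so that the two caps close up to the standard $(S^1\times S^2,\hat L_1,\hat\omega)$ factor; but then the complementary piece $\Sigma'-D_1-D_2$ has genus $g-1$ with two boundary circles, and regluing those circles returns a genus-$g$ surface, so this excision merely undoes your first one and the argument is circular. Also note that a single vertical strand cannot be split off on its own, since $(S^1\times S^2, S^1\times\{\pt\},\cdot)$ has vanishing Floer homology. So the identification ``the other side of the excision leaves behind $\SiHI_{\Sigma'}(L)$'' is not available by these means, and the induction does not close.

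For comparison, the paper avoids the induction entirely: it performs one self-excision of $(S^1\times\Sigma, L, S^1\times\{p\}\cup\{1\}\times c_3)$ along the two vertical tori $S^1\times c_1\sqcup S^1\times c_2$ lying over $\partial N(c)$, where $c_3$ meets $c_1$ and $c_2$ once each so the admissibility condition on $\omega$ holds. Regluing sends the piece $S^1\times N(c)$ containing $L$ to $S^1\times T$ with $T$ a torus, handled by Lemma \ref{lem_torus_reduce_to_AHI}, while the piece $S^1\times(\Sigma-N(c))$ recloses to $S^1\times\Sigma$ with empty link, contributing $\SiHI(\emptyset)\cong\bC$ by Lemma \ref{lem_add_w2_on_Sigma} and Remark \ref{emk_SiHI_of_empty_link}. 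In other words, the genus-$g$ case is reduced to the genus-$1$ case in a single step, with the ``leftover'' being an empty-link factor rather than extra wrapping strands; this is precisely the structural reason your stripping step is both necessary in your scheme and avoidable altogether.
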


\begin{proof}
	Let $c_1, c_2$ be the two components of $\partial N(c)$,  let $c_3$ be a simple closed curve on $\Sigma$ that intersects $c_1$ and $c_2$ transversely at one point. Let $p\in\Sigma-(N(c)\cup c_3)$.
	
	Let $T$ be an (abstract) torus, let $c'\subset T$ be an essential simple closed curve on $T$, let $p'\in T-c'$ be a point. Let $N(c')$ be an open tubular neighborhood of $c'$ in $T$, fix a diffeomorphism from $N(c)$ to $N(c')$, one obtains a diffeomorphism $\varphi$ from $(-1,1)\times N(c)$ to $(-1,1)\times N(c')$. Let $c_1'\subset T$ be a simple closed curve that intersects $c$ transversely at one point. Let $L'=\varphi(L)$.
	
	As before, we identify $S^1$ with $[-1,1]/\{-1,1\}$.
	 Excision on $(S^1\times \Sigma,L,S^1\times \{p\}\cup \{1\}\times c_3)$ along $S^1\times c_1\sqcup S^1\times c_2$ yields
	\begin{multline*}
			\II(S^1\times \Sigma, L, S^1\times \{p\} \cup \{1\}\times c_3) 
			\\
			\cong \II(S^1\times\Sigma, \emptyset, S^1\times \{p\} \cup \{1\}\times c_3) \otimes \II(S^1\times T,L',\{1\}\times c_1').	
	\end{multline*}
	
By Lemma \ref{lem_add_w2_on_Sigma}, $\II(S^1\times \Sigma, L, S^1\times \{p\} \cup \{1\}\times c_3)\cong \SiHI(L)$. 
By Remark \ref{emk_SiHI_of_empty_link} and Lemma \ref{lem_add_w2_on_Sigma}, $\II(S^1\times\Sigma, \emptyset, S^1\times \{p\} \cup \{1\}\times c_3)\cong \SiHI(\emptyset)\cong \bC$. Notice that the triple $(S^1\times T,L',\{1\}\times c_1')$ is diffeomorphic to the triple $(S^1\times T,L',S^1\times p')$, therefore by Lemma \ref{lem_torus_reduce_to_AHI}, $\II(S^1\times T,L',\{1\}\times c_1')\cong \AHI(L_{N(c)})$, and the lemma is proved.
\end{proof}

Lemma \ref{lem_surface_reduce_to_AHI} has the following immediate conseqences.
\begin{Lemma}
\label{lem_surface_knot_rank_2}
Let $c\subset \Sigma$ be a contractible or non-separating simple closed curve, and let $K=\{0\}\times c\subset S^1\times\Sigma$. Then $\SiHI(K)\cong \mathbb{C}^2$.
\end{Lemma}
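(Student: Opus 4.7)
The plan is to reduce both cases to a standard rank computation in annular instanton Floer homology via Lemma \ref{lem_surface_reduce_to_AHI}, exploiting the fact that a single simple closed curve in the annulus, whether essential or trivial, has annular instanton Floer homology of rank $2$.

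When $c$ is non-separating, this reduction is immediate. I would let $N(c)\subset \Sigma$ be an annular tubular neighborhood of $c$; then $K=\{0\}\times c$ lies in $(-1,1)\times N(c)$, and viewed as an annular link $K_{N(c)}$ it is precisely the core circle of the annulus. Lemma \ref{lem_surface_reduce_to_AHI} then gives $\SiHI(K)\cong \AHI(K_{N(c)})$, and the right hand side has rank $2$ by the standard computation of $\AHI$ on a single essential simple closed curve in the annulus (see, e.g., \cite{AHI}).

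When $c$ is contractible, it bounds a disk in $\Sigma$, so $K$ bounds a disk in $\{0\}\times\Sigma$ and can be isotoped into a small $3$-ball $B\subset(-1,1)\times \Sigma$. Assuming $\Sigma$ has positive genus, I would choose a non-separating simple closed curve $c'\subset\Sigma$ whose annular tubular neighborhood $N(c')$ contains the image of $B$ in $\Sigma$. Then $K\subset(-1,1)\times N(c')$, and applying Lemma \ref{lem_surface_reduce_to_AHI} with $c'$ in place of $c$ identifies $\SiHI(K)$ with $\AHI$ of a trivial (disk-bounding) circle in the annulus, which again has rank $2$.

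The substantive content is therefore packaged into the reduction lemma together with the two explicit $\AHI$ calculations for a single circle in the annulus, so I do not expect significant additional obstacles. The only subtle point is the case $\Sigma\cong S^2$, in which there are no non-separating curves and Lemma \ref{lem_surface_reduce_to_AHI} does not directly apply; if needed, this can be handled by a separate excision argument along a splitting sphere reducing to $\II(S^3,\text{unknot})\cong \mathbb{C}^2$, but this case does not appear to be used in the paper's intended applications.
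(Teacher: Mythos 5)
Your proposal is correct and follows essentially the same route as the paper: the paper's proof is exactly the reduction via Lemma \ref{lem_surface_reduce_to_AHI} combined with the rank-$2$ computation of $\AHI$ for a single circle in the thickened annulus (\cite[Example 4.2]{AHI}), with your treatment of the contractible case (placing the disk-bounding circle inside an annular neighborhood of a non-separating curve) being the implicit step the paper leaves to the reader. Your remark about $\Sigma\cong S^2$ is a reasonable caveat but is not needed for the paper's applications, where $\Sigma$ has positive genus.
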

\begin{proof}
	The lemma follows from Lemma \ref{lem_surface_reduce_to_AHI} and \cite[Example 4.2]{AHI}.
\end{proof}

\begin{Lemma}
\label{lem_rank_at_least_2^n_for_contractible_links}
If $L\subset (-1,1)\times\Sigma$ is a link contained in a 3-ball, then 
$$\dim \SiHI(L)\ge 2^n,$$ where $n$ is the number of components of $L$.
\end{Lemma}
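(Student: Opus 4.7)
The strategy is to reduce the claim to the corresponding rank bound for annular instanton Floer homology via Lemma \ref{lem_surface_reduce_to_AHI}, which has already been established in the authors' prior work on the annular setting.

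First, since $L$ is contained in a $3$-ball $B\subset (-1,1)\times\Sigma$, after an ambient isotopy I may assume $B\subset (-\epsilon,\epsilon)\times D$ for a small embedded open disk $D\subset\Sigma$. In the case $g(\Sigma)\geq 1$, the disk $D$ can be enclosed in an open tubular neighborhood $N(c)$ of a non-separating simple closed curve $c\subset \Sigma$, so that $L\subset (-1,1)\times N(c)$. Applying Lemma \ref{lem_surface_reduce_to_AHI} yields
$$
\SiHI(L) \cong \AHI(L_{N(c)}),
$$
where $L_{N(c)}$ is an $n$-component annular link still contained in a $3$-ball. The desired rank bound thus reduces to the analogous statement $\dim \AHI(L')\geq 2^n$ for every $n$-component annular link $L'$ contained in a $3$-ball. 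This annular analogue is established in \cite{AHI}; in outline, the proof there uses (a) the unknot computation giving $\AHI(U)\cong\bC^2$, (b) a disjoint-union/K\"unneth formula handling the unlink case (analogous to Lemma \ref{lem_disjoint_union_of_links}), and (c) a skein-theoretic or excision argument extending the bound from unlinks to arbitrary links in a ball.

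When $g(\Sigma)=0$, i.e.\ $\Sigma=S^2$, the annular reduction is unavailable because $S^2$ admits no non-separating simple closed curve. Instead, I would perform a sphere excision: take a small $2$-sphere $S\subset (-1,1)\times S^2$ bounding the $3$-ball $B\supset L$, and excise $(S^1\times S^2,L,S^1\times\{p\})$ along $S$ against a reference admissible triple built from $S^3$ containing a parallel ball. Since $\SiHI(\emptyset)\cong\bC$, this excision should identify $\SiHI(L)$ with the classical singular instanton Floer homology $I^\#(S^3, L)$ up to a rank-one tensor factor, and the claim then follows from the known classical rank bound $\dim I^\#(S^3, L; \bC)\geq 2^n$ proved in \cite{KM:Kh-unknot}.

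The main obstacle will be verifying the annular rank bound in the precise form required (if it is not stated exactly as needed in \cite{AHI}, one would have to adapt the skein/excision argument in that setting) and, for the $\Sigma=S^2$ case, correctly identifying the generalized eigenspace of $\muu(\Sigma)$ on both sides of the sphere excision while ensuring admissibility of the reference triple. Once both of these technical points are verified, the entire argument is a short chain of reductions built on Lemma \ref{lem_surface_reduce_to_AHI}, Lemma \ref{lem_surface_knot_rank_2}, and Lemma \ref{lem_disjoint_union_of_links}.
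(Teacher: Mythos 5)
Your main argument is essentially the paper's proof: the paper deduces this lemma from Lemma \ref{lem_surface_reduce_to_AHI} together with \cite[Lemma 8.1]{XZ:excision}, which is precisely the annular statement you reduce to, namely that an $n$--component annular link contained in a $3$--ball satisfies $\dim\AHI\ge 2^n$. Two caveats. The annular bound in the form you need is the quoted lemma of \cite{XZ:excision}, and your fallback sketch for reproving it is the weak point: step (c) would not work as stated, since skein exact triangles do not propagate lower bounds from unlinks to arbitrary links in a ball; the standard mechanism is instead to identify $\AHI$ of a link in a ball with the singular instanton homology $I^\#$ of the link in $S^3$ (via excision) and invoke the known $2^n$ bound there, going back to \cite{KM:Kh-unknot}. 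Finally, your separate treatment of $\Sigma=S^2$ is unnecessary for this paper --- the reduction lemma and every application of the present lemma take place for genus at least one, indeed for the torus, and the paper's own proof does not cover genus zero either --- and as written it is also unjustified: instanton excision is performed along tori (or higher--genus surfaces), not along spheres, so the proposed ``sphere excision'' identifying $\SiHI(L)$ with $I^\#(S^3,L)$ up to a rank--one factor would require a genuinely different argument.
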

\begin{proof}
	The result follows from Lemma \ref{lem_surface_reduce_to_AHI} and \cite[Lemma 8.1]{XZ:excision}
\end{proof}

The following lemma is a useful property of the $\muu$--map.

\begin{Lemma}
\label{lem_spectrum_symmetric_at_zero}
	Let $(Y,L,\omega)$ be admissible, and let $\Sigma\subset Y$ be a surface. Let $\lambda\in \bC$, then the dimension of the generalized eigenspace of $\muu(\Sigma)$ on $\II(Y,L,\omega)$ with eigenvalue $\lambda$ is the same as the dimension of the generalized eigenspace of $\muu(\Sigma)$ on $\II(Y,L,\omega)$ with eigenvalue $-\lambda$.
\end{Lemma}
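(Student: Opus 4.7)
The plan is to produce an invertible linear map $\tau : \II(Y,L,\omega) \to \II(Y,L,\omega)$ that commutes with $\muu(\pt)$ (so it preserves the generalized eigenspace decomposition used to define $\II(Y,L,\omega)_{(2)}$) and that anticommutes with $\muu(\Sigma)$, i.e.\ $\tau \muu(\Sigma) = -\muu(\Sigma)\tau$. Once $\tau$ is in hand, for any $\lambda \in \bC$ and any $v$ in the generalized $\lambda$-eigenspace of $\muu(\Sigma)$, one has
\[
  (\muu(\Sigma) + \lambda)^{N}\tau(v) = \tau \bigl(-(\muu(\Sigma) - \lambda)\bigr)^{N} v = 0
\]
for $N$ large, so $\tau$ restricts to an isomorphism from the generalized $\lambda$-eigenspace onto the generalized $(-\lambda)$-eigenspace, and the lemma follows.

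To construct $\tau$, I would exploit the fact that $\muu(\Sigma)$ is realized as an endomorphism of the Floer chain complex by inserting the four-dimensional cohomology class $\muu(\Sigma)$ on the cylinder $[0,1]\times Y$, which up to the usual normalization is given (away from the singular locus) by a multiple of $\frac{1}{4}p_1(\mathrm{ad}\, \bE)/[\Sigma]$. The idea is to introduce an automorphism of the cylinder $[0,1]\times Y$, supported in a neighborhood of $\Sigma$, which reverses orientation on $\Sigma$ while preserving all other data used in the construction of $\II(Y,L,\omega)$; this automorphism flips the sign of the cutting-down class $\muu(\Sigma)$ while acting trivially on $\muu(\pt)$ and on the Floer grading. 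The associated cobordism map (together with its inverse coming from the reverse automorphism) gives the desired $\tau$.

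A cleaner packaging, which is probably what I would use in practice, is the following algebraic input. The eigenvalues of $\muu(\Sigma)$ on $\II(Y,L,\omega)_{(2)}$ lie in a finite set that, by the combination of the Fukaya--Kronheimer--Mrowka structure theorem for the action of $\muu(\Sigma)$ in the top eigenspace of $\muu(\pt)$ and its singular analogue developed in \cite{KM:Kh-unknot, KM:YAFT}, is closed under $\lambda \mapsto -\lambda$ with equal multiplicities. Concretely, one reduces to the case of $\bE$-bundles on $[0,1]\times Y$ whose associated projective bundle admits a second-homology class pairing nontrivially with $[\Sigma]$, and the Weyl-group $\mathbb{Z}/2$-symmetry of the reducible locus on $\Sigma$ supplies $\tau$ via the cobordism-map machinery.

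The main obstacle is verifying that the proposed automorphism--supported in a neighborhood of $\Sigma$ and compatible with $L$, with $\omega$, and with the orbifold structure along $L$--genuinely extends to a well-defined self-cobordism of the singular triple $(Y,L,\omega)$ and that the induced Floer map anticommutes with $\muu(\Sigma)$ on the nose rather than merely up to lower-order terms. Once this is settled (by a direct dimension/energy calculation on the moduli space of singular instantons on the mapping cylinder), the symmetry of the spectrum is immediate from the displayed computation above.
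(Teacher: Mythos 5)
Your opening reduction is fine and is in fact the same algebraic skeleton as the paper's argument: once you have an invertible $\tau$ with $\tau\,\muu(\Sigma)=-\muu(\Sigma)\,\tau$, conjugation identifies the generalized $\lambda$- and $(-\lambda)$-eigenspaces. The gap is that the construction of $\tau$, which is the entire content of the lemma, does not close as you describe it. A diffeomorphism ``supported in a neighborhood of $\Sigma$'' that reverses the orientation of $\Sigma$ does not exist: on a tubular neighborhood $N(\Sigma)\cong \Sigma\times(-1,1)$, a map that is the identity near $\partial N(\Sigma)$ preserves the two ends and the ambient orientation, hence cannot reverse the orientation of $\Sigma\times\{0\}$; equivalently, any self-map of $Y$ that is the identity outside $N(\Sigma)$ acts as the identity on $[\Sigma]\in H_2(Y)$, so it conjugates $\muu(\Sigma)$ to itself, not to $-\muu(\Sigma)$. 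Globally, an orientation-preserving self-diffeomorphism of the triple $(Y,L,\omega)$ carrying $[\Sigma]$ to $-[\Sigma]$ need not exist for an arbitrary admissible triple, which is the generality of the statement, so there is no ``cobordism-map machinery'' to invoke. Your fallback appeal to the structure theorems --- that the eigenvalue set is closed under $\lambda\mapsto-\lambda$ \emph{with equal multiplicities} --- assumes exactly what is to be proved; the known spectral constraints bound which eigenvalues can occur but do not by themselves give the multiplicity symmetry.

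The paper obtains $\tau$ purely formally, with no geometry: $\II(Y,L,\omega)$ carries a relative $\bZ/4$-grading and $\muu(\Sigma)$ is homogeneous of degree $2$, so writing $\II(Y,L,\omega)=I_1\oplus I_2\oplus I_3\oplus I_4$ and taking $\varphi=\id$ on $I_1\oplus I_2$ and $\varphi=-\id$ on $I_3\oplus I_4$ gives an involution with $\varphi\,\muu(\Sigma)\,\varphi^{-1}=-\muu(\Sigma)$, and your first display then finishes the proof. Replacing your geometric construction by this grading involution repairs the argument; as written, the proposal has a genuine missing step.
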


\begin{proof}
	Recall that $\II(Y,L,\omega)$ is a relatively graded $\bZ/4$ linear space over $\bC$, and $\muu(T_c)$ is an operator with degree $2$. Decompose $\II(Y,L,\omega)$ as $I_1\oplus I_2\oplus I_3\oplus I_4$ with respect to the relative $\bZ/4$--gradings, let $\varphi$ be an automorphism of $\II(Y,L,\omega)$ such that $\varphi=\id$ on $I_1\oplus I_2$ and $\varphi=-\id$ on $I_3\oplus I_4$, then $\varphi$ gives an isomorhism from the generalized eigenspace of $\muu(\Sigma)$ with eigenvalue $\lambda$ to the generalized eigenspace of $\muu(\Sigma)$ with eigenvalue $-\lambda$.
\end{proof}

\begin{Lemma}
\label{lem_eigenspace_dimension_one_surface_knot}
	Suppose $\Sigma$ has genus $g\ge 1$, let $c_0\subset \Sigma$ be a non-separating simple closed curve, let $K=\{0\}\times c_0\subset S^1\times \Sigma$. Let $c$ be a simple closed curve on $\Sigma$ that intersects $c_0$ transversely at one point, and let $T_c=S^1\times c$. Then the eigenvalues of $\muu(T_c)$ on $\SiHI(K)$ are $\{-1,1\}$. Moreover, for each $\lambda\in\{-1,1\}$, the generalized eigenspace of $\muu(T_c)$ on $\SiHI(K)$ with eigenvalue $\lambda$ has dimension one.
\end{Lemma}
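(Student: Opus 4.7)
The plan is to first note that by Lemma~\ref{lem_surface_knot_rank_2} we already have $\dim_{\bC}\SiHI(K)=2$, so the content of the lemma is really the spectrum of the operator $\muu(T_c)$ on this two-dimensional space. The torus $T_c=S^1\times c$ has genus $g=1$ and intersects the knot $K=\{0\}\times c_0$ transversely in the single point $(0,c\cap c_0)$, so in the notation introduced before Theorem~\ref{thm_closed_nonvanish_singular} we are in the situation $g=1$, $n=1$, and the ``top'' eigenvalue $2g+n-2$ equals $1$.

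I will then invoke the standard eigenvalue constraint for $\muu$-operators in singular instanton Floer homology: for an embedded surface $\Sigma$ of genus $g$ transverse to the link $L$ at $n$ points, the eigenvalues of $\muu(\Sigma)$ on $\II(Y,L,\omega)_{(2)}$ lie in the arithmetic progression
\[
\{-(2g+n-2),\ -(2g+n-2)+2,\ \dots,\ (2g+n-2)\}
\]
with common difference $2$; this is the standard computation coming from the $\SU(2)$ representation variety model of singular instanton Floer homology, see \cite[Section~2]{XZ:excision} and references therein. Specialising to $g=n=1$ gives the set $\{-1,+1\}$; in particular $0$ cannot occur as an eigenvalue of $\muu(T_c)$ on $\II(S^1\times\Sigma,K,\omega)_{(2)}$, and hence not on $\SiHI(K)$ either.

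Combining this eigenvalue constraint with Lemma~\ref{lem_spectrum_symmetric_at_zero}, which forces the generalized $+\lambda$- and $-\lambda$-eigenspaces of $\muu(T_c)$ on $\SiHI(K)$ to have equal complex dimensions, together with the total dimension being $2$, immediately yields that both eigenvalues $+1$ and $-1$ actually occur and that each generalized eigenspace is one-dimensional, which is exactly the content of the lemma.

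The main obstacle I expect is locating a clean citation for the arithmetic-progression statement in the \emph{singular} setting, rather than just the upper bound $|2g+n-2|$ on eigenvalues. If a direct reference is hard to find, a backup plan is to use Lemma~\ref{lem_surface_reduce_to_AHI} to transport the question to an annular problem: under the resulting isomorphism $\SiHI(K)\cong\AHI(K_{N(c_0)})$ for $K_{N(c_0)}$ the core circle of the annulus, the operator $\muu(T_c)$ corresponds to a natural $\muu$-operator on $\AHI(\text{core})$ whose $\pm 1$ eigenspace decomposition is evident from \cite[Example~4.2]{AHI}, after tracking the cobordism maps used in the excision argument.
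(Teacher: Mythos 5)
Your argument is correct and is essentially the paper's own proof: the paper combines the eigenvalue constraint (cited there as \cite[Proposition 6.1]{XZ:excision}, which is exactly the clean reference you were looking for, giving spectrum contained in $\{-1,1\}$ when $T_c$ is a torus meeting $K$ in one point) with $\dim_{\bC}\SiHI(K)=2$ from Lemma \ref{lem_surface_knot_rank_2} and the symmetry of Lemma \ref{lem_spectrum_symmetric_at_zero}. No further changes are needed; your backup route through Lemma \ref{lem_surface_reduce_to_AHI} is unnecessary.
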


\begin{proof}
By \cite[Proposition 6.1]{XZ:excision}, the spectrum of $\muu(T_c)$ on $\SiHI(K)$ is a subset of $\{-1,1\}$. By Lemma \ref{lem_surface_knot_rank_2}, $\dim \SiHI(K)=2$. Therefore by Lemma \ref{lem_spectrum_symmetric_at_zero}, for each $\lambda\in\{-1,1\}$, the generalized eigenspace  with eigenvalue $\lambda$ has dimension one.
\end{proof}

\begin{Lemma}
\label{lem_range_of_eigenvalues}
Let $c$ be a simple closed curve on $\Sigma$, and let $T_c=S^1\times c$. 
Consider the operator $\muu(T_c)$ on $\SiHI(L)$. Suppose $T_c$ intersects $L$ transversely at $n$ points, then the spectrum of $\muu(T_c)$ on $\SiHI(L)$ is a subset of $\{-n+2i|i\in \bZ, 0\le i \le n\}$.
\end{Lemma}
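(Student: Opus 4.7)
The plan is to extend the argument of \cite[Proposition 6.1]{XZ:excision}, which establishes an analogous eigenvalue bound in the $n=1$ case, to general $n$ via torus excision. The strategy is to reduce the eigenvalue spectrum of $\muu(T_c)$ on $\SiHI(L)$ to that of a $\muu$-operator associated to a $2$-sphere meeting a singular link transversely in $n$ points, where the spectrum is classically known.

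First, isotope $L$ so that in a tubular neighborhood $N(T_c) \cong T_c \times (-\varepsilon, \varepsilon) \subset S^1 \times \Sigma$ the link is a product $L \cap N(T_c) = (L \cap T_c) \times (-\varepsilon, \varepsilon)$, consisting of $n$ vertical arcs through the intersection points. Next, choose a standard model triple $(\hat Y, \hat L, \hat \omega)$ together with an auxiliary torus $\hat T \subset \hat Y$ meeting $\hat L$ transversely in $n$ points whose Floer homology and $\muu(\hat T)$-spectrum are explicitly known; a convenient model is built from copies of $(S^1 \times S^2, S^1 \times \{\pt\})$ with enough meridional intersection points to match $n$. Applying the torus excision theorem along $T_c \sqcup \hat T$ produces an isomorphism that intertwines $\muu(T_c) \otimes \id + \id \otimes \muu(\hat T)$ with a $\muu$-operator on the image surface in the excised manifold. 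By arranging the model appropriately (or iterating the excision), this reduces the question to computing the eigenvalues of $\muu(S^2)$ on an admissible Floer group where $S^2$ meets the singular link transversely at exactly $n$ points, tensored with Floer groups where the relevant operator has trivial or already-known spectrum.

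For the final model step, the eigenvalues of $\muu(S^2)$ on a $2$-sphere meeting a singular link transversely in $n$ points are classically contained in $\{-n+2i : 0 \le i \le n\}$; this follows from the explicit description of the representation variety of the $n$-punctured sphere with meridional holonomies in $\SU(2)$ of trace zero (a quotient of $\{(A_1,\ldots,A_n)\in\SU(2)^n : \prod A_i = \id,\ \mathrm{tr}(A_i)=0\}$ by conjugation) together with the Chern--Weil formula for $\muu$. Transferring via the excision intertwining yields the desired bound for $\muu(T_c)$ on $\SiHI(L)$. The main obstacle is bookkeeping: keeping track of how the excision intertwines $\muu$-operators under the tensor product splitting, and arranging the model so that the image surface in the excised manifold is indeed a sphere (or sum of spheres) with the correct intersection pattern. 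Beyond this careful setup, no new conceptual ingredient is needed past what already appears in the proof of \cite[Proposition 6.1]{XZ:excision}.
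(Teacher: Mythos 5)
There is a genuine gap, and it sits at the heart of your plan: the excision step along $T_c$ itself. The excision theorems available in this setting (Floer/Kronheimer--Mrowka torus excision and its singular extensions in \cite{XZ:excision}) are only isomorphisms after restricting to the \emph{top} generalized eigenspace of $\muu$ of the surface along which you excise; for a torus disjoint from the link this is no restriction (the only eigenvalue on the $\mu(\pt)=2$ part is $0$), which is why those excisions look like isomorphisms of the whole group, but for a torus meeting $L$ in $n$ points the top eigenvalue is $n$ and the excision isomorphism says nothing about the rest of the spectrum. So cutting along $T_c\sqcup \hat T$ cannot ``intertwine $\muu(T_c)\otimes\id+\id\otimes\muu(\hat T)$'' on all of $\SiHI(L)$; at best it compares top eigenspaces, and knowing which eigenspace is ``top'' is essentially the content of the lemma you are trying to prove. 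In addition, an excision theorem along a torus meeting the link in a general number $n$ of points would require a rank-one model piece with that intersection pattern, which is not established in the cited references (the models used in the paper always have the excision tori disjoint from the link, with $\omega$ or the arcs crossing them), and your final step --- reading off the eigenvalues of $\muu(S^2)$ from the traceless representation variety of the punctured sphere --- is not by itself a proof of an eigenvalue bound on Floer homology; that is exactly the kind of statement \cite[Proposition 6.1]{XZ:excision} supplies, and only for odd intersection numbers.

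Note also that \cite[Proposition 6.1]{XZ:excision} is not just the $n=1$ case: it already gives the lemma for every odd $n$, so the only real content is the even case, where parity prevents a direct citation. The paper handles this with a short reduction you do not consider: if $n$ is even and $c$ is null-homologous then $\muu(T_c)=0$; otherwise pick $c_1$ meeting $c$ once, set $K_1=\{0\}\times c_1$, apply the odd case to $L\sqcup K_1$ (now $T_c$ meets it in $n+1$ points), and use Lemma \ref{lem_disjoint_union_of_links} (the split isomorphism intertwines $\muu(T_c)$ with $\muu(T_c)\otimes\id+\id\otimes\muu(T_c)$) together with Lemma \ref{lem_eigenspace_dimension_one_surface_knot} ($\muu(T_c)$ has exactly the eigenvalues $\pm1$ on $\SiHI(K_1)$) to pin the spectrum on $\SiHI(L)$ down to $\{-n+2i\}$. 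Some such parity trick, or a genuinely new excision theorem proved on all eigenspaces, is what your write-up is missing.
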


\begin{proof}
	If $n$ is odd, then the result is a special case of \cite[Proposition 6.1]{XZ:excision}. If $n$ is even and $c$ is null-homologous, then $\muu(T_c)$ is zero and the statement is obvious. If $n$ is even and $c$ has a non-trivial fundamental class, then there exists a simple closed curve $c_1\subset \Sigma$ that intersects $c$ transversely at one point. Let $K_1=\{0\}\times c_1$. By the previous argument, the desired property holds for $L\sqcup K_1$. The result for $L$ then follows from Lemma \ref{lem_disjoint_union_of_links} and Lemma \ref{lem_eigenspace_dimension_one_surface_knot}.
\end{proof}

\begin{Definition}
Let $c$ be a simple closed curve on $\Sigma$, define the \emph{$c$--grading} on $\SiHI(L)$ to be the grading given by the generalized eigenspace decomposition of $\SiHI(L)$ with respect to $\muu(T_c)$.
\end{Definition}

\begin{remark}
By Lemma \ref{lem_range_of_eigenvalues}, if $T_c$ intersects $L$ transversely at $n$ points, then the $c$--grading of $\SiHI(L)$ is supported in $\{-n+2i|i\in \bZ, 0\le i \le n\}$.
\end{remark}

\begin{Proposition}
\label{prop_c_grading_zero_implies_disjoint}
	Suppose $\Sigma$ is a torus, let $c$ be a simple closed curve on $\Sigma$. Let $L$ be a link in $(-1,1)\times \Sigma$. Suppose the $c$--grading of $\SiHI(L)$ is supported at zero, then $L$ can be isotoped so that it is disjoint from $(-1,1)\times c$.
\end{Proposition}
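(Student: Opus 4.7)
Plan. I would argue by contrapositive: assume $L$ cannot be isotoped to be disjoint from $(-1,1)\times c$, and exhibit a nonzero generalized eigenvalue of $\muu(T_c)$ on $\SiHI(L)$. Two preliminary reductions simplify the setup. First, if $c$ is nullhomotopic on $T^2$ then $(-1,1)\times c$ bounds a ball and $L$ can always be pushed off it (also $[T_c]=0$), so we may assume $c$ is essential. Second, any split component of $L$ that is contained in a $3$-ball contributes a tensor factor of $\SiHI(L)$ on which $\muu(T_c)$ acts with spectrum $\subseteq\{0\}$ (by Lemma \ref{lem_disjoint_union_of_links} together with Lemma \ref{lem_range_of_eigenvalues} applied to a disk disjoint from $c$), so removing such components does not change the question, and we may assume $Y-L$ is irreducible, where $Y=S^1\times T^2$.

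Now isotope $L$ to minimize $n:=|L\cap T_c|\ge 1$. Using that $c$ is essential on $T^2$, one can further arrange all $n$ intersection points to carry the same sign, so $|[T_c]\cdot [L]|=n$. This implies that $T_c$ is $L$--norm-minimizing: for any oriented surface $S$ with $[S]=[T_c]$ in $H_2(Y;\bZ)$ we have $|S\cap L|\ge n$ (by algebraic intersection), and since $\pi_2(Y)=0$ no component of $S$ can be a sphere, so $-\chi(S)\ge 0$ and hence $x_L(S)\ge n=x_L(T_c)$.

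Next I would apply Theorem \ref{thm_closed_nonvanish_singular} to produce a nonzero eigenvalue of $\muu(T_c)$, adjusting the intersection parity and size by adding auxiliary knots of the form $K=\{0\}\times c'$, with $c'\subset T^2$ a simple closed curve meeting $c$ transversely once. When $n\ge 3$ is odd, the theorem applies directly and yields a nonzero $\muu(T_c)$--eigenspace with eigenvalue $n$. When $n$ is even, the link $L\sqcup K$ meets $T_c$ in $n+1\ge 3$ points of a single sign, and careful application of lemmas analogous to those in Sections \ref{subsec_excision_irreducibility} and \ref{subsec_excision_Thurston} supplies irreducibility and $L\sqcup K$--norm-minimality, so Theorem \ref{thm_closed_nonvanish_singular} gives eigenvalue $n+1$ on $\SiHI(L\sqcup K)$. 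By Lemma \ref{lem_disjoint_union_of_links} the spectrum of $\muu(T_c)$ on $\SiHI(L\sqcup K)$ is the Minkowski sum of the spectra on $\SiHI(L)$ and $\SiHI(K)$; by Lemma \ref{lem_eigenspace_dimension_one_surface_knot} the latter equals $\{-1,1\}$, so the appearance of $n+1$ forces the value $n\neq 0$ to lie in the spectrum on $\SiHI(L)$. The case $n=1$ is handled analogously by adding two auxiliary knots $K,K'$, giving intersection $3$.

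The hardest step is verifying the hypotheses of Theorem \ref{thm_closed_nonvanish_singular} after introducing the auxiliary knots, since the existing parallel-copy lemmas (e.g.\ Lemma \ref{lem_irreducible_after_adding_parallel_copy} and Lemma \ref{lem_norm_minimizing_after_adding_parallel_copy}) add copies of components of $L$, not genuinely new essential knots on $\{0\}\times T^2$; one must either prove direct analogs for the knot $K$ or reduce to the parallel-copy setup by an isotopy. A related secondary subtlety is the isotopy step ensuring that $|L\cap T_c|$ can be realized with all intersections of the same sign, which should follow from a minimal-position argument for the projection of $L$ onto $T^2$ using asphericity of $T^2$ and essentiality of $c$.
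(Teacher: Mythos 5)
Your contrapositive strategy founders at its central step: the claim that, after minimizing $n=|L\cap T_c|$, all intersection points can be arranged to have the same sign, so that $n=|[T_c]\cdot[L]|$ and $T_c$ itself is $L$--norm-minimizing with $x_L([T_c])=n$. This is false in general, and the failure is exactly the hard case of the proposition. Take for $L$ a winding-number-zero pattern (e.g.\ a Whitehead-type knot) inside a thickened annulus transverse to $c$: such a knot cannot be isotoped off $(-1,1)\times c$, its minimal geometric intersection with $T_c$ is $2$, but $[T_c]\cdot[L]=0$, so no minimal-position argument can make the two intersections coherent and the algebraic-intersection bound gives $x_L([T_c])\ge 0$ only. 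More fundamentally, the eigenvalue that Theorem \ref{thm_closed_nonvanish_singular} produces is $2g+n-2=x_L$ of a \emph{norm-minimizing connected representative} of the class, not the geometric intersection number of $T_c$; a norm-minimizing representative may well be a higher-genus surface disjoint from $L$ (obtained by tubing cancelling intersection points along $L$), in which case the instanton input by itself only tells you that the relevant eigenvalue is governed by $x_L([T_c])$, and your contrapositive hypothesis ``$L$ cannot be isotoped off $(-1,1)\times c$'' has not been converted into a positive lower bound for $x_L([T_c])$. The auxiliary-knot bookkeeping (Minkowski sum of spectra via Lemma \ref{lem_disjoint_union_of_links} and Lemma \ref{lem_eigenspace_dimension_one_surface_knot}, parity adjustment by dual copies of $c'$) is fine and is also what the paper does, but it cannot rescue the unjustified norm computation.

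The paper's proof goes the other way around and supplies precisely the bridge you are missing. It works rel boundary in $[-1,1]\times\Sigma$: take an $L^{(1)}$--norm-minimizing surface $S_0$ in the class of $[-1,1]\times c$ with $\partial S_0=\{-1,1\}\times c$ (Lemma \ref{lem_minimizer_intersect_torus}), correct the ambiguity by $m[\Sigma]$ with a twisting isotopy so that its closure is homologous to $T_c$ and still norm-minimizing (Lemma \ref{lem_norm_minimzing_after_gluing_torus_boundary}), add parallel dual curves in $\{1\}\times\Sigma$ to make the intersection number odd and $\ge 3$, and apply Theorem \ref{thm_closed_nonvanish_singular} to get a nonzero generalized eigenspace of $\muu(T_c)$ with eigenvalue $2g+n_1-2$ on $\SiHI(L^{(1)}\sqcup L_{n_2})$, hence with eigenvalue $2g+n_1-2$ on $\SiHI(L^{(1)})$ after stripping off the duals. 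The hypothesis that the $c$--grading is supported at zero then forces $g=1$, $n_1=0$, i.e.\ the norm-minimizing surface is an annulus disjoint from $L^{(1)}$ with boundary $\{-1,1\}\times c$; a final vertical-versus-horizontal argument in the Seifert fibration of $[-1,1]\times\Sigma$ isotopes this annulus to $[-1,1]\times c$, which is what actually produces the isotopy of $L$ inside the thickened torus (note the statement concerns disjointness from the vertical annulus $(-1,1)\times c$ in $(-1,1)\times T^2$, not minimal position with respect to the closed torus $T_c$ in $S^1\times T^2$, so this last geometric step cannot be skipped). Your acknowledged gap about irreducibility and norm-minimality after adding genuinely new dual knots is real but repairable (the paper places the duals in $\{1\}\times\Sigma$ and verifies both points directly); the coherent-sign and norm-minimality claim for $T_c$, however, is not repairable as stated, and without it the proposal does not prove the proposition.
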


\begin{proof}
	The statement is obvious if $c$ is null-homotopic. In the following, assume $c$ is an essential curve on $\Sigma$. The link $L$ can be decomposed as $L=L^{(1)}\sqcup L^{(2)}$, where $L^{(2)}$ is contained in a 3-ball in $(-1,1)\times \Sigma$, and $\big((-1,1)\times \Sigma\big)-L^{(1)}$ is irreducible. By Lemma \ref{lem_irreducible_after_gluing_along_incompressible_boundary}, $(S^1\times\Sigma )-L^{(1)}$ is also irreducible. 
	
	By Lemma \ref{lem_rank_at_least_2^n_for_contractible_links}, $\SiHI(L^{(2)})\neq 0$. By Lemma \ref{lem_range_of_eigenvalues}, the $c$--grading of $\SiHI(L^{(2)})$ is supported at degree zero, therefore by Lemma \ref{lem_disjoint_union_of_links} and the assumptions, $\SiHI(L^{(1)})$ is supported at $c$--degree zero. We only need to show that $L^{(1)}$ can be isotoped so that it is disjoint from $(-1,1)\times c$.
	
	Let $c_1$ be a simple closed curve on $\Sigma$ that intersects $c$ transversely at one point. For $n\in\bZ^+$, let $L_n\subset S^1\times \Sigma$ be the link contained in $\{1\}\times\Sigma$ given by $n$ parallel copies of $c_1$. Since every component of $L_n$ is non-contractible and $S^1\times \Sigma$ is irreducible, we have $(S^1\times\Sigma)-(L^{(1)}\sqcup L_n)$ is irreducible for all $n$.
	
	Now let $(S,\partial S)\subset ([-1,1]\times \Sigma,\{-1,1\}\times \Sigma)$ be a properly embedded surface such that $S$ is homologous to $[-1,1]\times c$ in $H_2([-1,1]\times \Sigma,\{-1,1\}\times \Sigma)$, $S$ is transverse to $L^{(1)}$, and $S$ minimizes the generalized Thurston norm with respect to $L^{(1)}$. By Lemma \ref{lem_minimizer_intersect_torus}, one can choose $S$ such that $\partial S=\{-1,1\}\times c$. Notice that the surface $S$ may be  
	non-connected. Since the fundamental classes of $\{-1\}\times c$ and $\{1\}\times c$ are both non-trivial in $H_1([-1,1]\times \Sigma)$, the two boundary components of $S$ must belong to the same component. Let $S_0$ be the connected component of $S$ such that $\partial S_0=\partial S$, then $S_0$ is also $L^{(1)}$-norm-minimizing. Since the map 
	$$H_2([-1,1]\times\Sigma)\to H_2([-1,1]\times \Sigma,\{-1,1\}\times \Sigma)$$
	is zero, every closed component of $S$ is null-homologous in $H_2([-1,1]\times \Sigma,\{-1,1\}\times \Sigma)$, therefore $S_0$ is homologous to $S$ in $H_2([-1,1]\times \Sigma,\{-1,1\}\times \Sigma)$.

	Let $\overline{S_0}$ be the image of $S_0$ in $S^1\times \Sigma$, let $T_c=S^1\times c$. Notice that the kernel of the map 
	$$
	H_2(S^1\times\Sigma) \to H_2(S^1\times\Sigma,\{1\}\times\Sigma)\cong H_2([-1,1]\times\Sigma,\{-1,1\}\times\Sigma)
	$$
	is generated by the fundamental class of $\{1\}\times\Sigma$.
	Therefore, there is an integer $m$ such that $[\overline{S_0}]=[T_c]+m\cdot [\{1\}\times \Sigma]$. 
	Recall that $\Sigma$ is a torus and $c_1$ is a simple closed curve on $\Sigma$ that intersects $c$ transversely at one point. By twisting $[-1,1]\times \Sigma$ near $\{1\}\times\Sigma$ in the direction of $c_1$, one can find a surface $S_0'$ such that $\partial S_0'=\partial S_0$, and $S_0'$ is isotopic to $S_0$ in $[-1,1]\times\Sigma$ (\emph{without} fixing the boundary), $x_{L^{(1)}}(S_0')=x_{L^{(1)}}(S_0)$, and the image of $S_0'$ in $S^1\times\Sigma$ is homologous to $T_c$. Let $\overline{S_0'}$ be the image of $S_0'$ in $S^1\times \Sigma$. By Lemma \ref{lem_norm_minimzing_after_gluing_torus_boundary}, $\overline{S_0'}$ is $L^{(1)}$-norm-minimizing in $S^1\times\Sigma$. 
	
	Suppose $\overline{S_0'}$ has genus $g$, then $g\ge 1$.
	Suppose $S_0'$ and $L^{(1)}$ intersect transversely at $n_1$ points. Let $n_2\ge 0$ be an integer such that $n_1+n_2$ is odd and at least 3. Since $\overline{S_0'}$ is $L^{(1)}$-norm-minimizing, it is also $(L^{(1)}\sqcup L_{n_2})$-norm-minimizing. Let $p\in \Sigma$ be a point such that $S^1\times p$ is disjoint from $L^{(1)}\sqcup L_{n_2}$. By Theorem \ref{thm_closed_nonvanish_singular}, 
	$$
	\II(S^1\times\Sigma,L^{(1)}\sqcup L_{n_2},S^1\times\{p\}|\overline{S_0'})\neq 0.
	$$
	By definition, this means the generalized eigenspace of $\muu(\overline{S_0'})$ on $\SiHI(K\sqcup L_{n_2})$ with eigenvalue $2g+n_1+n_2-2$ is non-trivial. 
	Since $\overline{S_0'}$ is homologous to $T_c$, we have $\muu(T_c)=\muu(\overline{S_0'})$, therefore by Lemma \ref{lem_disjoint_union_of_links} and Lemma \ref{lem_eigenspace_dimension_one_surface_knot}, we conclude that the generalized eigenspace of $\muu(T_c)$ on $\SiHI(L^{(1)})$ with eigenvalue $2g+n_1-2$ is non-trivial. Since the $c$--grading of $\SiHI(L^{(1)})$ is supported at degree zero, this implies $2g+n_1-2=0$, and hence $g=1$, $n_1=0$. In conclusion, $S_0'$ is an annulus such that $\partial S_0=\{-1,1\}\times c$, and $S_0'\cap L^{(1)}=\emptyset$.

	Notice that $[-1,1]\times \Sigma$ can be viewed as an $S^1$-fiber bundle with fibers parallel to $c$. By a standard property of Seifert-fibered spaces (see, for example, \cite[Proposition 1.11]{hatcher2000notes}), $S_0'$ can be isotoped to a surface that is either tangent to the fibers or transverse to the fibers. Since $\partial S_0'=\{-1,1\}\times c$, it cannot be isotoped to a surface that is transverse to the fibers, therefore  $S_0$ is isotopic to $[-1,1]\times c$, and the result is proved.
\end{proof}

\begin{Proposition}\label{prop_inst_rank_2_disjoint_from_c}
Suppose $\Sigma$ is a torus, $L \subset (-1,1)\times \Sigma$ is a link, and
$$
\dim_\bC\SiHI(L)\le 2.
$$
Then there is an essential simple closed curve $c$ on $\Sigma$, such that $L$ can be isotoped so that it is disjoint from $(-1,1)\times c$. 
\end{Proposition}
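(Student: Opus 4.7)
The plan is to exhibit an essential simple closed curve $c\subset T^2$ whose $c$-grading on $\SiHI(L)$ is concentrated at zero, and then invoke Proposition~\ref{prop_c_grading_zero_implies_disjoint}. By the spectrum symmetry in Lemma~\ref{lem_spectrum_symmetric_at_zero}, the generalized eigenspaces of $\muu(T_c)$ at eigenvalues $\pm\lambda$ always have equal dimension, so under $\dim_{\bC}\SiHI(L)\le 2$ the spectrum of $\muu(T_c)$ on each essential $c$ is either $\{0\}$ (in which case Proposition~\ref{prop_c_grading_zero_implies_disjoint} finishes the proof) or $\{-\lambda_c,\lambda_c\}$ with $\lambda_c\neq 0$ and each generalized eigenspace one-dimensional (so in particular $\dim_{\bC}\SiHI(L)=2$). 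If $\dim_{\bC}\SiHI(L)\le 1$, only the first alternative can arise and we are done immediately.

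Assume instead that $\dim_{\bC}\SiHI(L)=2$ and that the second alternative occurs for every essential $c$; I plan to derive a contradiction. Fix a symplectic basis $c_1,c_2\subset T^2$ with $c_1\cdot c_2=1$ and set $A=\muu(T_{c_1})$, $B=\muu(T_{c_2})$. Both matrices are traceless $2\times 2$ complex matrices, so by Cayley--Hamilton $A^2=a^2 I$ and $B^2=b^2 I$ for nonzero integers $a,b$ (with $\pm a$ and $\pm b$ the eigenvalues of $A$ and $B$; integrality comes from Lemma~\ref{lem_range_of_eigenvalues}). Using that $[T_{pc_1+qc_2}]=p[T_{c_1}]+q[T_{c_2}]$ in $H_2(S^1\times T^2;\bQ)$ and the linearity of $\muu$ on second homology, for every primitive $(p,q)\in\bZ^2$ we get
\[
(pA+qB)^2 = (a^2p^2+b^2q^2)I + pq(AB+BA) = \lambda_{pc_1+qc_2}^2\,I,
\]
which both forces $AB+BA=\mu I$ for a single integer $\mu$ and yields $\lambda_{pc_1+qc_2}^2=Q(p,q):=a^2p^2+\mu pq+b^2q^2$ for every primitive $(p,q)$. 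By Lemma~\ref{lem_range_of_eigenvalues}, $Q$ takes positive perfect-square integer values on every primitive pair.

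To finish, I would specialize to pairs of the form $(p,1)$, which are primitive for every integer $p$. The integer quadratic polynomial $Q(p,1)=a^2p^2+\mu p+b^2$ would then be a perfect square at every $p\in\bZ$. An elementary argument comparing it with the consecutive squares $(ap+k)^2$ and $(ap+k+1)^2$ (taking $k$ the nearest integer to $\mu/(2a)$) forces $Q(p,1)=(ap+m_0)^2$ identically for some $m_0\in\bZ$, which gives $\mu=2am_0$ and $m_0^2=b^2$, hence $\mu=\pm 2ab$. But then $Q(p,q)=(ap\pm bq)^2$ vanishes on the primitive pair $(b/d,\mp a/d)$ with $d=\gcd(|a|,|b|)$, contradicting the positivity of $Q$ on primitive pairs. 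I expect the main obstacle to be this final elementary number-theoretic step identifying $Q(p,1)$ as a perfect square polynomial, but it is a classical fact about integer-valued quadratic polynomials that are squares on $\bZ$.
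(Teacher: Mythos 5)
Your proof is correct, and it has the same endgame as the paper's --- produce an essential simple closed curve $c$ for which $\muu(S^1\times c)$ has generalized eigenvalue $0$ on all of $\SiHI(L)$, then quote Proposition~\ref{prop_c_grading_zero_implies_disjoint} --- but the mechanism by which you produce $c$ is genuinely different. The paper notes that $\muu(S^1\times c_1)$ and $\muu(S^1\times c_2)$ commute, so on the $2$--dimensional space $\SiHI(L)$ they admit simultaneous generalized eigenvalues, which by Lemma~\ref{lem_spectrum_symmetric_at_zero} occur as a pair $(\lambda_1,\lambda_2)$, $(-\lambda_1,-\lambda_2)$; since $\lambda_1,\lambda_2$ are integers by Lemma~\ref{lem_range_of_eigenvalues}, it takes a coprime $(p,q)$ with $p\lambda_1+q\lambda_2=0$ and lets $c$ represent $p[c_1]+q[c_2]$, so that $\muu(S^1\times c)=p\,\muu(S^1\times c_1)+q\,\muu(S^1\times c_2)$ is nilpotent and the $c$--grading is concentrated at $0$. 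You avoid simultaneous eigenvalues entirely: assuming every essential curve has nonzero eigenvalues, Cayley--Hamilton converts the hypothesis into $(pA+qB)^2=Q(p,q)\,I$ with $Q(p,q)=a^2p^2+\mu pq+b^2q^2$ an integral binary quadratic form that must take positive perfect-square values on every primitive vector, and the elementary perfect-square argument (the integer sequence $s_p-|a|p$ is eventually constant) forces $\mu=\pm 2ab$, so $Q=(ap\pm bq)^2$ has a primitive zero, a contradiction; that zero is of course exactly the class the paper writes down directly as $(\lambda_2,-\lambda_1)$ up to a common factor. Your auxiliary inputs --- integrality of the eigenvalues, the fact that $\muu$ depends only on the homology class of the surface, and its additivity in that class --- are the same ones the paper uses, and your handling of the $\dim\le 1$ case and of the dichotomy forced by Lemma~\ref{lem_spectrum_symmetric_at_zero} is correct, so the argument is complete; the paper's route is simply shorter because commuting operators on a $2$--dimensional space trivially have a common generalized eigenvector, whereas yours trades that for the anticommutator identity plus a small number-theoretic lemma.
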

\begin{proof}
Let $c_1$, $c_2$ be a pair of essential simple closed curves on $\Sigma$ such that $c_1$ and $c_2$ intersect transversely at one point. If $\SiHI(L)$ is supported at degree zero with respect to either the $c_1$--grading or the $c_2$--grading, then the result follows from Proposition \ref{prop_c_grading_zero_implies_disjoint}. Otherwise, by Lemma \ref{lem_spectrum_symmetric_at_zero}, we must have $\dim_\bC\SiHI(L)= 2$. Moreover, since $\muu(S^1\times c_1)$ and $\muu(S^1\times c_2)$ are commutative, the simultaneous eigenvalues of $\big(\muu(S^1\times c_1),\muu(S^1\times c_2)\big)$  on $\SiHI(K)$ can be written as $(\lambda_1,\lambda_2)$ and $(-\lambda_1,-\lambda_2)$. By Lemma \ref{lem_range_of_eigenvalues}, there is a coprime pair of integers $(p,q)$ such that $p\lambda_1+q\lambda_2=0$. Let $c$ be a simple closed curve on $\Sigma$ with the homology class $p[c_1]+q[c_2]$, then the $\muu(S^1\times c)=p\muu(S^1\times c_1) + q\muu(S^1\times c_1)$ is identically zero on $\SiHI(L)$, and the result follows from Proposition \ref{prop_c_grading_zero_implies_disjoint}.
\end{proof}

\section{Canonical isomorphisms of instanton Floer homology}
\label{sec_canonical_isomorphisms}

This section constructs several maps that realize the isomorphisms given by Lemma \ref{lem_disjoint_union_of_links} and Lemma \ref{lem_torus_reduce_to_AHI}. These maps will play an important role in the computation of the spectral sequence in Section \ref{sec_spectral_sequence}. Recall that all curves, surfaces, and 3-manifolds are assumed to be oriented unless otherwise specified, and all maps are assumed to be smooth. Throughout this section, $\Sigma$ will denote a torus. We start with the following definition.

\begin{Definition}	\label{def_generator_u_0}
Let $p_1,p_2$ be two distinct points on $S^2$, let $(\hat Y,\hat L)$ be given by $(S^1\times S^2, S^1\times \{p_1,p_2\})$, and let $\hat \omega$ be an arc connecting the two components of $\hat L$. The critical set of the unperturbed Chern-Simons functional of $(\hat Y,\hat L,\hat \omega)$ consists of one regular point. Fixing a choice of orientation, let $\bfu_0$ be the generator of $\II(\hat Y,\hat L,\hat \omega)\cong \bC$ represented by the critical point.
\end{Definition}

\begin{remark}
\label{rmk_AHI_emptyset}
	The choice of $\bfu_0$ is unique up to a sign.
	By definition $\II(\hat Y,\hat L,\hat \omega)=\AHI(\emptyset)$, and the $\bfu_0$ defined above agrees with the choice of the generator of $\AHI(\emptyset)$ in \cite{AHI}, which was also denoted by $\bfu_0$.
\end{remark}

	Recall that $\Sigma$ is an (oriented) torus.
	Let $c\subset \Sigma$ be an (oriented) essential simple closed curve, let $c'\subset \Sigma$ be an (oriented) essential simple closed curve that intersects $c$ transversely at one point so that the algebraic intersection number $c\cdot c'=+1$. Let $N(c)\subset \Sigma$ be a tubular neighborhood of $c$. After an isotopy of $\Sigma$, we may assume that $c$, $\partial N(c)$, and $c'$ are all linearly embedded. 
	Let $L\subset (-1,1)\times \Sigma$ be a link embedded in $\{0\}\times N(c)$. Let $p\in \Sigma-N(c)$, then $\SiHI(L)=\II(S^1\times \Sigma, L,S^1\times \{p\}|\Sigma)$. 
		
	Let $T\subset S^1\times \Sigma$ be a linearly embedded torus such that its fundamental class is $[S^1\times c]+[\Sigma]$ and that it is disjoint from $\{0\}\times N(c)$. 	 Let $\pi:S^1\times\Sigma\to \Sigma$ be the projection map. 
	
	Let $L_{N(c)}$ be the link $L$ viewed as an annular link in $(-1,1)\times N(c)\cong (-1,1)\times A$, where the diffeomorphism from $N(c)$ to $A$ takes $N(c)\cap c'$ to $[-1,1]\times\{\pt\}$.

	Let $\hat Y,\hat L,\hat \omega,p_1,p_2$ be as in Definition \ref{def_generator_u_0}. Let $\hat c\subset S^2$ be an (oriented) circle separating $p_1$ and $p_2$ such that $S^1\times \hat c$ intersects $\hat \omega$ transversely at one point. Let $\hat T=S^1\times \hat c$.

	 Take the excision on $(\hat Y,\hat L,\hat \omega)\sqcup (S^1\times \Sigma, L,S^1\times\{\pt\})$ along $\hat T\sqcup T$ using a diffeomorphism from $T$ to $\hat T$, which maps $T\cap (\{0\}\times\Sigma)$ to $S^1\times \{\pt\}\subset \hat T=S^1\times \hat c$,  maps $T\cap \pi^{-1}(c')$ to $\{\pt\}\times \hat c\subset \hat T=S^1\times \hat c$, and preserves the orientations of $c'$ and $\hat c$. 
	 	 Define $\Phi_{c,c'}$ to be the composition map
	\begin{align}
		\II(S^1\times \Sigma, L,S^1\times\{\pt\}|\Sigma)
		& \stackrel{\otimes \bfu_0}{\longrightarrow} 
		\II(S^1\times \Sigma, L, S^1\times\{\pt\}|\Sigma)\otimes \II(\hat Y,\hat L,\hat \omega) 
		\nonumber
		\\
		& \longrightarrow \AHI(L_{N(c)}) \label{eqn_Phi_c,c'_essential},	
	\end{align}
	where the second map is induced by the excision cobordism above and the image of $\{0\}\times N(c)$ is identified with the annulus $\{0\}\times A\subset(-1,1)\times A$.

	Since the cobordism maps are defined up to a sign, the map $\Phi_{c,c'}$ is well-defined up to a multiplication by $\pm 1$. 
	By the torus excision theorem, $\Phi_{c,c'}$ is an isomorphism. It is straightforward to verify that $\Phi_{c,c'}$ is determined by the isotopy classes of $c, c'$, and the embedding of $L$ in $\{0\}\times N(c)$. 
	
\begin{Definition}
\label{def_tau_orientation}
	Suppose $\Sigma$ is a torus, recall that $\mathcal{C}$ denotes the set of isotopy classes of un-oriented essential simple closed curves on $\Sigma$. Define $\tau$ and $\mathfrak{o}$ as follows.
	\begin{enumerate}
		\item 	Let $\tau:\mathcal{C}\to \mathcal{C}$ be a fixed map, such that for each $[\gamma]\in \mathcal{C}$, the algebraic intersection number of $[\gamma]$ and $\tau([\gamma])$ is $\pm 1$.
		\item For each $[\gamma]\in \mathcal{C}$, let $\mathfrak{o}([\gamma])$ be a fixed choice of orientation of $[\gamma]$. 
	\end{enumerate} 
\end{Definition}

Definition \ref{def_tau_orientation} allows us to define the following two canonical maps. Recall that we use $\mathcal{U}_1\subset (-1,1)\times A$ to denote the trivial annular knot and use $\mathcal{K}_1\subset (-1,1)\times A$ to denote the annular knot given by the closure of a 1-braid. In the following, we also require that $\mathcal{U}_1$ and $\mathcal{K}_1$ are both included in $\{0\}\times A$. 
	
\begin{Definition}
\label{def_Phi}
Let $c_0\subset \Sigma $ be a simple closed curve, and let $K=\{0\}\times c_0$. 
\begin{enumerate}
	\item If $c_0$ is an essential curve, define 
$$\Phi:\SiHI(K)\to \AHI(\mathcal{K}_1)$$ 
to be the map $\Phi_{c,c'}$ given by \eqref{eqn_Phi_c,c'_essential} such that $c$ is parallel to $c_0$, and $c'$ is in the isotopy class of $\tau([c])$.

	\item If $c_0$ is a trivial curve, define 
 $$\Phi:\SiHI(K)\to \AHI(\mathcal{U}_1)$$ 
  to be the map $\Phi_{c,c'}$ given by \eqref{eqn_Phi_c,c'_essential} such that $(c, c')$ is an arbitrary pair of curves that intersect positively at one point and $c_0\subset N(c)$.
\end{enumerate}
\end{Definition} 

\begin{remark}
The map $\Phi$ is well-defined up to a multiplication by $\pm 1$. Lemma \ref{lem_naturality_v_+-} below will show that the map $\Phi$ in Part (2) of the definition does not depend on the choice of $c$ and $c'$.
\end{remark}

Recall that by Definition \ref{def_generator_u_0} and Remark \ref{rmk_AHI_emptyset}, we have chosen a fixed generator  $\bfu_0$ of $\AHI(\emptyset)\cong \bC$. 

Recall that $A$ denotes an annulus. Let $\mathcal{U}_1$ be an (oriented) unknot in $(-1,1)\times A$.
Let $E^+$  be an (oriented) disk  in 
$[-1,1]\times S^1\times D^2$ that gives a cobordism from the empty link to $\mathcal{U}_1$. Let $E^-$ be obtained from $E^+$ by taking connected sum with a torus that is contained in a small solid ball. Let $\bfv_+,\bfv_-\in \AHI(\mathcal{U}_1)$ be defined by
$$
\bfv_+=\AHI(E^+)(\bfu_0),~ \bfv_-=\frac12 \AHI(E^-)(\bfu_0),
$$
where the signs of $\AHI(E^\pm)$ are given as in Remark \ref{rmk_sign_link_cobordism}. 
By \cite[Proposition 5.9]{AHI}, we have 
\begin{equation}
	\label{eqn_basis_AHI_U_1}
	\AHI(\mathcal{U}_1)=\bC\bfv_+\oplus \bC\bfv_-.
\end{equation}

Since $\Sigma$ is a torus, we have
\begin{equation*}
\bC\cong \SiHI(\emptyset)=\II(S^1\times \Sigma,\emptyset, S^1\times \{p\}|\Sigma)   =
\II(S^1\times \Sigma,\emptyset, S^1\times \{p\})_{(2)}.
\end{equation*}
The representation variety of the admissible triple
$
(S^1\times \Sigma,\emptyset, S^1\times \{p\})
$
consists of two regular points differ by degree $4$. Hence there is no differential in the Floer chain complex and
$
\II(S^1\times \Sigma,\emptyset, S^1\times \{p\})
$
is freely generated  by the two points.
Denote the critical points by $\rho$, $\rho'$. By abuse of notation, define 
$$
\bfu_0=\frac12(\rho+\frac12\mu(\pt)(\rho))\in \SiHI(\emptyset)
$$ 
as a generator of   
$\SiHI(\emptyset)$.

\begin{Lemma}\label{lem_naturality_v0_u0}
Let 
$$
\Phi_{c,c'}: \SiHI(\emptyset)\to \AHI(\emptyset)
$$ 
be the map given by \eqref{eqn_Phi_c,c'_essential}.
We have
$$
\Phi_{c,c'}(\bfu_0)=\pm \bfu_0
$$
for every choice of $c,c'$.
\end{Lemma}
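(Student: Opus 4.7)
Since the torus excision theorem makes $\Phi_{c,c'}$ an isomorphism, and both $\SiHI(\emptyset)$ and $\AHI(\emptyset)$ are one-dimensional over $\bC$, we have $\Phi_{c,c'}(\bfu_0) = \lambda\,\bfu_0$ for some $\lambda \in \bC^\times$. My plan is to pin down $\lambda = \pm 1$ by computing the chain-level cobordism map between the Floer complexes directly, exploiting the fact that the generators $\bfu_0$ on both sides are defined via the critical sets of the unperturbed Chern-Simons functional.

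On the $\AHI$ side, the critical set of the unperturbed Chern-Simons functional on $(\hat Y,\hat L,\hat\omega)$ is a single regular point, which is $\bfu_0$ by definition. On the $\SiHI$ side, the critical set on $(S^1\times\Sigma,\emptyset,S^1\times\{p\})$ consists of the two regular points $\rho,\rho'$, and $\bfu_0 = \tfrac{1}{2}(\rho+\tfrac{1}{2}\mu(\pt)\rho)$ is the explicit projection of $\rho$ onto the generalized $2$-eigenspace of $\mu(\pt)$. I would identify the excision $4$-manifold $W$ (together with its embedded singular surface carrying the $\omega$-data) explicitly, classify the flat $\SU(2)$ connections on $W$ restricting to the given critical points on the two ends, and thus write the chain-level cobordism map as an explicit matrix in the basis $\{\rho\otimes\bfu_0,\rho'\otimes\bfu_0\}$. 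Because the operator $\mu(\pt)$ is natural with respect to cobordism maps (up to sliding the basepoint), the resulting map sends the generalized $2$-eigenspace to the generalized $2$-eigenspace, and combining this with the projection formula for $\bfu_0$ should yield $\lambda = \pm 1$.

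The main obstacle will be the sign and normalization bookkeeping: verifying that the factors of $\tfrac{1}{2}$ in $\bfu_0 = \tfrac{1}{2}(\rho+\tfrac{1}{2}\mu(\pt)\rho)$ combine with the gauge-theoretic count of flat connections on $W$ to yield exactly $\pm 1$ requires a careful comparison with the conventions used to normalize $\bfu_0 \in \AHI(\emptyset)$. A cleaner route that I would attempt first is to find a further excision or surgery that reduces the computation to a product cobordism: if the excision cobordism can be decomposed so that, after stripping off pieces already known to act by $\pm \id$ on the relevant generators, the remaining piece is essentially a product, then $\lambda = \pm 1$ follows without any delicate chain-level computation. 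Failing that, the direct gauge-theoretic calculation outlined above should still settle the sign.
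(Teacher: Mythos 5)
Your first outlined route is essentially the paper's own proof: the paper identifies the excision cobordism explicitly and shows by a direct calculation that its moduli space of flat connections consists of two points restricting to $\rho$ and $\rho'$ on the $S^1\times\Sigma$ end, so the chain-level map sends $[\rho]\otimes\bfu_0$ and $[\rho']\otimes\bfu_0$ to $\pm\bfu_0$, and combining this with $\bfu_0=\tfrac12(\rho+\tfrac12\mu(\pt)\rho)$ gives $\Phi_{c,c'}(\bfu_0)=\pm\bfu_0$. So your plan (direct classification of flat connections on the excision cobordism plus the projection formula for $\bfu_0$, with the isomorphism property ruling out $\lambda=0$) is correct and takes essentially the same approach.
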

\begin{proof}
The map $\Phi_{c,c'}$ is induced by the excision cobordism from
$$
(S^1\times T^2,\emptyset, S^1\times \{\pt\})\sqcup (S^1\times S^2,S^1\times \{q_1,q_2\},u)
$$
to
$$
(S^1\times S^2,S^1\times \{q_1,q_2\},u),
$$
where $u$ is an arc on $\{\pt\}\times S^2$ connecting $S^1\times \{q_1\}$ and $S^1\times \{q_2\}$.
This cobordism is obtained by attaching
$$
T^2\times [-1,1]^2
$$
to
$$
[-1,1]\times ((S^1\times T^2,\emptyset,\{q\}\times S^1)\sqcup (S^1\times S^2,S^1\times \{q_1,q_2\},u)),
$$ 
and identifying
$$
T^2\times  [-1,1]\times \{+1\}
$$
with  
$$N(\{1\}\times T)\subset \{1\}\times S^1\times \Sigma,$$
 and identifying
$$
T^2\times  [-1,1]\times \{-1\}
$$
with
$$
N(\{1\}\times\hat{T})\subset \{1\}\times S^1\times S^2.
$$
A direct calculation shows that the moduli space of flat connections on this 
cobordism consists of two points whose restriction to $S^1\times T^2$
are $\rho$ and $\rho'$ respectively. Therefore the cobordism map takes the elements
$$[\rho]\otimes \bfu_0, [\rho']\otimes \bfu_0\in (S^1\times T^2,\emptyset, S^1\times \{\pt\})\sqcup (S^1\times S^2,S^1\times \{q_1,q_2\},u)$$ to 
$$\pm \bfu_0\in (S^1\times S^2,S^1\times \{q_1,q_2\},u).
$$
Hence we have $\Phi_{c,c'}(\bfu_0)=\pm \bfu_0$.
\end{proof}

Let $U_1$ be an (oriented) unknot in $(-1,1)\times \Sigma$. By abuse of notation, let $E^+$ be an (oriented) disk in $[-1,1]\times (-1,1)\times \Sigma$ that gives a cobordism from the empty link to $U_1$, and let $E^-$ be obtained from $E^+$ by taking connected sum with a torus that is contained in a small solid ball. Define $\bfv_+,\bfv_-\in \SiHI(U_1)$ by 
$$
\bfv_+=\SiHI(E^+)(\bfu_0),~ \bfv_-=\frac12 \SiHI(E^-)(\bfu_0),
$$
where the signs of $\bfv_\pm$ are fixed as in Remark \ref{rmk_sign_link_cobordism}. 
Lemma \ref{lem_naturality_v0_u0} and the functoriality of instanton Floer homology yield 
the following result.
\begin{Lemma}\label{lem_naturality_v_+-}
Let $c_1$ be an essential simple closed curve on $\Sigma$, let $N(c)\subset \Sigma$ be a tubular neighborhood of $c$, let $U_1\subset \{0\}\times N(c)$ be an unknot. Let $c'$ be a simple closed curve on $\Sigma$ that intersects $c$ positively at one point.
Then there exists 
$h\in \{1,-1\}$, such that
$$
 \Phi_{c,c'}(\bfv_\pm)=h \bfv_\pm,
$$
where $\Phi_{c,c'}:\SiHI(U_1)\to \AHI(\mathcal{U}_1)$ is the map given by \eqref{eqn_Phi_c,c'_essential}. \qed
\end{Lemma}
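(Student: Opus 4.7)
The argument is a naturality/functoriality argument combined with Lemma \ref{lem_naturality_v0_u0}. The excision cobordism that defines $\Phi_{c,c'}$ is supported in a tubular neighborhood of $T\sqcup \hat T$. Since the unknot $U_1$ lies in $\{0\}\times N(c)$ and is therefore disjoint from $T$, we may arrange the disk cobordism $E^+$ (as well as the small ball in which the extra torus of $E^-$ is attached) to lie entirely outside the excision region, and similarly for the disk cobordism in $[-1,1]\times(-1,1)\times A$ defining the annular $\bfv_\pm$. Stacking the excision cobordism before or after $E^\pm$ then yields the same four-dimensional cobordism of pairs, so by the functoriality of singular instanton Floer homology we obtain a commutative square, up to a single sign $\epsilon\in\{\pm 1\}$ coming from the homology-orientation choices of Remark \ref{rmk_sign_link_cobordism},
\begin{equation*}
\Phi_{c,c'}\circ \SiHI(E^{\pm})\;=\;\epsilon\,\AHI(E^{\pm})\circ \Phi_{c,c'}.
\end{equation*}

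Applying this to $\bfu_0\in \SiHI(\emptyset)$ and using Lemma \ref{lem_naturality_v0_u0} to write $\Phi_{c,c'}(\bfu_0)=\delta\,\bfu_0$ for some $\delta\in\{\pm1\}$, the definitions of $\bfv_{\pm}$ give
\begin{equation*}
\Phi_{c,c'}(\bfv_+)=\Phi_{c,c'}\bigl(\SiHI(E^+)\bfu_0\bigr)=\epsilon\delta\,\AHI(E^+)\bfu_0=\epsilon\delta\,\bfv_+,
\end{equation*}
and analogously $\Phi_{c,c'}(\bfv_-)=\epsilon\delta\,\bfv_-$ (the factor $\tfrac12$ cancels on both sides). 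Setting $h:=\epsilon\delta\in\{\pm1\}$ yields the claim.

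The main subtlety, and the step requiring a little care, is that the \emph{same} sign $\epsilon$ occurs in the commutative square for both $E^+$ and $E^-$. This is the key ingredient that forces a single $h$ to work for both $\bfv_+$ and $\bfv_-$. It holds because $E^-$ is obtained from $E^+$ by a connect sum with a torus inside a ball that can be chosen disjoint from $T\sqcup \hat T$; consequently, the composite cobordisms of pairs $(\mathrm{excision})\#E^-$ and $(\mathrm{excision})\#E^+$ differ by a purely local topological modification, and one can choose homology orientations coherently throughout so that the functoriality sign $\epsilon$ is independent of the $\pm$ choice. Once this bookkeeping of homology orientations is in place (following \cite[Section 4.4]{KM:Kh-unknot}), the lemma follows immediately.
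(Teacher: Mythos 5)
Your proposal is correct and is essentially the paper's own argument: the paper proves this lemma by the single remark that it follows from Lemma \ref{lem_naturality_v0_u0} together with the functoriality of instanton Floer homology, which is exactly the commutative square $\Phi_{c,c'}\circ \SiHI(E^{\pm})=\pm\,\AHI(E^{\pm})\circ \Phi_{c,c'}$ you write down. Your extra paragraph spelling out why the functoriality sign can be taken independent of the $\pm$ choice (so that a single $h$ works for both $\bfv_+$ and $\bfv_-$) is a reasonable elaboration of a point the paper leaves implicit, and is consistent with the sign conventions of Remark \ref{rmk_sign_link_cobordism}.
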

As a consequence, we have
\begin{equation}
\label{eqn_basis_SiHI_U_1}
	\SiHI(U_1)=\bC\bfv_+\oplus \bC\bfv_-.
\end{equation}

Now let $L\subset (-1,1)\times \Sigma$ be an unlink with $n$ components $K_1,\cdots,K_n$. By Lemma \ref{lem_disjoint_union_of_links} and Lemma \ref{lem_surface_knot_rank_2}, we have $\SiHI(L)\cong \bC^{2^n}$. We construct a canonical basis for $\SiHI(L)$ as follows. 
Let $\{v_1^+,\cdots,v_n^+,v_1^-,\cdots,v_n^-\}$ be a set of $2n$ elements. For each $i=1,\cdots,n$, let $V_i$ be the 2-dimensional $\bC$--linear space generated by $v_i^+$ and $v_i^-$. Define the map
\begin{equation*}
	\Theta_L: \otimes_{i=1}^n V_i \to\SiHI(L).
\end{equation*}
by 
\begin{equation}
\label{eqn_Theta_L}
\Theta_L(v_1^\pm \otimes \cdots \otimes v_n^\pm)=\SiHI(E_1^\pm\sqcup \cdots \sqcup E_n^\pm)(\bfu_0),
\end{equation}
where $E_i^+$ are (oriented) disk cobordisms from the empty set to $K_i$, and $E_i^-$ is obtained from $E^+_i$ by taking connected sum with a torus that is contained in a small solid ball. Notice that the map $\phi$ given by Lemma \ref{lem_disjoint_union_of_links} is natural with respect to the maps induced by ``split'' cobordisms. By Equation \eqref{eqn_basis_SiHI_U_1} and Lemma \ref{lem_disjoint_union_of_links}, the map $\Theta_L$ is an isomorphism.

\begin{Lemma}
\label{lem_Theta_concordance}
The isomorphism $\Theta_L$ does not depend on the choices
of $E_i^{\pm}$.
Moreover, let $L_0=K_1^{(0)}\cup\cdots\cup K_n^{(0)}$ and $L_1=K_1^{(1)}\cup\cdots\cup K_n^{(1)}$ be two unlinks in $(-1,1)\times \Sigma$, and let $S$ be an (oriented) link concordance from $L_1$ to $L_2$ consisting of the disjoint union of concordances from $K_i^{(0)}$ to $K_i^{(1)}$ for all $i$, then we have
$$
\SiHI(S)\circ\Theta_{L_0}  = \Theta_{L_1}.
$$
\end{Lemma}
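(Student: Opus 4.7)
The plan is to first establish that $\Theta_L$ is independent of the choices of $E_i^{\pm}$, and then to deduce the naturality statement under split concordances as a direct consequence of the functoriality of $\SiHI$.

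To show independence, I first strengthen Lemma \ref{lem_disjoint_union_of_links} to the following naturality assertion: the isomorphism $\phi:\SiHI(L_1 \sqcup L_2) \to \SiHI(L_1) \otimes \SiHI(L_2)$ intertwines $\SiHI(S_1 \sqcup S_2)$ with $\SiHI(S_1) \otimes \SiHI(S_2)$ for any pair of cobordisms $S_i$ supported in the corresponding split region of $[-1,1] \times (-1,1) \times \Sigma$. This follows because the excision cobordism defining $\phi$ commutes, up to isotopy rel boundary, with split cobordisms. Iterating this naturality and using the canonical identification $\SiHI(\emptyset)^{\otimes n} \cong \SiHI(\emptyset)$, the composition $\phi \circ \Theta_L$ sends $v_1^{\pm_1} \otimes \cdots \otimes v_n^{\pm_n}$ to $\bigotimes_i \SiHI(E_i^{\pm_i})(\bfu_0)$. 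This reduces well-definedness to the single-component claim that $\SiHI(E^\pm)(\bfu_0) \in \SiHI(K)$ is independent of the choice of disk cobordism $E^\pm$ when $K$ is a single unknot.

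For this single-component step, $K$ lies in a $3$-ball $B \subset (-1,1) \times \Sigma$; choosing essential simple closed curves $c, c'$ on $\Sigma$ disjoint from $B$ with $c \cdot c' = +1$, Definition \ref{def_Phi}(2) yields an isomorphism $\Phi:\SiHI(K) \to \AHI(\mathcal{U}_1)$. After an ambient isotopy, any disk cobordism $E^+$ from $\emptyset$ to $K$ may be assumed to be contained in $[-1, 1] \times B$ and hence disjoint from a neighborhood of the excision torus $T$, so that the excision cobordism defining $\Phi$ carries $E^+$ to a standard disk cobordism $E^+_{\mathrm{ann}}$ from $\emptyset$ to $\mathcal{U}_1$ in the annular $4$-manifold. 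Functoriality of $\SiHI$, together with Lemma \ref{lem_naturality_v0_u0} applied to push $\bfu_0$ through the excision, then gives $\Phi(\SiHI(E^+)(\bfu_0)) = \pm \AHI(E^+_{\mathrm{ann}})(\bfu_0) = \pm \bfv_+ \in \AHI(\mathcal{U}_1)$, where the final element is known to be independent of the choice of disk cobordism by the annular results of \cite{AHI}. Since $\Phi$ is an isomorphism, $\SiHI(E^+)(\bfu_0)$ is independent of the choice of $E^+$, up to the overall sign permitted by Remark \ref{rmk_sign_link_cobordism}; the case of $E^-$ is identical once the torus bump is isotoped into a fixed small ball.

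For the concordance statement, fix disk cobordisms $E_i^{\pm_i, 0}$ defining $\Theta_{L_0}$, and set $E_i^{\pm_i, 1} := S_i \cup_{\{1\} \times K_i^{(0)}} E_i^{\pm_i, 0}$, where the torus bumps in $E_i^{-, 1}$ are placed in small balls disjoint from the $S_i$. Each $E_i^{\pm_i, 1}$ is a valid disk (or disk-plus-torus) cobordism from $\emptyset$ to $K_i^{(1)}$, so functoriality of $\SiHI$ gives $\SiHI(S) \circ \Theta_{L_0}(v_1^{\pm_1} \otimes \cdots \otimes v_n^{\pm_n}) = \SiHI\bigl(\bigsqcup_i E_i^{\pm_i, 1}\bigr)(\bfu_0) = \Theta_{L_1}(v_1^{\pm_1} \otimes \cdots \otimes v_n^{\pm_n})$, where the final equality invokes the first part of the lemma to identify the expression with the invariantly defined $\Theta_{L_1}$. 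The main obstacle is the single-component independence argument: verifying that $\Phi$ is sufficiently natural with respect to cobordisms supported near $K$ to import well-definedness from the annular setting requires careful handling of the excision cobordism as a $4$-manifold, together with bookkeeping of the sign ambiguities governed by Remark \ref{rmk_sign_link_cobordism}.
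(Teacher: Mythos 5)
Your reduction to the single-unknot case (via the split-cobordism naturality of $\phi$) is fine, but the single-component step contains the real gap. You assert that, after ambient isotopy, an arbitrary disk cobordism $E^+$ from $\emptyset$ to $K$ may be assumed to lie in $[-1,1]\times B$, and that the resulting element of $\AHI(\mathcal{U}_1)$ is independent of the disk ``by the annular results of \cite{AHI}.'' Neither claim is available. The first is a genuinely four-dimensional isotopy statement: embedded surfaces in a $4$-manifold that are homotopic rel boundary need not be isotopic, and nothing you say rules out disks that are knotted in $[-1,1]\times(-1,1)\times\Sigma$; an appeal to an ambient isotopy is exactly the point that needs proof. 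The second is also not in \cite{AHI}: there $\bfv_\pm$ are defined by \emph{standard} cup cobordisms, and no statement is proved that an arbitrary embedded disk cobordism to $\mathcal{U}_1$ induces the same map. So your argument imports the hard part from a source where it does not exist.

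The paper handles precisely this point by a different mechanism: Proposition \ref{prop_homotopy_invariance_inst} shows that for links of the form $U_n\cup K_m$ the cobordism maps $\SiHI(S)$ depend only on the homotopy class of $S$ rel boundary. Its proof uses the local system of \cite{KM:YAFT}, freeness of the Floer groups over $\mathcal{R}=\bC[t,t^{-1}]$, and the fact that two embedded, homotopic surfaces differ by twist/finger moves with balanced signed double points, so the maps agree after multiplying by $(t^{-1}-t)^m$ and hence agree on the nose. Given this, Lemma \ref{lem_Theta_concordance} is immediate: since $\pi_2$ of the ambient manifold vanishes, any two choices of $E_i^{\pm}$ are homotopic rel boundary, and the composite $S\circ(E_1^{\pm}\sqcup\cdots\sqcup E_n^{\pm})$ is homotopic rel boundary to a choice of disk cobordisms for $L_1$, so $\SiHI(S)\circ\Theta_{L_0}=\Theta_{L_1}$ with no sign ambiguity (the signs being fixed for oriented cobordisms as in Remark \ref{rmk_sign_link_cobordism}). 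Note also that your route, passing through $\Phi$ (which is only defined up to sign), would at best give independence and naturality up to $\pm1$, which is weaker than the stated equality. To repair your proposal you would need to either prove the isotopy/independence claims directly or, as the paper does, establish homotopy invariance of the cobordism maps first.
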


\begin{proof}
The lemma is an immediate consequence of Proposition \ref{prop_homotopy_invariance_inst} below.
\end{proof}

\begin{Proposition}\label{prop_homotopy_invariance_inst}
Let $n_0,n_1,m_0,m_1$ be non-negative integers.
For $i=0,1$, suppose $K_{m_i}$ consists of $m_i$ parallel copies of essential simple closed curves in 
$\{0\}\times \Sigma$,
and $U_{n_i}$ is an unlink with $n_i$ components in  $(-1,1)\times \Sigma - K_{m_i}$.  
Given two oriented cobordisms $S,S'\subset [-,1,1]\times (-1,1)\times\Sigma$ from 
$U_{n_0}\cup K_{m_0}$ to $U_{n_1}\cup K_{m_1}$. If $S$ and $S'$ are \emph{homotopic} relative to the boundary, then 
$$
\SiHI(S)=\SiHI(S').
$$
\end{Proposition}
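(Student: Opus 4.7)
The plan is to exploit the tensor-product structure from Lemma \ref{lem_disjoint_union_of_links} together with the natural basis $\Theta_L$ from Lemma \ref{lem_Theta_concordance}, reducing homotopy invariance to a calculation on individual connected components of the cobordism. First, I would decompose $S = \bigsqcup_\alpha S_\alpha$ and $S' = \bigsqcup_\beta S'_\beta$ into connected components. Since $\pi_0$ of a surface is a homotopy invariant rel boundary, any homotopy $S \simeq S'$ produces a bijection between the components of $S$ and $S'$ that pairs identical subsets of incoming and outgoing boundary circles, and for corresponding pairs the Euler characteristic and relative homology class in $H_2([-1,1]\times(-1,1)\times\Sigma,\partial)$ agree.

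Next, by Lemma \ref{lem_disjoint_union_of_links} and the functoriality of $\SiHI$, it suffices to prove that $\SiHI(S_\alpha)$ depends only on (i) which boundary components $S_\alpha$ joins, (ii) $\chi(S_\alpha)$, and (iii) $[S_\alpha]$. To establish this, I would compute $\SiHI(S_\alpha)$ in the $\Theta_L$-basis by factoring $S_\alpha$ into elementary Morse pieces --- births (disks), deaths, saddles, and genus-increasing stabilizations. The definitions of $\bfv_+$ and $\bfv_-$ in \eqref{eqn_basis_AHI_U_1} and \eqref{eqn_basis_SiHI_U_1} show that a birth produces $\bfv_+$ and the effect of one genus-one stabilization is to send $\bfv_+$ to $2\bfv_-$. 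Hence $\chi(S_\alpha)$ controls how many stabilizations are present, while $[S_\alpha]$ together with Lemma \ref{lem_naturality_v_+-} governs the $\muu$-action on the resulting basis vector. For the $K_{m_i}$-strands the analogous assertion follows from Lemma \ref{lem_eigenspace_dimension_one_surface_knot} and the fact that the relevant eigenspaces are one-dimensional, so the cobordism map is rigid once $\chi$ and $[S_\alpha]$ are pinned down.

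Finally, one must explain why two connected oriented cobordisms in $[-1,1]\times(-1,1)\times\Sigma$ with matching boundaries, Euler characteristic, and relative homology class give rise to the same $\SiHI$-map. Any two such surfaces become isotopic after a regular homotopy consisting of isotopies and canceling finger-move / Whitney-move pairs. Each finger-Whitney pair can be replaced with the attachment and subsequent removal of a small trivial tube in a $4$-ball; by Lemma \ref{lem_naturality_v_+-} and the functoriality of $\SiHI$, this local modification acts by the scalar $1$ and hence leaves the cobordism map unchanged. The main obstacle I anticipate is this last step --- keeping track of signs, normalizations, and the local model for the Whitney disk in the ambient manifold so that the canceling pair really does act by $+1$ rather than by some nonzero correction. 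A clean approach is to realize the local model as a neck-stretching degeneration and invoke the excision formulas from Section \ref{sec_instanton_Floer_for_links} to identify the local factor with the identity on $\SiHI(\emptyset) \cong \bC$. Combining these ingredients yields $\SiHI(S) = \SiHI(S')$ component by component, completing the proof.
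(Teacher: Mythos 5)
There is a genuine gap, and it sits exactly at the step you flag as the ``main obstacle.'' Your argument reduces everything to the claim that two connected embedded cobordisms with the same boundary, Euler characteristic, and relative homology class ``become isotopic after a regular homotopy consisting of isotopies and canceling finger-move / Whitney-move pairs,'' so that the comparison is a local computation near a trivial tube. This is precisely what fails in dimension four: a finger move can be undone only if one finds an \emph{embedded} Whitney disk, and the failure of the Whitney trick means homotopic surfaces in a $4$--manifold need not be isotopic, nor related by locally modeled canceling pairs inside small $4$--balls. What is true (and what the paper uses, citing \cite{freedman2014topology}) is only that homotopic embedded surfaces are related by isotopies, finger moves, and twist moves \emph{through immersed surfaces with double points}; since $\SiHI$ of a cobordism is a priori only an isotopy invariant of embedded surfaces, you cannot follow such a homotopy directly, and your ``local factor acts by $+1$'' step has no meaning for the immersed intermediate stages. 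The whole content of Proposition \ref{prop_homotopy_invariance_inst} is to bridge exactly this homotopy-versus-isotopy gap, so the proposal assumes the hard part. (A secondary issue: splitting $S$ into connected components and invoking Lemma \ref{lem_disjoint_union_of_links} is not justified either, since disjoint components of a cobordism need not be split by a level $\{t\}\times\Sigma$, and the asserted rigidity ``the map depends only on boundary, $\chi$, and $[S_\alpha]$'' is essentially a restatement of the proposition rather than something the cited lemmas give you.)

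The paper's actual route avoids isotopy arguments altogether: it passes to the local coefficient system $\Gamma$ over $\mathcal{R}=\bC[t,t^{-1}]$ from \cite{KM:YAFT}, notes that $\SiHI(U_{n_i}\cup K_{m_i};\Gamma)$ is a \emph{free} $\mathcal{R}$--module of rank $2^{n_i+m_i}$ (via Lemma \ref{lem_surface_reduce_to_AHI} and \cite{XZ:forest}), and observes that by the universal coefficient theorem it suffices to prove $\SiHI(S;\Gamma)=\SiHI(S';\Gamma)$. Then Kronheimer--Mrowka's \cite{KM:YAFT}*{Proposition 5.2}, which controls how the maps with local coefficients change under finger and twist moves, together with the fact that both $S$ and $S'$ are embedded (so the signed count of double-point-creating moves vanishes), yields $(t^{-1}-t)^m\SiHI(S;\Gamma)=(t^{-1}-t)^m\SiHI(S';\Gamma)$ for some $m$, and freeness of the modules cancels the factor. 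If you want to salvage your approach, you would need some substitute for this mechanism that sees the immersed stages of the homotopy; the basis computations with $\bfv_\pm$, $\bfw_\pm$ and Lemma \ref{lem_naturality_v_+-} do not provide one.
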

\begin{proof}
We need to use the local system introduced in \cite{KM:YAFT}. Let $L\subset (-1,1)\times\Sigma$ be a link, let $\mathcal{R}$ be the ring 
$\mathbb{C}[t,t^{-1}]$ and $\mathcal{B}$ be the configuration space of orbifold connections on 
$$(S^1\times \Sigma,L,S^1\times \{p\}).$$
Given any continuous function 
$$
s:\mathcal{B}\to S^1,
$$
a local system of free rank-1 $\mathcal{R}$-modules on $\mathcal{B}$ is defined in  \cite{KM:YAFT}*{Section 3.9}.
Given any orbifold connection $[A]\in \mathcal{B}$, its holonomy along any component of $L$ lies in a $S^1$-subgroup of $SU(2)$.
In particular, we can take $s$ to be the product of holonomies along all the components of $L$. In this way we obtain a 
local system $\Gamma$ and the corresponding Floer homology group
$$
\SiHI(L;\Gamma)
$$
which is the homology of a chain complex of free $\mathcal{R}$-modules generated by the  critical points of the Chern-Simons functional.
A similar local system can also be defined for the annular instanton Floer homology $\AHI$. Moreover,
Lemma \ref{lem_surface_reduce_to_AHI} still holds when we use these local systems. 

Now take $L_i=U_{n_i}\sqcup K_{m_i}$ ($i=0,1$). We have
$$
\SiHI(L_i;\Gamma)\cong \AHI(L_i;\Gamma)\cong \mathcal{R}^{2^{n_i+m_i}}
$$
where the second isomorphism is from \cite[Example 3.4]{XZ:forest}.
Therefore $\SiHI(L_i;\Gamma)$ ($i=1,2$) is a free $\mathcal{R}$-module. 

We can view $\mathbb{C}$ as the $\mathcal{R}$-module $\mathcal{R}/(t-1)$. In order to show that
$$
\SiHI(S;\mathbb{C})=\SiHI(S';\mathbb{C})
$$
it suffices to show 
\begin{equation}\label{AHI=Gamma}
\SiHI(S;\Gamma)=\SiHI(S';\Gamma)
\end{equation}
by the universal coefficient theorem. Since $S$ and $S'$ are homotopic, $S'$ can be obtained from $S$ by a sequence of the following moves:
\begin{enumerate}
	\item an ambient isotopy of $[-1,1]\times(-1,1)\times\Sigma$,
	\item a twist move that introduces a positive double point, 
	\item a twist move that introduces a negative double point,
	\item a finger move that introduces two double points with opposite signs,
\end{enumerate}
or their inverses \cite{freedman2014topology}. Since both $S$ and $S'$ are embedded, they have the same number of  positive double points (namely zero), therefore the signed count of moves (2) and (4) is zero. By \cite{KM:YAFT}*{Proposition 5.2}, there exists $m\in \mathbb{N}$ such that
\begin{equation}\label{AHI=Gamma=t}
(t^{-1}-t)^m\SiHI(S;\Gamma)=(t^{-1}-t)^m \SiHI(S';\Gamma).
\end{equation}
Since $\SiHI(L_i;\Gamma)$ are free $\mathcal{R}$-modules, \eqref{AHI=Gamma} follows from \eqref{AHI=Gamma=t} 
\end{proof}

Now suppose $L$ is a link embedded in $\{0\}\times \Sigma$, and let $\gamma=\gamma_1\cup\cdots\cup\gamma_m$ be the disjoint union of $m$ parallel essential simple closed curves on $\Sigma$ such that  $\{0\}\times \gamma$ is disjoint from $L$.
Suppose $\gamma$ splits $L$ into the disjoint union of (possibly empty) links $L_1,\cdots,L_m$. Isotope $\Sigma$ such that $\gamma$ is linearly embedded. Let $c$ be an oriented curve that is parallel to $\gamma_i$, where the orientation of $c$ is given by $\mathfrak{o}$ in Definition \ref{def_tau_orientation}. Take a family of $m$ linearly embedded tori $T_1,\cdots,T_{m}\subset S^1\times \Sigma$ with fundamental class  $[S^1\times c] +[\Sigma]$, such that $T_i\cap(\{0\}\times \Sigma) = \gamma_i$. We define
	\begin{equation}
	\label{eqn_def_Psi_gamma}
		\Psi_\gamma : \SiHI(L)\to \otimes_{i=1}^m \SiHI(L_i)
	\end{equation}
to be the isomorphism given by the excision cobordism of cutting $S^1\times \Sigma$ open along $T_1,\cdots, T_{m}$ and gluing the boundary using vertical translation maps. The map $\Psi_\gamma$ is well-defined up to a sign. Moreover, $\Psi_\gamma$ is natural with respect to the maps induced by ``split'' cobordisms.

\begin{remark}
	Let $L$ and $L_1,\cdots,L_m$ be as above, then Lemma \ref{lem_disjoint_union_of_links} already proved 
	\begin{equation*}
			\SiHI(L)\cong \otimes_{i=1}^m \SiHI(L_i).
	\end{equation*}
	The purpose of  \eqref{eqn_def_Psi_gamma} is to choose a canonical map that realizes this isomorphism. 
\end{remark}

Let $c$ be a non-separating simple closed curve on $\Sigma$, we use $K_{m}$ to 
denote $m$ parallel copies of $c$ in 
$\{0\}\times \Sigma$. 
Let $\gamma$ be a union of $m$ parallel essential curves on $\Sigma$ such that $\{0\}\times \gamma$ is disjoint from $K_m$ and it splits $K_m$ 
into the disjoint union of  $m$ copies of $K_1$. 
By \eqref{eqn_def_Psi_gamma}, we have the following isomorphism
\begin{equation}
\label{eqn_Psi_gamma_K_m}
		\Psi_\gamma: \SiHI(K_m)\to \SiHI(K_1)^{\otimes m}.
\end{equation}

Suppose $U_n$ is an $n$-component unlink in $(-1,1)\times \Sigma-K_m$, and fix an order of the components of $U_{n}$.
We  extend the definition of \eqref{eqn_Theta_L} to the link $U_n\cup K_m$. Let $L=U_n\cup K_m\subset (-1,1)\times \Sigma$, 
let $V_1,\cdots,V_n$ be as in \eqref{eqn_Theta_L}. First define the map
\begin{equation*}
 F:	\bigotimes_{i=1}^n V_i \otimes \SiHI(K_l)  \to \SiHI(L)
\end{equation*}
by 
$$
F(v_1^\pm\otimes \cdots \otimes v_n^\pm\otimes x):=
\SiHI(E_1^\pm\sqcup \cdots \sqcup E_n^\pm \sqcup [-1,1]\times K_m)(x)
$$
where $x\in \SiHI(K_m)$, and $v_i^\pm\in V_i$, $E_i^\pm\subset [-1,1]\times(-1,1)\times\Sigma-[-1,1]\times K_m$ are the same as in \eqref{eqn_Theta_L}.
Now define
$$
\Theta_L:	\bigotimes_{i=1}^k V_i \otimes \SiHI(K_1)^{\otimes l}  \to \SiHI(L)
$$
by 
\begin{equation}
\label{eqn_def_Theta_L}
	\Theta_L=F\circ \big(\id_{\bigotimes_{i=1}^k V_i}\otimes \Psi_\gamma^{-1}\big),
\end{equation}
where $\Psi_\gamma$ is the map given by \eqref{eqn_Psi_gamma_K_m}.
Since $\Psi_\gamma$ is an isomorphism, it follows from Equation \eqref{eqn_basis_SiHI_U_1} and Lemma \ref{lem_disjoint_union_of_links} that $\Theta_L$ is an isomorphism.

\begin{Lemma}\label{lem_canonical_decomposition_SiHI}
The isomorphism $\Theta_L$ in \eqref{eqn_def_Theta_L} does not
depend on the choices of $E_i^\pm$.
Let $K_m$ be the link as above and $U_n, U_n'$ be two $n$-component unlinks in
$(-1,1)\times \Sigma- K_m$. Denote $U_n\cup K_m$ and $U_n'\cup K_m$ by
$L$ and $L'$ respectively.
Fix an order of the components of $U_n$ and $U_n'$.
Let $S\subset [-1,1]\times(-1,1)\times \Sigma$ be a concordance from $L$ to $L'$ that is the product concordance on $K_l$ and preserves the orders of the components of $U_{k}$ and $U_k'$. 
Let $\Theta_L, \Theta_{L'}$ be the maps given by \eqref{eqn_def_Theta_L}.
Then we have  
$$
\SiHI(S)\circ \Theta_{L}=
\pm \Theta_{L'}.
$$
\end{Lemma}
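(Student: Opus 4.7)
The plan is to reduce both claims to Proposition \ref{prop_homotopy_invariance_inst}. The map $\Psi_\gamma^{-1}$ in \eqref{eqn_def_Theta_L} depends only on $\gamma$ and $K_m$ and acts on the factor $\SiHI(K_1)^{\otimes m} \to \SiHI(K_m)$, so the auxiliary choices appearing in $\Theta_L$ are encoded entirely in the factor $F$, which is the cobordism map induced by $E_1^\pm \sqcup \cdots \sqcup E_n^\pm \sqcup ([-1,1]\times K_m)$. It therefore suffices to prove that (i) $F$ is independent of the $E_i^\pm$ up to an overall sign, and (ii) post-composing $F$ with $\SiHI(S)$ yields, up to sign, the analogous cobordism map $F'$ built from the disks $\widetilde{E}_i^\pm$ associated to $L'$.

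Both statements reduce to a homotopy claim in the 4-manifold $W := [-1,1]\times (-1,1)\times \Sigma$. Since $W$ is homotopy equivalent to the torus $\Sigma$ and in particular has $\pi_2(W)=0$, any two embedded disks in $W$ bounding the same unknot component $K_i$ are homotopic relative to $K_i$ in $W$. For $E_i^-$, the additional torus summand sits inside a small ball, where any two choices are again homotopic rel boundary. Applying Proposition \ref{prop_homotopy_invariance_inst} to the resulting homotopy of cobordisms (of the form $U_n\cup K_m$) proves (i), so $\Theta_L$ is well-defined up to an overall sign.

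For (ii), I would decompose $S = S_U \sqcup ([-1,1]\times K_m)$, where $S_U = S_1 \sqcup \cdots \sqcup S_n$ and each $S_i$ is the concordance taking the $i$th component of $U_n$ to the $i$th component of $U_n'$. By functoriality,
\begin{equation*}
\SiHI(S) \circ F\bigl(v_1^\pm \otimes \cdots \otimes v_n^\pm \otimes x\bigr) = \SiHI\bigl((E_1^\pm \cup S_1) \sqcup \cdots \sqcup (E_n^\pm \cup S_n) \sqcup ([-1,1]\times K_m)\bigr)(x).
\end{equation*}
Each $E_i^\pm \cup S_i$ is a cobordism from $\emptyset$ to the $i$th component of $U_n'$ of the same genus as $\widetilde{E}_i^\pm$, so it is homotopic to $\widetilde{E}_i^\pm$ rel boundary by the same $\pi_2(W)=0$ argument. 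A second application of Proposition \ref{prop_homotopy_invariance_inst} gives equality of the two cobordism maps up to sign. Because the $\Psi_\gamma^{-1}$ factor coincides in the definitions of $\Theta_L$ and $\Theta_{L'}$ (the curves $\gamma$ and the link $K_m$ are preserved by $S$), post-composing with $\id \otimes \Psi_\gamma^{-1}$ yields $\SiHI(S)\circ \Theta_L = \pm\,\Theta_{L'}$.

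The main subtlety I anticipate is verifying that Proposition \ref{prop_homotopy_invariance_inst} applies in this setting: the proposition demands a homotopy in the full 4-manifold $W$, rather than in the complement of $[-1,1]\times K_m$, which is essential since the link complement can have nontrivial $\pi_2$. Once this observation is in hand, the rest is a direct functoriality computation, and the sign is inherited from the sign ambiguities of the excision cobordism defining $\Psi_\gamma$ together with those in Proposition \ref{prop_homotopy_invariance_inst}.
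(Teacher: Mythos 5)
Your proposal is correct and follows essentially the same route as the paper, whose proof consists precisely of invoking Proposition \ref{prop_homotopy_invariance_inst}; your contribution is to spell out the rel-boundary homotopies that the paper leaves implicit. One small point of care: for the genus-one surfaces $E_i^-$ (and $E_i^-\cup S_i$) the vanishing of $\pi_2(W)$ alone is not enough --- one also uses that $W\simeq T^2$ is aspherical and that the handle lies in a small ball, so its loops are null-homotopic and the relative obstruction class in $H^1(F,\partial F;\pi_1(W))$ vanishes, which is exactly what your ``torus summand in a small ball'' remark amounts to.
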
 
\begin{proof}
The lemma is an immediate consequence of Proposition \ref{prop_homotopy_invariance_inst}.
\end{proof}

\begin{remark}
The sign ambiguity of Lemma \ref{lem_canonical_decomposition_SiHI} comes from the fact that $\Psi_\gamma$ is only well-defined up to a sign.
\end{remark}

A similar map can be defined for the annular instanton Floer homology. Recall that $A$ denotes the annulus. We use $\mathcal{K}_m$ to denote the annular link given by the closure of a trivial $m$--braid, and we require that $\mathcal{K}_m$ is included in $\{0\}\times A$. Let $\mathcal{U}_n$ be an unlink in  $(-1,1)\times A-\mathcal{K}_m$, and let $L=\mathcal{U}_n\cup \mathcal{K}_m$. 

Fix an order of the components of $\mathcal{U}_n$.
Let $\{V_i\}_{i=1}^n$ be 2-dimensional complex vector spaces associated to the components of $\mathcal{U}_n$ as in \eqref{eqn_Theta_L},
then the same construction as above gives an isomorphism 
$$
F^{\AHI}:\otimes_{i=1}^n {V_i}\otimes 
\AHI(\mathcal{K}_m)\to \AHI(L),
$$
By \cite[Proposition 4.3]{AHI}, there is an isomorphism
$$
	\Psi^{\AHI}:\AHI(\mathcal{K}_m)\to \AHI(\mathcal{K}_1)^{\otimes m},
$$
and we define
$$
	\Theta_L^{\AHI}: \otimes_{i=1}^n {V_i}\otimes 
\AHI(\mathcal{K}_1)^{\otimes m} \to \AHI(L)
$$
by 
\begin{equation}
\label{eqn_def_Theta_AHI}
	\Theta_L^{\AHI}=F\circ \big(\id_{\bigotimes_{i=1}^k V_i}\otimes (\Psi^{\AHI})^{-1}\big).
\end{equation}

\section{The spectral sequence}
\label{sec_spectral_sequence}

Throughout this section, we will use $\Sigma$ to denote an (oriented) 2-torus.

Given a link $L\subset (-1,1)\times \Sigma$, the Khovanov skein homology
 $\SiKh(L)$ is graded by $\bZ\mathcal{C}$, where $\mathcal{C}$ denotes the isotopy classes of 
essential simple closed curves on $\Sigma$. In order to establish a connection between
$\SiKh(L)$ and $\SiHI(L)$,
we reduce the grading over $\bZ\mathcal{C}$ to a coarser grading system. 
Orient each element of $\mathcal{C}$ using the orientation system $\mathfrak{o}$ given by  Definition \ref{def_tau_orientation}. Let $c\subset \Sigma$ be an oriented essential 
simple closed curve. Recall that $\gamma\cdot c$ denotes the algebraic intersection number. We define a group homomorphism
\begin{align*}
    \bZ\mathcal{C} &\to \bZ \\
    [\gamma]    &\mapsto  \gamma \cdot c.
\end{align*}
This map reduces the $\bZ\mathcal{C}$--grading to a $\bZ$--grading on $\CKh(L)$ and $\SiKh(L)$, which will be called the \emph{$c$--grading}.
By definition, the $c$--grading of the generator $v(\gamma)_\pm$ is given by $\pm \gamma\cdot c$. Let $\SiKh_c(L,i;\bZ/2)$ and $\SiHI_c(L,i)$ be the components of $\SiKh(L;\bZ/2)$ and $\SiHI(L)$ respectively with 
$c$--grading equal to $i$.

\begin{Theorem}\label{theorem_spectral_sequence} 
Suppose $L\subset (-1,1)\times \Sigma$ is a link and 
$c\subset \Sigma$ is an oriented essential simple closed curve.
Then
$$
\rank_{\bZ/2} \SiKh_c(L,i;\bZ/2)\ge \dim_\bC \SiHI_c(L,i).
$$ 
\end{Theorem}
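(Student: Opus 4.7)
The plan is to construct a Kronheimer--Mrowka type cube-of-resolutions spectral sequence whose $E_1$ page is the Asaeda--Przytycki--Sikora chain complex $\CKh(L;\bZ/2)$ (so that $E_2 = \SiKh(L;\bZ/2)$), and which converges to $\SiHI(L;\bZ/2)$, with everything respecting the $c$--grading. Combined with a universal coefficient comparison $\dim_\bC \SiHI_c(L,i) \le \dim_{\bZ/2}\SiHI_c(L,i;\bZ/2)$, the desired inequality then follows from the general rank inequality between $E_2$ and $E_\infty$ of a spectral sequence, applied in each $c$--grading.

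First I would fix a diagram $D$ for $L$ on $\Sigma$ with $d$ ordered crossings. For each vertex $v \in \{0,1\}^d$ of the cube, the resolution $L_v$ sits as a disjoint union of trivial and essential circles in $\{0\}\times\Sigma$, and the canonical isomorphism $\Theta_{L_v}$ from Section \ref{sec_canonical_isomorphisms} identifies $\SiHI(L_v)$ with $\bigotimes_{\gamma\in S_v} V(\gamma) = \CKh_v(L)$, matching the instanton generators $\bfv_\pm$ on each annular factor, and the eigenvectors of $\muu(T_c)$ on each toroidal essential factor (via Lemma \ref{lem_eigenspace_dimension_one_surface_knot}), to the Khovanov generators $\mathbf{v}(\gamma)_\pm$. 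For each edge $v\to u$ of the cube, the saddle cobordism $S_{vu} \subset [-1,1]\times\{0\}\times\Sigma$ induces a map $\SiHI(S_{vu}) \colon \SiHI(L_v) \to \SiHI(L_u)$, and the central computation is to check, modulo $2$, that this map agrees with the corresponding APS merge/split formula recalled in Section \ref{sec_intro}. There are six cases (merge/split of the three pairs trivial--trivial, trivial--essential, essential--essential), and each reduces by Lemma \ref{lem_Theta_concordance} and Lemma \ref{lem_canonical_decomposition_SiHI} to a local model supported near the saddle. The trivial--trivial and trivial--essential cases reduce via Lemma \ref{lem_torus_reduce_to_AHI} to the known annular instanton computations in \cite{AHI}, while the essential--essential cases are handled using the torus-excision map $\Phi_{c,c'}$ from Section \ref{sec_canonical_isomorphisms} together with a moduli count of the flavor of Lemma \ref{lem_naturality_v0_u0}. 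Assembling the resulting cube of chain complexes via the standard iterated mapping cone construction produces a filtered chain complex whose associated graded is $\CKh(L;\bZ/2)$ and whose total homology is $\SiHI(L;\bZ/2)$.

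The $c$--grading compatibility is easy to verify: every saddle cobordism $S_{vu}$ sits inside $[-1,1]\times\{0\}\times\Sigma$ and is therefore disjoint from the torus $T_c = S^1\times c$ used to define the $c$--grading on $\SiHI$. Hence $\SiHI(S_{vu})$ commutes with $\muu(T_c)$ and preserves the $c$--grading. On the Khovanov side, the APS differentials preserve the $\bZ\mathcal{C}$--grading, hence the $c$--grading. Thus the spectral sequence splits as a direct sum over the $c$--grading, and one obtains
$$
\rank_{\bZ/2}\SiKh_c(L,i;\bZ/2) \;\ge\; \dim_{\bZ/2}\SiHI_c(L,i;\bZ/2) \;\ge\; \dim_\bC\SiHI_c(L,i).
$$

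The main obstacle is the identification of the essential--essential saddle maps, particularly the merge of two parallel essential circles into a trivial one: the APS formula forces $\mathbf{v}(\gamma_1)_+\otimes\mathbf{v}(\gamma_2)_+$ and $\mathbf{v}(\gamma_1)_-\otimes\mathbf{v}(\gamma_2)_-$ to vanish while mapping the mixed pairs to $\mathbf{v}(\gamma)_-$, and reproducing this pattern on the instanton side demands a careful analysis of the moduli of flat orbifold connections on the excision cobordism (along the lines of the proof of Lemma \ref{lem_naturality_v0_u0}), together with consistent bookkeeping of the choices $\tau$ and $\mathfrak{o}$ from Definition \ref{def_tau_orientation}. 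Once these six local saddle models are pinned down, the remaining cube-of-resolutions assembly, the spectral sequence comparison, and the universal coefficient step are routine.
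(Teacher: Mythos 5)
Your overall architecture --- the cube of resolutions, the canonical isomorphisms $\Theta_{L_v}$ identifying $\SiHI(L_v)$ with $\CKh_v(L)$, the reduction of the saddle maps to the annular computations (Proposition \ref{prop_AHI_differential}, Proposition \ref{prop_differeltial_pair_pants}), and the remark that the $c$--grading is preserved because the saddles are disjoint from $T_c$ --- matches the paper. The genuine gap is in the coefficient bookkeeping. You propose a spectral sequence with $\bZ/2$ coefficients converging to ``$\SiHI(L;\bZ/2)$'' and then a universal-coefficient comparison $\dim_\bC\SiHI_c(L,i)\le\dim_{\bZ/2}\SiHI_c(L,i;\bZ/2)$. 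Neither object on the right-hand side exists in this framework: $\SiHI(L)$ is by definition a simultaneous generalized eigenspace (eigenvalue $2$ for $\mu(\pt)$, eigenvalue $2g+n-2$ for $\muu(\Sigma)$), and the $c$--grading is the generalized eigenspace decomposition of $\muu(T_c)$, whose eigenvalues $-n+2i$ all collapse in characteristic $2$. So there is no mod-2 group $\SiHI_c(L,i;\bZ/2)$, no $c$--graded mod-2 spectral sequence to which the $E_2\ge E_\infty$ rank inequality could be applied, and no integral lift of an eigenspace on which to run the universal coefficient theorem on the abutment side. The Kronheimer--Mrowka cube (Proposition \ref{prop_KM_spectral_seq}) and the chain-level $\muu$--maps used to split by $c$--degree are likewise statements about the $\bC$--coefficient theory; even identifying the generators $\bfw_\pm$ of $\SiHI(K_1)$ with the APS generators uses the (complex) eigenspace structure.

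The fix --- and the paper's route --- is to keep the entire spectral sequence over $\bC$: it converges to $\SiHI_c(L,i)$ and has $E_1=\bigoplus_v\SiHI_c(L_v,i)$ with differential $\sum\pm\SiHI(S_{vu})$ (Lemma \ref{lemma_spectral_sequence_E1}), and the coefficient comparison is performed on the $E_1$ page rather than on the abutment. Because the $E_1$ differential agrees with the APS differential only up to signs (Lemma \ref{lemma_spectral_sequence_d1}), one observes that $(E_1,d_1)\cong(E_1',d_1')\otimes_\bZ\bC$ for some integral complex whose mod-2 reduction is the APS complex in $c$--degree $i$, and the universal coefficient theorem then gives $\rank_{\bZ/2}\SiKh_c(L,i;\bZ/2)\ge\dim_\bC H(E_1,d_1)\ge\dim_\bC\SiHI_c(L,i)$. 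Your point that signs are irrelevant mod $2$ is exactly what makes this step work, but it must be deployed at the $E_1$ level; as written, your plan requires a mod-2 instanton theory with a $c$--grading that is not available. (A minor further point: the essential--essential saddles are also obtained from the annular computations via $\Phi_{c,c'}$ and Proposition \ref{prop_differeltial_pair_pants}; no additional moduli count beyond Lemma \ref{lem_naturality_v0_u0} is needed.)
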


We can now prove the main results of Section \ref{sec_intro} assuming Theorem \ref{theorem_spectral_sequence}. 
\begin{reptheorem}{thm_disjoint_from_annulus_intro}
Suppose $L$ is a link in $(-1,1)\times \Sigma$ and $c$ is an oriented essential simple closed curve on $\Sigma$.
Then L can be isotoped to a link disjoint from $(-1,1)\times c$
 if and only if $\SiHI(L;\bZ/2)$ is supported at $c$--degree $0$. 
\end{reptheorem}

\begin{proof}
The ``only if'' part is trivial, we only need to prove the ``if'' part.
Suppose  $\SiHI(L;\bZ/2)$ is supported at $c$--degree $0$, then by 
Theorem \ref{theorem_spectral_sequence}, $\SiHI(L)$ is supported at $c$--degree $0$, and the result follows from Proposition \ref{prop_c_grading_zero_implies_disjoint}.
\end{proof}

\begin{reptheorem}{thm_main_detection_result}
Suppose $L$ is a non-empty link in $(-1,1)\times \Sigma$ such that 
$$\rank_{\bZ/2} \SiKh(L;\bZ/2)\le 2.$$ Then $L$ is isotopic to 
a knot embedded in $\{0\}\times \Sigma$, and hence 
$$\rank_{\bZ/2} \SiKh(L;\bZ/2)= 2.$$
\end{reptheorem}
\begin{proof}
By Theorem \ref{theorem_spectral_sequence} and Proposition \ref{prop_inst_rank_2_disjoint_from_c}, there
is an essential simple closed curve $c\subset \Sigma$ such that $L$ can be isotoped to 
a link disjoint from $(-1,1)\times c$. Therefore $L$ can be viewed as a link in 
$(-1,1)\times (\Sigma- N(c))\cong (-1,1)\times A$, where $A$ denotes an annulus. 

By definition, $\rank_{\bZ/2}\SiKh(L;\bZ/2)= \rank_{\bZ/2}\AKh(L;\bZ/2)$.
On the other hand $\AKh(L;\bC)$ carries an $\mathfrak{sl}_2(\bC)$-action 
(see \cite{GLW_AKh_rep})
where the $f$-grading is 
the weight. Since
$$
\dim_\bC \AKh(L;\bC)\le \rank_{\bZ/2} \AKh(L;\bZ/2)=\rank_{\bZ/2}\SiKh(L;\bZ/2)\le 2,
$$
this implies the top $f$-grading of $\AKh(L;\bC)$ is no greater than $1$.
Therefore by \cite[Theorem 1.3]{XZ:excision}, $L$ is either a parallel copy of $c$, or is contained in a solid 3--ball. In the latter case, $\SiKh(L;\bZ/2)$ has the same rank as the Khovanov homology of $L$ as a link in the 3-ball, therefore it follows from Kronheimer-Mrowka's unknot detection theorem \cite{KM:Kh-unknot} that $L$ is an unknot. Either way, $L$ is isotopic to a knot embedded in $\{0\}\times\Sigma$.
\end{proof}
The rest of this section is devoted to the proof of Theorem \ref{theorem_spectral_sequence}. 

\subsection{Construction of the spectral sequence}
Let $L$ be a link in $(-1,1)\times\Sigma$ and $D$ be a diagram of $L$ with $d$ crossings. 
Given any $v\in \{0,1\}^d$, let $L_v\subset \{0\} \times \Sigma\subset (-1,1)\times\Sigma$
be the corresponding resolved diagram.

Let $(C_v,d_v)$ be the Floer \emph{chain complex} of 
$$
\SiHI(L_v)=\II(S^1\times \Sigma,L_v,S^1\times \{p\}).
$$
Let $C(L)$ be the Floer chain complex of $\SiHI(L)$. 

For $v\in \{0,1\}^d$, let $\|v\|_1$ be the sum of coordinates of $v$.
If $u,v\in\{0,1\}^d$, we write $v\ge u$ if all the coordinates of $v$ are greater than or equal to the corresponding coordinates of $u$.
Suppose $v\ge u$, then there is a standard cobordism $S_{vu}\subset [-1,1]\times (-1,1)\times \Sigma$
from $L_v$ to $L_u$.

By \cite[Section 6]{KM:Kh-unknot}, we have the following result.
\begin{Proposition}[{\cite[Theorem 6.8]{KM:Kh-unknot}}]
\label{prop_KM_spectral_seq}
There exist linear maps $f_{vu}:C_v\to C_u$ for all $u,v\in \{0,1\}^d$ with $v\ge u$, such that
$$
(\mathbf{C},\mathbf{D}):=(\bigoplus_{v\in \{0,1\}^d} C_v, \sum_{v\ge u} f_{vu})
$$
is a chain complex, and for $v=u$, we have $f_{vv}=d_v$, for $v\ge u$ and $\|v-u\|_1=1$, we have
$f_{vu}=\pm \SiHI(S_{vu})$.
Moreover, there is a chain map
$$
\mathbf{H}:C(L)\to \mathbf{C}
$$
which induces an isomorphism on homology.
\end{Proposition}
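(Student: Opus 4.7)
The plan is to follow the cube-of-resolutions construction of Kronheimer--Mrowka \cite{KM:Kh-unknot}*{Section 6} essentially verbatim, checking that each of its ingredients remains valid for singular instanton Floer homology of links in $(-1,1)\times\Sigma$ rather than in $S^3$. The basic input is the un-oriented skein exact triangle, which relates the three resolutions at a crossing via chain maps induced by the standard cobordisms. This exact triangle holds in the ambient manifold $S^1\times\Sigma$ with the basepoint $\omega = S^1\times\{p\}$ because the triangle is a local statement proved using neck-stretching and surgery in a ball around the crossing, and is therefore insensitive to the topology of the surrounding manifold.

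The first step is to define $f_{vu}$ for $v\ge u$ with $\|v-u\|_1=1$ as (a chosen sign of) the cobordism-induced chain map $\SiHI(S_{vu})$, and to set $f_{vv}=d_v$. For a 2-face of the cube, the two composite cobordisms $S_{vw}\cup S_{wu}$ and $S_{vw'}\cup S_{w'u}$ (where $\{w,w'\}$ are the intermediate vertices) are isotopic rel boundary in $[-1,1]\times(-1,1)\times\Sigma$, since they differ only in a 4-ball around the two crossings. By the functoriality of the cobordism maps (Remark \ref{rmk_sign_link_cobordism}), the two induced maps on Floer homology agree, so at the chain level one obtains a chain homotopy which we take as $f_{vu}$ (up to sign). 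The maps $f_{vu}$ for faces of dimension $k\ge 3$ are then constructed by induction on $k$ via standard obstruction theory: the coboundary equation $\sum_{v\ge w\ge u} f_{vw}\circ f_{wu}=0$ for a $k$-face is equivalent, once the lower-dimensional maps are fixed, to the assertion that a certain cycle in $\Hom(C_v,C_u)$ is a boundary. The vanishing of the obstruction follows because the relevant composite cobordisms are all isotopic rel boundary, exactly as in \cite{KM:Kh-unknot}*{Section 6}, where it is ultimately traced back to standard gluing and compactness results for moduli spaces of singular instantons; these gluing results are insensitive to the ambient $3$-manifold and so transfer immediately to our setting.

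Next, to build the quasi-isomorphism $\mathbf{H}:C(L)\to\mathbf{C}$, one enlarges the cube by one dimension, treating the original link $L$ (the "unresolved" diagram) as an additional vertex sitting above the whole $d$-dimensional cube. The edge cobordism from $L$ to $L_{(1,\dots,1)}$ is a sequence of crossing changes, and the higher-dimensional maps in this augmented cube are constructed by the same obstruction-theoretic induction. The restriction to the top face, paired with the compression of the bottom face, gives $\mathbf{H}$, and by construction the mapping cone of $\mathbf{H}$ is the total complex of the full $(d+1)$-cube of cobordism maps.

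To show $\mathbf{H}$ is a quasi-isomorphism, filter $\mathbf{C}$ by $\|v\|_1$ and argue by induction on $d$: when $d=0$ the map is the identity; for the inductive step, one splits the cube into the two parallel sub-cubes corresponding to the $0$- and $1$-resolutions at a chosen crossing, and the resulting exact triangle in the cube complex is identified with the skein exact triangle for that crossing via the five-lemma. The main obstacle I anticipate is verifying the higher-coherence obstruction vanishing in a self-contained way for singular instantons on cobordisms inside $[-1,1]\times(-1,1)\times\Sigma$ with the link-bundle structure preserved; this reduces to confirming that the moduli-space arguments of \cite{KM:Kh-unknot}*{Section 6}, together with the homotopy-invariance statement already proved as Proposition \ref{prop_homotopy_invariance_inst}, apply unchanged in our $3$-manifold, which should be routine but requires care with signs and with the choice of perturbations used to define the chain complexes $C_v$.
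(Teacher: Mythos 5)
There is a genuine gap, and it sits exactly where you flag it yourself. The paper does not actually reprove this statement: it quotes \cite{KM:Kh-unknot}*{Theorem 6.8}, whose cube-of-resolutions construction is local to a ball containing the crossings and is carried out for an admissible triple in a general closed $3$-manifold, so it applies verbatim to $(S^1\times \Sigma, L_v, S^1\times\{p\})$; the only thing to check is admissibility, which is immediate. Your outline replaces the heart of that construction with ``standard obstruction theory,'' and that substitution does not work as stated. Over $\bC$ the $2$-face homotopies do exist (chain maps between complexes of vector spaces inducing equal maps on homology are chain homotopic), but for a $k$-face with $k\ge 3$ the obstruction cocycle assembled from the previously chosen lower homotopies is a genuine chain map whose induced map on homology need not vanish; nothing in ``the composite cobordisms are isotopic rel boundary'' forces it to vanish, because the issue is the coherence of the chosen homotopies across all faces simultaneously, which is precisely what has to be produced and is not formal. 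Kronheimer and Mrowka do not argue this way: they define the maps $f_{vu}$ for $\|v-u\|_1\ge 2$ geometrically, by counting singular instantons over families of metrics and secondary perturbations parametrized by polytopes attached to the faces of the cube, and the relations $\sum_{v\ge w\ge u} f_{vw}f_{wu}=0$ follow from identifying the codimension-one boundary strata of the compactified parametrized moduli spaces, via gluing and compactness. Proposition \ref{prop_homotopy_invariance_inst} of this paper is a statement about induced maps on homology and cannot supply this chain-level coherence.

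The same remark applies to $\mathbf{H}$: in \cite{KM:Kh-unknot} the quasi-isomorphism from $C(L)$ to the cube is again built from families of metrics on the standard cobordisms occurring in the unoriented skein triangle (the crossing diagram maps to its resolutions through the saddle-type cobordisms of the triangle, not through ``crossing changes''), and the identification of its mapping cone uses the full $\{0,1,2\}$-periodic structure at each crossing rather than a five-lemma argument grafted onto abstractly chosen homotopies. Your filtration-and-induction scheme is the right shape of argument, but it only becomes a proof once the higher maps and $\mathbf{H}$ are given by the explicit metric-family construction (or some equivalent chain-level mechanism). So the efficient fix is the one the paper takes: invoke Theorem 6.8 of \cite{KM:Kh-unknot} directly, noting that its proof is local and insensitive to replacing $S^3$ by $S^1\times\Sigma$ with $\omega=S^1\times\{p\}$; if you insist on a self-contained proof, you must import the families-of-metrics construction rather than appeal to obstruction theory.
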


Let $c\subset\Sigma$ be an oriented essential simple closed curve. 
Let $r_v:C_v\to C_v$ be the chain map which defines the action
$\muu(S^1\times c):\SiHI(L_v)\to \SiHI(L_v)$.  
The discussion in \cite[Section 5]{AHI} yields the following result.
\begin{Proposition}[{\cite[Proposition 5.7]{AHI}}]
There exists linear maps $r_{vu}:C_v\to C_u$ for all $v\ge u$ in $\{0,1\}^d$ such that
$$
\mathbf{R}=\sum_{v\ge u} r_{vu}:\mathbf{C}\to \mathbf{C}
$$
is a chain map with respect to the differential $\mathbf{D}$ in Proposition \ref{prop_KM_spectral_seq}, and $r_{vv}=r_v$. Moreover,
the isomorphism
$$
\mathbf{H}_\ast: \SiHI(L)\to H_\ast(\mathbf{C})
$$
intertwines $\muu(S^1\times c)$ on  $\SiHI(L)$ with $\mathbf{R}_\ast$ on 
$ H_\ast(\mathbf{C})$.
\end{Proposition}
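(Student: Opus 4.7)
The construction of $\mathbf{R}$ mirrors that of the differential $\mathbf{D}$ in Proposition \ref{prop_KM_spectral_seq}, but with the $\muu(S^1\times c)$ decoration running in parallel through the cube of resolutions. My plan is to adapt the argument of \cite[Proposition 5.7]{AHI}, which establishes essentially the same statement in the annular setting, to the present case where the ambient surface is a torus, making crucial use of the disjointness of $S^1\times c$ from each standard resolution cobordism.

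First, recall that $\muu(S^1\times c)$ on each $\SiHI(L_v)$ is realized at the chain level by $r_v\colon C_v\to C_v$, defined by cutting down parametrized moduli spaces by a four-dimensional cohomology class supported in a neighborhood of $S^1\times c$. Since each $L_v$ is a collection of disjoint simple closed curves in $\{0\}\times\Sigma$, a small isotopy of $c$ makes $S^1\times c$ disjoint from $L_v$ inside $S^1\times\Sigma$; similarly, the product surface $[-1,1]\times S^1\times c$ can be taken disjoint from every standard cobordism $S_{vu}\subset[-1,1]\times(-1,1)\times\Sigma$. This disjointness is what makes the insertion of $\muu(S^1\times c)$ commute cleanly with gluing of instantons near the crossings.

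Second, to define $r_{vu}$ for $v>u$, I follow the cube-of-resolutions framework of \cite{KM:Kh-unknot} used to construct the $f_{vu}$: over each face of the cube of dimension $k=\|v-u\|_1$, one considers the family of metrics (or perturbations) on the composite cobordism $S_{vu}$ parameterized by the associahedron, and counts zero-dimensional points of the singular instanton moduli space decorated by the $\muu(S^1\times c)$ insertion along $[-1,1]\times S^1\times c$. The case $v=u$ produces $r_{vv}=r_v$ by construction. The chain map identity $\mathbf{D}\mathbf{R}+\mathbf{R}\mathbf{D}=0$ follows from the standard codimension-one boundary analysis of one-dimensional strata: the boundary decomposes into compositions of the form $f_{vw}r_{wu}+r_{vw}f_{wu}$, and the $\muu$-insertion contributes no additional boundary terms because its supporting surface is disjoint from every $S_{vw}$.

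Third, the intertwining property for $\mathbf{H}_\ast$ is obtained by the same type of cube argument applied to the cobordism that realizes $\mathbf{H}$. The chain map $\mathbf{H}\colon C(L)\to\mathbf{C}$ is built, as in \cite[Section 6]{KM:Kh-unknot}, from a family of cobordisms interpolating between $L$ and its resolutions $L_v$; the surface $[-1,1]\times S^1\times c$ remains disjoint from every cobordism in this family, so decorating the entire construction by $\muu(S^1\times c)$ produces a chain homotopy between $\mathbf{R}\circ\mathbf{H}$ and $\mathbf{H}\circ\muu(S^1\times c)$, which gives the desired intertwining after passing to homology. The main technical hurdle will be the bookkeeping of orientations, signs, and perturbations required for transversality of the decorated moduli spaces and compatibility with gluing; since these issues are settled in \cite[Section 5]{AHI} in the essentially identical annular setting, the adaptation to $S^1\times\Sigma$ amounts to observing that every step of that argument depends only on a tubular neighborhood of $S^1\times c$, which is the same in both geometries.
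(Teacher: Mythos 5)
Your proposal is correct and follows essentially the same route as the paper, which does not reprove this statement but simply invokes the construction of \cite[Section 5]{AHI}: chain-level $\muu$--insertion maps $r_{vu}$ defined over the cube's families of metrics, the standard codimension-one boundary analysis for the chain-map identity, and the same decoration of the cobordisms realizing $\mathbf{H}$ to get the intertwining on homology. One small correction: the cohomology class cutting down the moduli spaces for $\muu(S^1\times c)$ is two-dimensional (the operator has degree $2$), not four-dimensional, but this does not affect the structure of your argument.
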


By filtering the cube $\mathbf{C}$ with the $l^1$-norm on $\{0,1\}^d$, 
the above two propositions give  
 the following spectral sequence. We use $\SiHI_c(L,i)$ to denote the component of  $\SiHI(L)$ with $c$--grading equal to  $i$.
\begin{Lemma}\label{lemma_spectral_sequence_E1}
Let $c$ be an essential simple closed curve on $\Sigma$ and let $i\in\bZ$. There exists a spectral sequence which converges to $\SiHI_c(L,i)$ and whose $E_1$--page is given by 
$$
(E_1,d_1)=\big(\bigoplus_{v\in \{0,1\}^d} \SiHI_c(L_v,i);\sum_{\substack{ v\ge u \\ \|v-u\|_1=1 }} \pm \SiHI(S_{vu})\big).
$$
\end{Lemma}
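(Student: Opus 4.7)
The plan is to obtain the spectral sequence from the decreasing $\ell^1$-filtration on the cube complex $\mathbf{C}$ and then restrict it to the generalized eigenspaces of the chain endomorphism $\mathbf{R}$ realizing $\muu(S^1\times c)$.

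Define $\mathcal{F}^p\mathbf{C}:=\bigoplus_{\|v\|_1\ge p} C_v$. Since $f_{vu}$ is nonzero only when $v\ge u$, and $v\ge u$ implies $\|u\|_1\le \|v\|_1$, the differential $\mathbf{D}$ preserves $\mathcal{F}^\bullet$. The filtration is bounded, so standard homological algebra produces a convergent spectral sequence. Because the filtration-preserving part of $\mathbf{D}$ is the diagonal $\sum_v d_v$, we obtain
\[
E_1\cong\bigoplus_{v\in\{0,1\}^d}\SiHI(L_v),
\]
and the component of $\mathbf{D}$ that drops filtration by exactly one induces
\[
d_1 = \sum_{\|v-u\|_1=1}\pm \SiHI(S_{vu}),
\]
by Proposition \ref{prop_KM_spectral_seq}. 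The sequence converges to $H_\ast(\mathbf{C},\mathbf{D})\cong \SiHI(L)$. The same argument applied to $\mathbf{R}$ shows that it preserves $\mathcal{F}^\bullet$, so it induces chain endomorphisms $R_r$ on every page commuting with $d_r$. Because $r_{vv}=r_v$, the map $R_1$ is the direct sum $\bigoplus_v \muu(S^1\times c)|_{\SiHI(L_v)}$, and its generalized eigenvalues are integers by Lemma \ref{lem_range_of_eigenvalues}. This yields a decomposition
\[
E_1=\bigoplus_{i\in\bZ} E_1(i), \qquad E_1(i)=\bigoplus_v \SiHI_c(L_v,i),
\]
and since $R_1$ and $d_1$ commute, each $E_1(i)$ is a subcomplex of $(E_1,d_1)$.

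It remains to show by induction on $r$ that $E_r=\bigoplus_i E_r(i)$ with $E_r(i)$ the generalized $i$-eigenspace of $R_r$. If $R_r-iI$ is nilpotent on $E_r(i)$, then the induced map on $H_\ast(E_r(i),d_r|_{E_r(i)})$ remains nilpotent; hence $H_\ast(E_r(i),d_r|_{E_r(i)})\subseteq E_{r+1}(i)$, and a dimension count in the decomposition $E_{r+1}=\bigoplus_i H_\ast(E_r(i),d_r|_{E_r(i)})$ forces equality. Since $\muu(S^1\times c)$ preserves the induced filtration on $\SiHI(L)$ (as $\mathbf{R}$ preserves $\mathcal{F}^\bullet$), the decomposition $\SiHI(L)=\bigoplus_i \SiHI_c(L,i)$ is compatible with the filtration, and the sub-spectral-sequence $\{E_r(i),d_r\}$ therefore converges to $\SiHI_c(L,i)$. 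The only step beyond the formal construction is the persistence of the generalized eigenspace decomposition across pages, which is the main subtlety to verify.
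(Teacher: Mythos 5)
Your proposal is correct and follows essentially the same route as the paper, which simply filters the cube $\mathbf{C}$ of Proposition \ref{prop_KM_spectral_seq} by the $l^1$--norm; the persistence of the generalized eigenspace decomposition of $\mathbf{R}$ across the pages, which you verify by induction, is exactly the point the paper leaves implicit. One small correction: since the paper's maps $f_{vu}\colon C_v\to C_u$ require $v\ge u$, the total differential \emph{decreases} $\|v\|_1$, so your filtration $\mathcal{F}^p=\bigoplus_{\|v\|_1\ge p}C_v$ is not preserved as you claim; filtering instead by $\bigoplus_{\|v\|_1\le p}C_v$ (or re-indexing) fixes this, and nothing else in your argument changes.
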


\subsection{Differentials on the $E_1$--page}
In order to find the $E_2$--page of the spectral sequence given by Lemma \ref{lemma_spectral_sequence_E1}, we need to calculate the differentials on the 
$E_1$--page. Our strategy is to use the isomorphisms in Section \ref{sec_canonical_isomorphisms} to reduce the computation to a known result on the annular instanton Floer homology from \cite{AHI}.

\begin{Proposition}\label{prop_mod_2_naturality_excision_cobordism}
Let $c$ be an essential simple closed curve on $\Sigma$, let $N(c)\subset \Sigma$ be a tubular neighborhood of $c$, let $L$ be a link embedded in $\{0\}\times N(c)\subset (-1,1)\times \Sigma$. Suppose $L$ has $n$ contractible components and $m$ essential components. Fix an order of the contractible components of $L$.

Let $c'\subset\Sigma$ be an essential simple closed curve isotopic to $\tau([c])$, where $\tau$ is the map given by Definition \ref{def_tau_orientation}.
 Let $\Theta_L^{\AHI}$ be the map given by \eqref{eqn_def_Theta_AHI} when viewing $L$ as an annular link in $\{0\}\times N(c)$, where $N(c)$ is identified with the standard annulus using the framing given by $c'$.
  
Let $\Phi_{c,c'},\Phi,\Theta_L$ be the isomorphisms defined in Section \ref{sec_canonical_isomorphisms}.
We have
\begin{equation}
\label{eqn_natruality_SiHI_AHI_Phi_Theta}
	   \Phi_{c,c'}\circ \Theta_L =\pm \Theta_L^{\AHI} \circ \Big(\id_{\otimes_{i=1}^n V_i}\otimes \big(\Phi^{\otimes m}\big)\Big) 
\end{equation}
as maps from $\big(\otimes_{i=1}^n V_i\big)\otimes \SiHI(\{0\}\times c)^{\otimes m}$ to $\AHI(L)$.
\end{Proposition}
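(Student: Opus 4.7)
The plan is to interpret both sides of \eqref{eqn_natruality_SiHI_AHI_Phi_Theta} as the Floer cobordism map of a single 4-dimensional cobordism, built from elementary excision pieces and product pieces, and then to argue by functoriality that the two compositions agree up to sign.

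First I would reduce to the case $n = 0$. The disk cobordisms $E_i^\pm$ and their torus connect sums, which define the maps $F$ and $F^{\AHI}$ in the constructions of $\Theta_L$ and $\Theta_L^{\AHI}$, are supported in small 4-dimensional balls in $[-1,1]\times(-1,1)\times\Sigma$ and in $[-1,1]\times(-1,1)\times A$ respectively. These balls can be arranged to be disjoint from all tori used in the excision cobordisms defining $\Phi_{c,c'}$, $\Phi$, $\Psi_\gamma$, and $\Psi^{\AHI}$. By the functoriality of $\SiHI$ and $\AHI$, the disk cobordism maps then commute with every excision cobordism map, which reduces \eqref{eqn_natruality_SiHI_AHI_Phi_Theta} to the identity
\begin{equation}
\label{eqn_proposal_reduced_claim}
\Psi^{\AHI}\circ\Phi_{c,c'} = \pm\, \Phi^{\otimes m}\circ\Psi_\gamma\colon \SiHI(K_m)\longrightarrow \AHI(\mathcal{K}_1)^{\otimes m}.
\end{equation}

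Next I would show that both sides of \eqref{eqn_proposal_reduced_claim} are induced by the same composite 4-dimensional cobordism. The left-hand side is defined by first excising $S^1\times\Sigma$ against $\hat Y$ along the pair $(T,\hat T)$, where $T\subset S^1\times(\Sigma\setminus N(c))$ has homology class $[S^1\times c]+[\Sigma]$ and $\hat T=S^1\times\hat c\subset\hat Y$, and then performing the further excisions of $\Psi^{\AHI}$ that separate the resulting annular link $\mathcal{K}_m$ into $m$ copies of $\mathcal{K}_1$. The right-hand side is defined by first excising $S^1\times\Sigma$ along the tori $T_1,\ldots,T_m$ of $\Psi_\gamma$ (each isotopic to $T$ and meeting $\{0\}\times\Sigma$ in a distinct parallel copy of $\gamma$) to split $K_m$ into $m$ copies of $K_1$ in $m$ separate $S^1\times\Sigma$'s, and then performing $m$ individual $\Phi$-excisions, one for each copy of $K_1$, each matching a torus parallel to $T$ against a separate copy of $\hat T$. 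In both orderings, the excision attachments occur along a common family of pairwise disjoint tori with matching gluing data, determined by the framing $\tau([c])$ and the orientation $\mathfrak{o}([c])$ of Definition \ref{def_tau_orientation}. Consequently the two composite 4-manifolds agree up to a diffeomorphism that is the identity on the initial and final boundaries, and by the functoriality of $\SiHI$ together with Proposition \ref{prop_homotopy_invariance_inst}, the two induced Floer maps agree up to an overall sign.

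The main obstacle is the topological bookkeeping in the second step: one must verify that all the excision tori appearing on the two sides can simultaneously be chosen disjoint and compatibly framed, and that the auxiliary copies of $\hat Y$ together with the attaching pieces $T^2\times[-1,1]^2$ can be rearranged between the two orderings without altering the cobordism up to a boundary-preserving diffeomorphism. The uniform choices of framing $\tau$ and orientation $\mathfrak{o}$ in Definitions \ref{def_tau_orientation} and \ref{def_Phi} are precisely what make this rearrangement possible; they ensure that the two compositions in \eqref{eqn_proposal_reduced_claim} correspond to a single, well-defined 4-dimensional cobordism, so the equality of Floer maps up to sign is a formal consequence.
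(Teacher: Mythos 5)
Your proposal is correct and follows essentially the same route as the paper, whose proof is simply the observation that the compositions of cobordisms inducing the two sides of \eqref{eqn_natruality_SiHI_AHI_Phi_Theta} are diffeomorphic rel boundary, so the identity up to sign follows from functoriality of the instanton Floer cobordism maps. Your reduction to $n=0$ and the explicit matching of the excision tori is a reasonable fleshing-out of that one-line argument rather than a different approach.
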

\begin{proof}
The compositions of the cobordisms which induce the maps on the two sides of \eqref{eqn_natruality_SiHI_AHI_Phi_Theta} are the diffeomorphic, therefore the result follows 
from the functoriality of instanton Floer homology. 
\end{proof}

For a given essential simple closed curve $c\subset \Sigma$, recall that we use $K_m\subset \{0\}\times \Sigma$ to denote the disjoint union of $m$ parallel copies of $c$, and we view $K_m$ as a link in $(-1,1)\times \Sigma$. We also use $U_n$ to denote an $n$--component unlink in $(-1,1)\times \Sigma-K_m$.

We introduce six (oriented) cobordisms for links in $(-1,1)\times \Sigma$.
\begin{itemize}
\item 
Let $S_1\subset [-1,1]\times (-1,1)\times \Sigma$ be the standard pair-of-pants cobordism from $U_2$ to $U_1$,
and $\bar{S}_1$ be the standard pair-of-pants cobordism from $U_1$ to $U_2$. 

\item
Let $S_2\subset [-1,1]\times (-1,1)\times \Sigma$ be the standard pair-of-pants cobordism from $U_1\sqcup K_1$ to 
$K_1$,
and $\bar{S}_2$ be the standard pair-of-pants cobordism from $K_1$ to $U_1\sqcup K_1$.

\item 
Let $S_3\subset [-1,1]\times (-1,1)\times \Sigma$ be the standard pair-of-pants cobordism from  $K_2$ to 
$U_1$
and $\bar{S}_3$ be the standard pair-of-pants cobordism from $U_1$ to $K_2$. 

\end{itemize}
Notice that in the last two cobordisms, the two parallel components of $K_2$ are oriented oppositely.
All these cobordisms can be included in $[-1,1]\times (-1,1)\times N(K_1)\cong [-1,1]\times S^1\times D^2$. Moreover, the signs of the cobordism maps induced by $S_i$ and $\bar{S}_i$ can be fixed as in Remark \ref{rmk_sign_link_cobordism}.

Recall that $\mathcal{U}_1$ denotes the trivial annular knot, and $\mathcal{K}_1$ denotes the annular knot given by the closure of a 1-braid embedded in $\{0\}\times A$.
In the next proposition, we will view $U_n\cup K_m$ as an annular link in $(-1,1)\times N(c)$, and use the isomorphism $\Theta^{\AHI}$ to identify $\AHI(U_n\cup K_m)$ with $\AHI(\mathcal{U}_1)^n\otimes \AHI(\mathcal{K}_1)^m$. 

Also recall that by  \eqref{eqn_basis_AHI_U_1}, we have  
$$
\AHI(\mathcal{U}_1)=\bC \bfv_+\oplus \bC \bfv_-.
$$

\begin{Proposition}[{\cite[Proposition 5.9, Proposition 5.14]{AHI}}]\label{prop_AHI_differential}
For an appropriate choice of the sign of $\Theta^{\AHI}$, we have
\begin{align*}
&\AHI(S_1)(\bfv_+\otimes \bfv_\pm)=\bfv_\pm,  \\
&\AHI(S_1)(\bfv_\pm\otimes \bfv_+)=\bfv_\pm,  \\
&\AHI(S_1)(\bfv_\pm\otimes \bfv_-)=0,  \\
&\AHI(\bar{S}_1)(\bfv_+)       =\bfv_+\otimes \bfv_- +  \bfv_-\otimes \bfv_+, \\
&\AHI(\bar{S}_1)(\bfv_-)       =\bfv_-\otimes \bfv_-.
\end{align*}
Moreover,
there are generators $\bfw_\pm$ of $\AHI(\mathcal{K}_1)$ with f-degree $\pm 1$ such that
\begin{align*}
&\AHI(S_2)(\bfv_+\otimes \bfw_\pm)=\bfw_\pm,  \\
&\AHI(S_2)(\bfv_-\otimes \bfw_\pm)=0,  \\
&\AHI(\bar{S}_2)(\bfw_\pm)       =\bfv_-\otimes \bfw_\pm, \\ 
&\AHI({S}_3)(\bfw_+\otimes \bfw_+)=0,              \\
&\AHI({S}_3)(\bfw_-\otimes \bfw_-)=0,       \\
&\AHI({S}_3)(\bfw_+\otimes \bfw_-)=\bfv_-,   \\
&\AHI({S}_3)(\bfw_-\otimes \bfw_+)=\bfv_-,    \\
&\AHI(\bar{S}_3)(\bfv_+)=\bfw_+\otimes \bfw_- + \bfw_-\otimes \bfw_+, \\
&\AHI(\bar{S}_3)(\bfv_-)=0.
\end{align*}
\end{Proposition}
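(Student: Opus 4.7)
The plan is to reduce the entire proposition to the analogous computations in the authors' earlier work on annular instanton Floer homology, namely \cite[Proposition 5.9]{AHI} (for the pair-of-pants cobordisms $S_1, \bar{S}_1$ involving only trivial circles) and \cite[Proposition 5.14]{AHI} (for $S_2, \bar{S}_2, S_3, \bar{S}_3$ involving the essential circles). The key observation is purely geometric: all six cobordisms were chosen to be supported inside $[-1,1]\times(-1,1)\times N(K_1)$, which is diffeomorphic to $[-1,1]\times S^1\times D^2$, exactly the ambient manifold used to define $\AHI$. Under this identification, together with the framing of $N(c)$ given by the dual curve $c'$, the link $U_n\cup K_m$ is taken to the annular link $\mathcal{U}_n\cup \mathcal{K}_m$ appearing in \cite{AHI}, and the cobordisms $S_i,\bar{S}_i$ are carried to the corresponding annular cobordisms used in the AHI computations.

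Next I would verify that the two sets of generators match. The elements $\bfv_\pm\in\AHI(\mathcal{U}_1)$ were defined via a disk cobordism $E^+$ from $\emptyset$ and its genus-one stabilization $E^-$, applied to the distinguished generator $\bfu_0\in \AHI(\emptyset)$; this is the same recipe used in AHI to build the generators in their Proposition 5.9, so these are literally the same elements. The generators $\bfw_\pm\in \AHI(\mathcal{K}_1)$ are simply imported from \cite{AHI}, where they are constructed as the eigenvectors of the $f$-grading operator of eigenvalues $\pm 1$, together with normalizations fixed by their images under the split cobordism $\bar S_2$.

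Once generators are matched, the formulas follow by direct application of functoriality of singular instanton Floer homology: the cobordism maps $\AHI(S_i)$ and $\AHI(\bar S_i)$ computed in the AHI paper coincide with the maps in the present statement because they arise from identical cobordisms. The sign ambiguity in $\Theta^{\AHI}$ —which came from the sign ambiguity of the excision/Alexander map $\Psi^{\AHI}$ used in its construction— is absorbed by choosing the sign of $\Theta^{\AHI}$ so that one of the normalizations, say $\AHI(S_1)(\bfv_+\otimes \bfv_+)=\bfv_+$, holds on the nose; the remaining identities then follow automatically from \cite[Propositions 5.9, 5.14]{AHI}.

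The only point requiring a bit of care is bookkeeping the ``appropriate choice of sign'' of $\Theta^{\AHI}$ uniformly across all six cobordisms at once. I expect this to be the main (and essentially only) obstacle, but it is a routine verification: once the sign is fixed using $S_1$, the pair-of-pants relations $\AHI(S_1)\circ\AHI(\bar S_1)=\mu(\pt)$-type identities and the interaction between $\AHI(S_2)$ and $\AHI(\bar S_2)$ rigidly determine the signs of all other cobordism maps in terms of the chosen one, matching the formulas displayed in the statement.
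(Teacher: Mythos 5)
Your proposal is correct and matches the paper's treatment: the paper offers no independent argument for this proposition, quoting it directly from \cite[Propositions 5.9 and 5.14]{AHI}, and your reduction — identifying the six cobordisms as the standard annular ones supported in $[-1,1]\times S^1\times D^2$, matching the generators $\bfv_\pm$, $\bfw_\pm$ with those of \cite{AHI}, and absorbing the remaining ambiguity into the choice of sign of $\Theta^{\AHI}$ — is exactly the justification implicit in that citation.
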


Let $\bfw_\pm$ be the generators of $\AHI(\mathcal{K}_1)$ given by Proposition \ref{prop_AHI_differential}.
Using the isomorphism $\Phi$ from Definition \ref{def_Phi}, we obtain generators $\Phi^{-1}(\bfw_\pm)$ of $\SiHI(K_1)$. By abuse of notation, we will use $\bfw_\pm$ to denote $\Phi^{-1}(\bfw_\pm)$ when there is no risk of confusion.

Suppose $c'$ is isotopic to $\tau([c])$ where $\tau$ is the map given by Definition \ref{def_tau_orientation}, and orient $c$ using the orientation system $\mathfrak{o}$ in Definition \ref{def_tau_orientation}, orient $c'$ such that $c\cdot c'=1$. Then the $c'$--grading of $\bfw_\pm$ is given by $\pm 1$. 

In the next proposition, we use the map $\Theta_L$ 
to identify $\SiHI(U_n\cup K_m)$ with $\SiHI(U_1)^{\otimes n}\otimes \SiHI(K_1)^{\otimes m}$.

\begin{Proposition}\label{prop_differeltial_pair_pants}
For $i=1,2,3$, there exist $h_i,h_i'\in\{1,-1\}$ such that $h_i\SiHI(S_i)$ and $h_i' \SiHI(\bar{S}_i)$ are given by the same formulas for $\AHI(S_i)$ and $\AHI(\bar{S}_i)$ as in
Proposition \ref{prop_AHI_differential}.
\end{Proposition}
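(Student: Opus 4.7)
The plan is to deduce the formulas for $\SiHI(S_i)$ and $\SiHI(\bar S_i)$ from the annular formulas in Proposition \ref{prop_AHI_differential} via the naturality isomorphism of Proposition \ref{prop_mod_2_naturality_excision_cobordism}. The key observation is that each of the six cobordisms is contained in $[-1,1]\times(-1,1)\times N(c)$, which sits inside a thickened annulus. Since the excision cobordism which produces $\Phi_{c,c'}$ is performed on the complement of $(-1,1)\times N(c)$, it can be glued with each $S_i$ (or $\bar S_i$) without alteration, and the functoriality of singular instanton Floer homology gives a commutative square up to an overall sign.

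I would first fix the data: orient $c$ and $c'$ as in Definition \ref{def_tau_orientation} so that $c\cdot c'=1$, identify $N(c)$ with the standard annulus $A$ via the framing given by $c'$, and regard each $S_i$, $\bar S_i$ as a cobordism between annular links. Next, I would apply Proposition \ref{prop_mod_2_naturality_excision_cobordism} at both ends of the cobordism: for every pair of links $L$, $L'$ appearing as the incoming/outgoing boundaries of $S_i$ or $\bar S_i$, we have diagrams of the form
\begin{equation*}
\begin{CD}
\big(\otimes V_j\big)\otimes \SiHI(K_1)^{\otimes m} @>{\Theta_L}>> \SiHI(L) @>{\Phi_{c,c'}}>> \AHI(L) \\
@V{\id\otimes\Phi^{\otimes m}}VV @. @V{\Theta_L^{\AHI}}VV
\end{CD}
\end{equation*}
which commute up to sign by \eqref{eqn_natruality_SiHI_AHI_Phi_Theta}. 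Stacking two such squares (one for each end) with the square induced by the cobordism $S_i$ itself, the resulting diagram commutes up to a sign because all three squares do: the outer two by Proposition \ref{prop_mod_2_naturality_excision_cobordism}, and the middle square by the functoriality of cobordism maps applied to the closed-up excision cobordism, which is diffeomorphic regardless of whether one performs the excision before or after $S_i$.

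Using the identification $\Phi^{-1}(\bfw_\pm)\eqqcolon\bfw_\pm$ in $\SiHI(K_1)$, together with Lemma \ref{lem_naturality_v_+-} which says that $\Phi$ intertwines the bases $\bfv_\pm$ on both sides up to a common sign, the maps $\Theta_L\circ\big(\id\otimes\Phi^{\otimes m}\big)^{-1}\circ(\Theta_L^{\AHI})^{-1}\circ\Phi_{c,c'}$ become identities up to sign on each tensor factor. Therefore the formulas of Proposition \ref{prop_AHI_differential} transport to $\SiHI(S_i)$ and $\SiHI(\bar S_i)$ up to global signs $h_i,h_i'\in\{\pm 1\}$, which is precisely the conclusion.

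The main subtlety is bookkeeping of signs: each of $\Phi_{c,c'}$, $\Theta_L$, $\Theta_L^{\AHI}$, $\Phi$, $\bfu_0$, and each cobordism map is only defined up to $\pm 1$, and Proposition \ref{prop_mod_2_naturality_excision_cobordism} itself is stated up to sign. The saving point is that all of these ambiguities can be absorbed into the two constants $h_i,h_i'$ declared in the statement; one does not need to track them individually. The only check that truly needs the geometry is that the composition of cobordisms on the two sides of the naturality square is diffeomorphic (rel boundary), and this is immediate because all the cobordisms in question are supported in $[-1,1]\times(-1,1)\times N(c)$ while the excision cobordism is supported away from this region.
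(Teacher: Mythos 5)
Your proposal is correct and follows essentially the same route as the paper: both deduce the formulas from Proposition \ref{prop_AHI_differential} by using functoriality to get $\Phi_{c,c'}\circ\SiHI(S_i)=\pm\AHI(S_i)\circ\Phi_{c,c'}$ (since the excision region is disjoint from the support of $S_i$, $\bar S_i$), and then transport bases via Lemma \ref{lem_naturality_v_+-} and Proposition \ref{prop_mod_2_naturality_excision_cobordism}, absorbing all sign ambiguities into $h_i,h_i'$. Your write-up is just a more explicit unpacking of the paper's two-line argument.
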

\begin{proof}
Let $c$ be an essential curve on $\Sigma$ such that $(-1,1)\times \Sigma$ is contained in $(-1,1)\times N(c)$, let $c'$ be an essential curve isotopic to $\tau([c])$ that intersects $c$ positively at one point.
By the functoriality of instanton Floer homology, we have
$$
\Phi_{c,c'}\circ \SiHI(S_1)=\pm  \AHI(S_1)\circ \Phi_{c,c'}.
$$
Therefore, the conclusion follows from Lemma \ref{lem_naturality_v_+-} and Proposition \ref{prop_mod_2_naturality_excision_cobordism}.
\end{proof}

Now let $L$ be an arbitrary link in $(-1,1)\times \Sigma$ that has a diagram with $d$ crossings. Let $v\in \{0,1\}^d$, and let $L_v\subset\{0\}\times  \Sigma$ be the corresponding resolved diagram of $L$. 
Let $c_v\subset \Sigma$ be an essential simple closed curve that is parallel to all the essential components of $L_v$, then one can define the map $\Theta_{L_v}$ using \eqref{eqn_def_Theta_L} by taking $c=c_v$. The isotopy class of $c_v$ is unique if $L_v$ contains at least one essential component.

\begin{Lemma}\label{lemma_spectral_sequence_d1}
For each $v\in \{0,1\}^d$, identify $\SiHI(L_v)$ with $CKh_v(\bar{L})$ using the map $\Theta_{L_v}$ and the decomposition $\SiHI(K_1)=\bC\bfw_{+}\oplus \bC\bfw_-$.
Then up to signs, the differentials on the $E_1$--page of the spectral sequence in Lemma \ref{lemma_spectral_sequence_E1}
coincide with the differentials
of the component of $CKh_v(\bar{L};\bC)$ at $c$--degree $i$.
\end{Lemma}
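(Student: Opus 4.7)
The plan is to verify, edge by edge in the cube $\{0,1\}^d$, that the maps $\pm\SiHI(S_{vu})$ appearing on the $E_1$-page of Lemma \ref{lemma_spectral_sequence_E1} agree, after the identifications by $\Theta_{L_v}$ and by the decomposition $\SiHI(K_1)=\bC\bfw_+\oplus\bC\bfw_-$, with the Asaeda-Przytycki-Sikora merge/split formulas listed in Section \ref{sec_intro}, and then to restrict to the $c$-graded component of weight $i$.

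First, I fix an edge $u<v$ in $\{0,1\}^d$ with $\|v-u\|_1=1$. The cobordism $S_{vu}$ is a product on all circles of $L_v$ disjoint from the crossing where $v$ and $u$ disagree, and is a pair-of-pants on the one or two circles that meet the crossing. Because any two disjoint essential simple closed curves on $T^2$ are isotopic, the essential components of $L_v$ and of $L_u$ are mutually parallel to a single curve $c_v$, which may be used simultaneously to define $\Theta_{L_v}$ and $\Theta_{L_u}$. By the naturality of $\Theta$ under product concordances (Lemma \ref{lem_canonical_decomposition_SiHI}), the conjugate $\Theta_{L_u}^{-1}\circ\SiHI(S_{vu})\circ\Theta_{L_v}$ acts as the identity on tensor factors corresponding to the unaffected circles, and on the remaining one or two factors acts by one of the six model pair-of-pants maps of Proposition \ref{prop_differeltial_pair_pants}: $S_1,\bar{S}_1$ cover the trivial--trivial cases, $S_2,\bar{S}_2$ the mixed cases, and $S_3,\bar{S}_3$ the two-parallel-essential-to-trivial case and its reverse.

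Next, I invoke Proposition \ref{prop_differeltial_pair_pants} to read off the model maps in the bases $\{\bfv_\pm\}$ and $\{\bfw_\pm\}$; the formulas obtained match exactly those defining the APS differential in Section \ref{sec_intro}, up to an overall sign per edge. Restricting to the $\muu(S^1\times c)$-weight $i$ component on both sides --- using that $\bfv_\pm$ has $c$-degree $0$ and that $\bfw_\pm$ on an essential factor has $c$-degree $\pm([c_v]\cdot[c])$, in agreement with the grading of $\mathbf{v}(\gamma)_\pm$ --- then yields the stated coincidence of differentials at $c$-degree $i$.

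The principal technical hurdle will be sign bookkeeping. The maps $\SiHI(S_{vu})$, the isomorphisms $\Theta_{L_v}$ (which inherit sign ambiguities from the excision maps $\Psi_\gamma$), and the model pair-of-pants maps of Proposition \ref{prop_differeltial_pair_pants} are each defined only up to an overall sign, so I must check that these ambiguities can be absorbed consistently into the per-edge signs already permitted by Proposition \ref{prop_KM_spectral_seq}. Since the statement of the lemma only asserts agreement up to signs, and the downstream applications in Theorem \ref{thm_disjoint_from_annulus_intro} and Theorem \ref{thm_main_detection_result} work over $\bZ/2$, this ambiguity is harmless for the subsequent arguments.
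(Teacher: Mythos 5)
Your proposal follows essentially the same route as the paper's proof: choose a common curve $c_v=c_u$ for both resolutions, use the naturality of $\Theta$ under concordance/homotopy (Lemma \ref{lem_canonical_decomposition_SiHI}, which rests on Proposition \ref{prop_homotopy_invariance_inst}) to reduce each edge map to a product cobordism tensored with one of the six model pair-of-pants cobordisms, and then read off the formulas from Proposition \ref{prop_differeltial_pair_pants}, with all sign ambiguities absorbed into the ``up to signs'' statement. This matches the paper's argument, so no further comparison is needed.
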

\begin{proof}
Suppose $v\ge u$ and $\|v-u\|_1$. If $L_v$ contains at least one essential component, then all the essential components of $L_u$ are parallel to $c_v$. If $L_v$ only contains trivial components, then the map $\Theta_{L_v}$ is independent of the choice of $c_v$. Therefore, we may assume without loss of generality that $c=c_u=c_v$ in the definitions of $\Theta_{L_v}$ and $\Theta_{L_u}$.

Suppose $L_v \cong U_{n}\cup K_{m}$ and $L_v  \cong U_{n'}\cup K_{m'}$. For $i\in\{1,2,3\}$, let $S_i$ and $\bar{S}_i$ be the link cobordisms defined above.
Then the cobordism $S_{vu}$ is the union of a copy of $S_i$ or $\bar{S}_i$ and a product cobordism. 
By Proposition \ref{prop_homotopy_invariance_inst} and Lemma \ref{lem_canonical_decomposition_SiHI},  we may assume without loss of generality that there is a tubular neighborhood $N(c)$ of $c$ on $\Sigma$, such that $S-\big([-1,1]\times (-1,1)\times N(c)\big)$ is the trival cobordism, and $S_{uv}\cap [-1,1]\times (-1,1)\times N(c)$ is given by $S_i$ or $\bar{S}_i$. 
The conclusion then follows from 
Proposition \ref{prop_differeltial_pair_pants} and the functoriality of instanton Floer homology.
\end{proof}

\begin{remark}
	It is not clear to the authors whether the signs of the differentials on the $E_1$-page agree with the differentials of $\CKh(\bar{L};\bC)$ after a global conjugation. The answer to this question might require
carefully fixing the signs of the maps defined in Section \ref{sec_canonical_isomorphisms}. Lemma \ref{lemma_spectral_sequence_d1} gives a weaker result that is sufficient for our purpose.
\end{remark}

\begin{proof}[Proof of Theorem \ref{theorem_spectral_sequence}]
Although $(E_1,d_1)$ is defined over $\bC$, Lemma \ref{lemma_spectral_sequence_d1} implies that there is a chain complex $(E_1',d_1')$ over $\bZ$ such that
$$
(E_1,d_1)=(E_1',d_1')\otimes_{\bZ}\bC.
$$
 Let $\beta_{\bZ/2}$ be the rank of the homology of $(E_1',d_1')\otimes_{\bZ}\bZ/2$ over $\bZ/2$, and let $\beta_{\bC}$ be the dimension of the homology of $(E_1',d_1')\otimes_{\bZ}\bC = (E_1,d_1)$ over $\bC$. By the universal coefficient theorem, we have
$$
\beta_{\bZ/2}\ge \beta_{\bC}.
$$
By Lemma \ref{lemma_spectral_sequence_d1}, 
$$
(E_1',d_1')\otimes_{\bZ}\bZ/2\cong \big(CKh_c(\bar{L},i;\bZ/2),d\big),
$$
therefore
$$
\beta_{\bZ/2} = \rank_{\bZ/2} \SiKh_c(\bar{L},i;\bZ/2) = \rank_{\bZ/2} \SiKh_c({L},i;\bZ/2).
$$
The spectral sequence in Lemma \ref{lemma_spectral_sequence_E1} implies
$$
\beta_{\bC} \ge \dim_\bC \SiHI_c(L,i),
$$
and hence the theorem is proved.
\end{proof}

\bibliographystyle{amsalpha}
\bibliography{references}

\end{document}